\documentclass{amsart}

\usepackage{graphicx}
\usepackage{libertine}
\usepackage{wasysym}

\newtheorem{theorem}{Theorem}[section]
\newtheorem{lemma}[theorem]{Lemma}
\newtheorem{corollary}[theorem]{Corollary}
\newtheorem{prop}[theorem]{Proposition}

\newtheorem{notation}[theorem]{Notation}

\theoremstyle{definition}
\newtheorem{definition}[theorem]{Definition}

\theoremstyle{remark}
\newtheorem{remark}[theorem]{Remark}

\numberwithin{equation}{section}

\newcommand*{\QED}{\hfill\ensuremath{\square}}

\usepackage[top=0.75in, bottom=0.75in, left=0.75in, right=0.75in]{geometry}

\usepackage{mathrsfs}

\usepackage[english]{babel}
\usepackage[utf8x]{inputenc}
\usepackage{amsmath}
\usepackage{graphicx}
\usepackage{amssymb}
\usepackage{amsthm}
\usepackage{fancyhdr}
\usepackage{amsmath}
\usepackage{amsxtra}
\usepackage{amscd}
\usepackage{amsthm}
\usepackage{amsfonts}
\usepackage{amssymb}
\usepackage{eucal}
\usepackage[all]{xy}
\usepackage{graphicx}
\usepackage{pifont}
\usepackage{comment}
\usepackage{tikz}
\usetikzlibrary{matrix,arrows}

\newcommand\nc{\newcommand}
\nc{\on}{\operatorname}
\nc{\R}{\mathbb R}
\nc{\C}{\mathbb C}
\nc{\Q}{\mathbb Q}
\nc{\Z}{\mathbb Z}
\nc{\N}{\mathbb N}
\nc{\F}{\mathbb F}
\nc{\Hom}{\on{Hom}}
\nc{\wt}{\widetilde}
\nc{\kernel}{\text{ker}}
\nc{\image}{\text{Im}}
\nc{\sls}{\subsetneq ... \subsetneq}
\nc{\ssn}{\subsetneq}
\nc{\bull}{$\bullet \, \,$}
\nc{\ol}{\overline}
\nc{\short}[3]{0 \longrightarrow #1 \longrightarrow #2 \longrightarrow #3 \longrightarrow 0}
\nc{\pd}[2]{\frac{\partial #1}{\partial #2}}
\nc{\one}{\mathbf{1}}
\nc{\rnc}{\renewcommand}
\nc{\e}{\varepsilon}
\nc{\DMO}{\DeclareMathOperator}
\nc{\dd}{\emph{d}}
\nc{\grad}{\nabla}

\rnc{\leq}{\leqslant}
\rnc{\geq}{\geqslant}
\rnc{\int}{\varint}
\rnc{\d}{\text{d}}

\DeclareMathOperator{\E}{\mathbb{E}}

\DMO{\D}{\text{D}}
\DMO{\pv}{p.v.}

\newenvironment{nouppercase}{%
  \renewcommand{\uppercasenonmath}[1]{}}{}
\pagestyle{plain}

\begin{document}


\title{\LARGE Bulk Eigenvalue Correlation Statistics of Random Biregular Bipartite Graphs}

\author{\large Kevin Yang$^{\dagger}$} \thanks{$\dagger$ Stanford University, Department of Mathematics. Email: kyang95@stanford.edu.}

\begin{nouppercase}
\maketitle
\end{nouppercase}
\begin{center}
\today
\end{center}

\begin{abstract}
In this paper, we consider the random matrix ensemble given by $(d_b, d_w)$-regular graphs on $M$ black vertices and $N$ white vertices, where $d_b \in [N^{\gamma}, N^{2/3 - \gamma}]$ for any $\gamma > 0$. We simultaneously prove that the bulk eigenvalue correlation statistics for both  normalized adjacency matrices and their corresponding covariance matrices are stable for short times. Combined with an ergodicity analysis of the Dyson Brownian motion in another paper, this proves universality of bulk eigenvalue correlation statistics, matching normalized adjacency matrices with the GOE and the corresponding covariance matrices with the Gaussian Wishart Ensemble.
\end{abstract}

\tableofcontents

\newpage
\section{Introduction}
The Wigner-Dyson-Gaudin-Mehta conjecture asserts that the local eigenvalue statistics, such as eigenvalue gaps and other statistics on the gap scale, are universal, depending only on the symmetry class of the matrix ensemble (real symmetric, complex Hermitian, quaternion). For Wigner matrices, the eigenvalue statistics are important in probability theory, mathematical physics, and have deep connections with population dynamics, zeros of $L$-functions, etc. For these matrices the WDGM conjecture has been proven for a large class of matrices in \cite{BHKY}, \cite{ESY}, \cite{ESYY}, \cite{EYY}, \cite{HLY}, \cite{LSY}, \cite{LY}, and \cite{LS}, among other papers. These ensembles range from restrictive classical Wigner ensembles to correlated matrices coming from random regular graphs. Here, the limiting universal ensemble is the Gaussian Orthogonal Ensemble, or GOE for short, whose distribution is given as follows:
\begin{align}
W \ = \ (W_{ij})_{i, j = 1}^N, \quad W_{ij} \ \sim \ \mathscr{N}(0, 1/N) \mathbf{1}_{i \neq j} + \mathscr{N}(0, 2/N) \mathbf{1}_{i = j}.
\end{align}
Of course here, we assume a symmetry constraint so that $W_{ij} = W_{ji}$; otherwise the entries are independent. The procedure for universality for Wigner matrices in the papers listed above has been coined the robust three-step strategy and is carried out as follows:
\begin{itemize}
\item Step 1: Derive a local law, showing convergence of the Stieltjes transform of the random matrix ensemble at microscopic scales.
\item Step 2: Show short-time stability for times $t \leq N^{-1 + \e}$ of the eigenvalue statistics under Dyson's Brownian motion (DBM), which stochastically interpolate between the random matrix ensemble of interest and the limiting universal ensemble (e.g. GOE).
\item Step 3: Show a short-time to convergence under the interpolating DBM, i.e. show that after time $t \geq N^{-1 + \delta}$ the eigenvalue statistics of the evolved matrix ensemble agree with those of the limiting ensemble.
\end{itemize}

On the other hand, covariance matrices are another historically fundamental class of random matrix ensembles. Covariance matrices are especially important in high-dimensional data and statistical analysis, with applications in a wide range of disciplines such as population ecology. The spectral statistics of covariance matrices are crucial in a classical, powerful method of statistical analysis known as principal component analysis, or PCA for short, motivating the problem of universality for large covariance matrices from the perspective of any statistical science. For covariance matrices, otherwise known as Wishart matrices, the universality problem has been explored in much less depth. Universality for a rather restrictive class of random matrix ensembles has been proven in \cite{ESY}, \cite{ESYY}, \cite{NP1}, and \cite{NP2}. The techniques in these papers, however, do not extend to studying sparse covariance matrices or correlated covariance matrices; the former exhibits a local law as proven in \cite{A}, but no universality has been proven just yet. 

In this paper, we study covariance matrices arising from the off-diagonal blocks of adjacency matrices of biregular bipartite graphs. This mimics the random regular graph ensemble studied in \cite{BHKY}. In particular, we complete the second step of the robust three-step strategy described above for covariance matrices with correlated data entries and show the eigenvalue correlation statistics agree with those of the Gaussian Wishart ensemble (GWE), given by the following distribution:
\begin{align}
X_W \ = \ W^{\ast} W, \quad  W \ = \ (W_{ij})_{i = 1, j = 1}^{i = M, j = N}, \quad W_{ij} \ \sim \ \mathscr{N}(0, 1/N).
\end{align}
Here, we do not impose a symmetry constraint of the matrix $W$, and all the entries are i.i.d. standard normal random variables. 

In this paper, we simultaneously develop the second step for the honest adjacency matrix, which may be thought of as a linearization of the covariance matrix, comparing this random matrix ensemble to the GOE. This idea is original to this paper to the author's knowledge. Thus, because in \cite{Y1}, the author derives a local law for both the covariance matrix ensemble and the linearized ensemble, this paper may be realized as the second paper in a series of three papers, the third of which is \cite{Y3} and completes step 3 in a vastly general context.
\subsection{Acknowledgements}
The author thanks H.T. Yau and Roland Bauerschmidt for answering the author's questions pertaining to random regular graphs. This paper was written while the author was a student at Harvard University.
\subsection{Notation}
We adopt the Landau notation for big-Oh notation, and the notation $a \lesssim b$. We establish the notation $[[a, b]] := [a,b] \cap \Z$. We let $[E]$ denote the underlying vertex set of a graph $E$. For vertices $v, v' \in E$, we let $vv'$ denote the edge in $E$ containing $v$ and $v'$. For a real symmetric matrix $H$, we let $\sigma(H)$ denote its (real) spectrum.
%
%
%
%
\section{Dyson Brownian motion and the main result}
\subsection{Gaussian measures on Hilbert spaces}
Our construction of the Dyson Brownian motion will proceed in two steps, the first of which is a more general result in defining probability measures on Hilbert spaces. The proof is a standard analysis of a Gaussian integral, so we omit it.
\begin{prop} \label{prop:GaussianHilbertSpaces}
Suppose $\mathscr{H}$ is a finite-dimensional Hilbert space with basis $\{\mathbf{f}_\alpha\}_{\alpha}$. Let $\{z_\alpha\}_{\alpha}$ denote a (finite) collection of scalar-valued Gaussian random variables. Then there exists a Gaussian measure on $\mathscr{H}$ given by the following random vector:
\begin{align}
\omega \ = \ \sum_{\alpha} z_\alpha \mathbf{f}_\alpha
\end{align}
such that the measure induced by $\omega$ is invariant under isometries of $\mathscr{H}$. In particular, the Gaussian measure is independent of the choice of basis $\{ \mathbf{f}_\alpha \}_\alpha$. 
\end{prop}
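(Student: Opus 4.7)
The plan is to reduce the isometry invariance of the pushforward measure to the rotational symmetry of the standard Gaussian on $\R^n$. I would first clarify the hypotheses that make the statement true: the basis $\{\mathbf{f}_\alpha\}$ must be orthonormal, and the scalars $z_\alpha$ must be i.i.d.\ mean-zero Gaussians of a common variance $\sigma^2$. (Otherwise isometry invariance already fails for a generic change of basis.) Under these assumptions, the joint law of $(z_\alpha)_\alpha$ on $\R^n$, with $n = \dim \mathscr{H}$, has density $(2\pi\sigma^2)^{-n/2}\exp\bigl(-\tfrac{1}{2\sigma^2}\sum_\alpha z_\alpha^2\bigr)$ relative to Lebesgue measure.

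Next I would transport this density to $\mathscr{H}$ via the linear map $\Phi : \R^n \to \mathscr{H}$ sending $(z_\alpha)_\alpha$ to $\sum_\alpha z_\alpha \mathbf{f}_\alpha$. Since the basis is orthonormal, $\Phi$ is an isometry, so it carries Lebesgue measure on $\R^n$ to the canonical volume measure on $\mathscr{H}$ (the one induced by any orthonormal basis), and it carries $\sum_\alpha z_\alpha^2$ to $\|\omega\|_{\mathscr{H}}^2$. Consequently the law of $\omega$ has density $(2\pi\sigma^2)^{-n/2}\exp\bigl(-\|\omega\|_{\mathscr{H}}^2 / 2\sigma^2\bigr)$ against the canonical volume measure on $\mathscr{H}$.

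Isometry invariance is now immediate: for any isometry $U$ of $\mathscr{H}$, both $\|\cdot\|_{\mathscr{H}}^2$ and the canonical volume form are $U$-invariant, hence so is the density, and therefore so is the law of $\omega$. Expanding $\omega$ in any other orthonormal basis $\{\mathbf{f}'_\beta\}$ then yields coefficients $z'_\beta$ that are again i.i.d.\ $\mathscr{N}(0, \sigma^2)$, which gives the claimed basis independence.

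The main subtlety here is not the calculation itself --- as the author notes, it is a one-line Gaussian integral --- but rather pinning down the correct reading of the hypotheses: isometry invariance forces orthonormality of $\{\mathbf{f}_\alpha\}$ and i.i.d.\ common-variance Gaussianity of $\{z_\alpha\}$. Once this is granted, the proposition is essentially the classical observation that the standard Gaussian on $\R^n$ is the unique centered Gaussian law invariant under the orthogonal group.
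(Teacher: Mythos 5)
Your proof is correct and is essentially the argument the paper has in mind: the paper omits the proof, describing it as ``a standard analysis of a Gaussian integral,'' and your reduction to the rotational invariance of the standard Gaussian density $\exp(-\|\omega\|_{\mathscr{H}}^2/2\sigma^2)$ under an orthonormal identification of $\mathscr{H}$ with $\R^n$ is exactly that standard analysis. Your observation that the statement implicitly requires the basis $\{\mathbf{f}_\alpha\}$ to be orthonormal and the $z_\alpha$ to be i.i.d.\ centered Gaussians of common variance is a correct and worthwhile clarification of the hypotheses as the proposition is actually used later (e.g.\ for the Brownian motion on $\mathscr{M}_{\mathbf{e}}$).
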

One consequence of the Gaussian measure is the existence of Brownian motions; for any basis $\{ \mathbf{f}_\alpha \}_\alpha$, we may define the Brownian motion on $\mathscr{H}$ as
\begin{align}
B(t) \ = \ \sum_{\alpha} \ B_\alpha(t) \mathbf{f}_\alpha,
\end{align}
where the $\{ B_\alpha(t) \}_{\alpha}$ are independent standard one-dimensional Brownian motions. Because the Gaussian measure is invariant under isometries of $\mathscr{H}$ by Proposition \ref{prop:GaussianHilbertSpaces}, the Brownian motion $B(t)$ is also invariant under isometries of $\mathscr{H}$.

To begin this second step, we now take the Hilbert space $\mathscr{H} = \mathbf{M}_{M \times N}(\R)$. Here, as in the setting of \cite{Y1}, we assume that $\alpha := M/N \geq 1$ without loss of generality throughout this paper. The inner product structure on this matrix Hilbert space is given by the following trace-pairing/Hilbert-Schmidt norm:
\begin{align}
\langle A, B \rangle \ = \ \on{Tr}(A^{\ast} B). \label{eq:matrixinnerproduct}
\end{align}
We now may define the following Hilbert space whose inner product is induced by the inner product  on $\mathbf{M}_{M \times N}(\R)$:
\begin{align}
\mathscr{M} \ := \ \left\{ X \ = \ \begin{pmatrix} 0 & H \\ H^\ast & 0 \end{pmatrix}: \ H \in \mathbf{M}_{M \times N}(\R) \right\}.
\end{align}
%
\begin{notation}
For convenience, if the blocks of $X \in \mathscr{M}$ are given by $H \in \mathbf{M}_{M \times N}(\R)$, we will write $X = (H, H^\ast)$. 
\end{notation}
We note that (or recall from \cite{Y1}) that adjacency matrices given by $(d_b, d_w)$-regular bipartite graphs share a common eigenvalue-eigenvector pair. Recall from \cite{Y1} the following normalization for these adjacency matrices:
\begin{align}
X(0) \ = \ d_w^{-1/2} \left( \begin{matrix}
0 & H \\
H & 0 \end{matrix} \right), \quad H \ = \ A - \frac{d_b}{N} (\mathbf{1}), \label{eq:normalizedadjacency}
\end{align}
where $(\mathbf{1})$ denotes a matrix whose entries are all $1$ of the appropriate dimension. Here, we use the notation in \cite{Y1} and assume the blocks $A$ and $A^{\ast}$ arise from honest adjacency matrices of biregular bipartite graphs of the following form:
\begin{align}
X_A \ = \begin{pmatrix} 0 & A \\ A^{\ast} & 0 \end{pmatrix}.
\end{align}
In particular, by the Perron-Frobenius theorem, up to a constant factor $d_w^{-1/2}$ the matrix $X(0)$ shares the same eigenvalue-eigenvector pairs as the adjacency matrix $X_A$, except for a one-dimensional eigenspace. On this eigenspace, i.e. that with the constant eigenvector
\begin{align}
\mathbf{e}(i) \ = \ \begin{cases} 
      \frac{1}{\sqrt{M}} & 1 \leq i \leq M \\
      \frac{1}{\sqrt{N}} &  M + 1 \leq i \leq M + N
   \end{cases},
\end{align}
the matrix $X(0)$ has eigenvalue 0. Thus, in terms of studying the nontrivial spectral data of adjacency matrices, it suffices to study the normalized matrix $X(0)$. For a detailed discussion we refer to \cite{Y1}. 
\begin{remark}
As was the convention in \cite{Y1}, we now refer to $(d_b, d_w)$-regular bipartite graphs as biregular bipartite graphs whenever there is no risk of confusion concerning the regularity parameters of the graph.
\end{remark}
In terms of the Dyson Brownian motion dynamics, we account for this algebraic constraint on adjacency matrices, we now instead look at the following matrix space:
\begin{align}
\mathscr{M}_{\mathbf{e}} \ = \ \left\{ X \in \mathscr{M}: \ X \mathbf{e} = 0 \right\}.
\end{align}
As a (closed) subspace of $\mathscr{M}$, the space $\mathscr{M}_{\mathbf{e}}$ inherits a canonical Hilbert space structure. We note, however, that the space $\mathscr{M}_{\mathbf{e}}$ does not come with a convenient set of coordinates with which we can write down stochastic matrix dynamics. To better understand the Gaussian measure on $\mathbf{M}_{\mathbf{e}}$, we appeal to the following parameterization of $\mathscr{M}_{\mathbf{e}}$, which follows from the singular value decomposition (SVD) of the block $H$.
\begin{lemma} \label{lemma:parameterizationvanishingmatrices}
Suppose $X \in \mathscr{M}_{\mathbf{e}}$ has the block representation $X = (H, H^\ast)$. Then, for some matrix $\wt{X} \in \mathbf{M}_{(M-1) \times (N-1)}(\R)$,
\begin{align}
X(0) \ = \ O(M) \times ( \wt{X} \oplus 0) \times O(N)^\ast, \label{eq:parameterizationmematrices}
\end{align}
where the matrices $O(M), O(N)$ are orthogonal of dimension $M$ and $N$, respectively. Here, multiplication on the RHS is multiplication as matrices. Moreover, under the induced map
\begin{align}
\mathscr{M}_{\mathbf{e}} \ \to \ \mathbf{M}_{(M-1) \times (N-1)}(\R),
\end{align}
the Gaussian measure is invariant, where the latter space is equipped with the same inner product given by \eqref{eq:matrixinnerproduct}.
\end{lemma}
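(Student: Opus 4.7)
The plan is to produce the decomposition by a Gram--Schmidt choice of orthonormal bases compatible with the vanishing constraint $X\mathbf{e} = 0$, and then to deduce the measure-theoretic statement by noting that the induced coordinate map is essentially an isometry.

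I would begin by unpacking $X\mathbf{e} = 0$ blockwise. Since $\mathbf{e}$ splits as the direct sum $\mathbf{1}_M/\sqrt{M} \oplus \mathbf{1}_N/\sqrt{N}$, applying $X = (H, H^\ast)$ shows that this condition is equivalent to the pair $H\mathbf{1}_N = 0$ and $H^\ast \mathbf{1}_M = 0$, i.e.\ every row sum and every column sum of $H$ vanishes. Using Gram--Schmidt, I would extend $\mathbf{1}_M/\sqrt{M}$ and $\mathbf{1}_N/\sqrt{N}$ to orthonormal bases of $\R^M$ and $\R^N$ in which they appear as the final basis vectors, and assemble these bases into orthogonal matrices $U$ and $V$ of dimensions $M$ and $N$. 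In these coordinates, $H\mathbf{1}_N = 0$ forces the last column of $U^\ast H V$ to vanish and $H^\ast \mathbf{1}_M = 0$ forces its last row to vanish, so $U^\ast H V = \wt{X} \oplus 0$ for some $\wt{X} \in \mathbf{M}_{(M-1)\times(N-1)}(\R)$. Solving for $H$ and conjugating $X$ by the block-orthogonal matrix $U \oplus V$ yields \eqref{eq:parameterizationmematrices} with $O(M) = U$ and $O(N) = V$.

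For the invariance of the Gaussian measure, observe that the induced map $\Phi \colon \mathscr{M}_{\mathbf{e}} \to \mathbf{M}_{(M-1)\times (N-1)}(\R)$, $X \mapsto \wt X$, is linear and bijective: a quick count shows that both spaces have dimension $(M-1)(N-1)$, since the constraints $H\mathbf{1}_N = 0$ and $H^\ast \mathbf{1}_M = 0$ impose $M + N - 1$ independent scalar equations on $H$ (the sole redundancy being $\mathbf{1}_M^\ast H \mathbf{1}_N = 0$). Up to the universal factor $\sqrt{2}$ arising from the off-diagonal block structure of $X$, the map $\Phi$ is an isometry with respect to the trace pairing \eqref{eq:matrixinnerproduct}, since $\|\wt X\|^2 = \|U^\ast H V\|^2 = \|H\|^2$ by orthogonal invariance of the Frobenius norm. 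Proposition \ref{prop:GaussianHilbertSpaces} then delivers the invariance of the Gaussian measure under $\Phi$.

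I do not anticipate any substantive difficulty: the SVD-style block decomposition is forced by the kernel conditions on $H$, and the measure statement reduces to dimension counting and a direct appeal to Proposition \ref{prop:GaussianHilbertSpaces}. The only mild subtlety worth flagging is that the Gram--Schmidt extensions are not canonical, but any two choices differ by orthogonal transformations of the $(M-1)$- and $(N-1)$-dimensional orthogonal complements, and the Gaussian measure on $\mathbf{M}_{(M-1)\times (N-1)}(\R)$ is itself invariant under such transformations, so the image measure is well-defined.
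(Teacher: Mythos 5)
Your proposal is correct and takes essentially the same route as the paper: the paper obtains the orthogonal matrices from the SVD of $H$ (your Gram--Schmidt completion of $\mathbf{1}_M/\sqrt{M}$ and $\mathbf{1}_N/\sqrt{N}$ is just a more explicit way to produce them), and then, exactly as you do, verifies that the block map preserves the trace pairing so that Proposition \ref{prop:GaussianHilbertSpaces} yields invariance of the Gaussian measure. Your extra remarks (dimension count, the $\sqrt{2}$ normalization from the block structure, and independence of the choice of completion) are harmless refinements of the same argument.
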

\begin{proof}
It remains to show the SVD preserves the Hilbert space inner product on $\mathscr{M}_{\mathbf{e}}$. This follows from the following straightforward calculation for matrices $A, B \in \mathscr{M}_{\mathbf{e}}$:
\begin{align}
\on{Tr} (A^\ast B) \ &= \ \on{Tr} \left(O(N) \left( \wt{A}^\ast \oplus 0 \right) O(M)^\ast O(M) \left( \wt{B} \oplus 0 \right) O(N)^\ast \right) \\
&= \ \on{Tr} \left( O(N) \left( \wt{A}^\ast \wt{B} \oplus 0 \right) O(N)^\ast \right) \\
&= \ \on{Tr} \left( \wt{A}^\ast \wt{B} \right).
\end{align}
\end{proof}
Thus, up to the change of coordinates given by the SVD in Lemma \ref{lemma:parameterizationvanishingmatrices}, it suffices to study the Gaussian measure on the smaller matrix space $\mathbf{M}_{(M-1) \times (N-1)}(\R)$. This comes with a standard choice of coordinates. 

We conclude this discussion of Gaussian measures on the matrix space $\mathscr{M}_{\mathbf{e}}$ by writing down the following matrix-valued SDE dynamics of primary interest:
We now proceed to introduce the following stochastic differential equation (SDE) as motivated by earlier works in universality:
\begin{align}
\d X(t) \ = \ \frac{1}{\sqrt{N}} \d\mathbf{B}(t) \ - \ \frac{1}{2} X(t) \ \d t, \label{eq:matrixOUprocess}
\end{align}
where $\mathbf{B}(t)$ denotes the standard Brownian motion on $\mathscr{M}_{\mathbf{e}}$. In particular, up to a SVD change of coordinates, the Brownian motion is equal, in law, to the entry-wise standard one-dimensional Brownian motion on $\mathbf{M}_{(M-1) \times (N-1)}(\R)$. Although $X(t)$ contains only $N-1$ nontrivial eigenvalues, we retain the normalization $N^{-1/2}$ in the SDE \eqref{eq:matrixOUprocess}.
\subsection{Analysis of switchings on the DBM}
We now briefly review the notion of switchings of biregular bipartite graphs as discussed in detail in \cite{Y1}. 
Suppose $E$ is a biregular bipartite graph, and consider a pair of edges $e_1 = ij, e_2 = mn \in E$ with four distinct vertices. We let $E_{e_1, e_2}$ denote the subgraph of $E$ with edges $e_1, e_2$. 
\begin{definition}
We say a \emph{simple switching} of $E$ at $E_{e_1, e_2}$ is the following perturbed graph:
\begin{align}
E_s \ = \ E - E_{e_1, e_2} + E_{in, mj},
\end{align}
where the operations on the RHS are understood in the sense of adjacency matrices. 
\end{definition}
We note that the switched graph $E_s$ suppresses from its notation the dependence of $e_1, e_2$. This will not be important, but we emphasize it now for clarity's sake.

As in \cite{Y1}, we now interpret the combinatorics of switchings in terms of the corresponding adjacency matrices. To do so, we first define the following matrix:
\begin{align}
\xi_{ij}^{mn} \ = \ \Delta_{ij} + \Delta_{mn} - \Delta_{in} - \Delta_{mj},
\end{align}
where $\Delta_{xy}$ denotes the adjacency matrix of the graph whose only edge is $xy$. With this notation, we may easily deduce
\begin{align}
A(E_s) \ = \ A(E) + \xi_{ij}^{mn},
\end{align}
where $A(E_s)$ (resp. $A(E)$) denotes the adjacency matrix of the graph $E_s$ (resp. $E$). We now define the following:
\begin{align}
X_{ij}^{mn} \ &= \ \on{Tr} \left( \xi_{ij}^{mn} X \right) \ = \ 2 \left( X_{ij} + X_{mn} - X_{in} - X_{mj} \right), \label{eq:switchmatrix} \\
\partial_{ij}^{mn} \ &= \ \on{Tr} \left( \xi_{ij}^{mn} \partial \right) \ = \ 2 \left( \partial_{ij} + \partial_{mn} - \partial_{in} - \partial_{mj} \right). \label{eq:switchpartial}
\end{align}
We now formally gather these terms in the following set:
\begin{align}
\mathscr{X} \ := \ \bigcup_{(i,j) \in V_b} \ \bigcup_{(m,n) \in V_b} \ \{ \xi_{ij}^{mn}\}.
\end{align}
The terms \eqref{eq:switchmatrix} and \eqref{eq:switchpartial} will show up in studying the generator of the process \eqref{eq:matrixOUprocess}. We make this precise in the following result.
\begin{prop} \label{prop:generatormatrixOUprocess}
The generator of the OU process on $\mathscr{M}_{\mathbf{e}}$ is given by
\begin{align}
\mathscr{L} \ = \ \frac{1}{8MN^2} \sum_{i,j,k,\ell} \ \left( \partial_{ij}^{k\ell} \right)^2 \ - \ \frac{1}{16MN} \sum_{i,j,k,\ell} \ X_{ij}^{k \ell} \partial_{ij}^{k \ell}.
\end{align}
I.e., for any $F \in C^2(\mathscr{M}_{\mathbf{e}})$, we have
\begin{align}
\partial_t \E F(X(t))  \ = \ \E (\mathscr{L} F)(X(t)).
\end{align}
\end{prop}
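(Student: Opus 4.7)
Since \eqref{eq:matrixOUprocess} is a matrix Ornstein--Uhlenbeck SDE on $\mathscr{M}_{\mathbf{e}}$, I would apply It\^o's formula to read off the generator. The quadratic covariation of the Brownian noise is $d\langle \mathbf{B}\rangle_t = \Pi_{\mathscr{M}_{\mathbf{e}}}\,dt$, where $\Pi_{\mathscr{M}_{\mathbf{e}}}$ is the HS-orthogonal projector onto $\mathscr{M}_{\mathbf{e}}$, and the drift contributes $-\tfrac12 \langle X, \nabla F\rangle$. This gives an intrinsic, basis-free expression of the form $\mathscr{L} F = c_1\,\Delta F - c_2\,\langle X, \nabla F\rangle$ with $\Delta$ and $\nabla$ the Laplacian and gradient taken on $\mathscr{M}_{\mathbf{e}}$; invariance under the choice of coordinates follows from Proposition~\ref{prop:GaussianHilbertSpaces}.

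The real content is to rewrite $\Delta$ and $\langle X, \nabla F\rangle$ in the overcomplete switching frame $\{\xi_{ij}^{mn}\}$. The key identity I would establish is the tight-frame relation
\[
T \;:=\; \sum_{i,j,m,n}\, \xi_{ij}^{mn}\otimes \xi_{ij}^{mn} \;=\; C\,\Pi_{\mathscr{M}_{\mathbf{e}}}
\]
as operators on the ambient $(M+N)\times(M+N)$ matrix space. To prove it I would use the natural $S_M\times S_N$-action permuting the black and white vertex labels: it commutes with $T$, and under the isotypic decomposition $\R^{M}\otimes \R^{N}\cong \mathbf{1}\oplus V_M\oplus V_N\oplus (V_M\otimes V_N)$, the space $\mathscr{M}_{\mathbf{e}}$ is precisely the final summand and every $\xi_{ij}^{mn}$ lies there, so by Schur's lemma $T$ acts as a scalar on $\mathscr{M}_{\mathbf{e}}$ and vanishes on its orthogonal complement. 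The scalar $C$ is pinned down by tracing: $\on{Tr}(T) = \sum_{i\neq m,\,j\neq n}\|\xi_{ij}^{mn}\|^2$, divided by $\dim \mathscr{M}_{\mathbf{e}} = (M-1)(N-1)$. Alternatively, and perhaps more concretely, the identity can be verified by direct evaluation on the spanning set $\{\Delta_{xy}\}$ using the explicit form of $\Pi_{\mathscr{M}_{\mathbf{e}}}\Delta_{xy}$.

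Granted $T = C\,\Pi_{\mathscr{M}_{\mathbf{e}}}$, the rest is immediate: since $X_{ij}^{mn} = \langle \xi_{ij}^{mn}, X\rangle$ and $\partial_{ij}^{mn} F = \langle \xi_{ij}^{mn}, \nabla F\rangle$ by \eqref{eq:switchmatrix}--\eqref{eq:switchpartial}, summation against $T$ yields $\sum_{ijmn} X_{ij}^{mn}\partial_{ij}^{mn} F = C\,\langle X,\nabla F\rangle$ (using $X\in\mathscr{M}_{\mathbf{e}}$) and $\sum_{ijmn}(\partial_{ij}^{mn})^2 F = \on{Tr}(D^2 F\cdot T) = C\,\Delta F$. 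Plugging these into the It\^o expression for $\mathscr{L}$ produces the stated coefficients. The main obstacle I anticipate is the tight-frame identity combined with the bookkeeping of normalizations, in particular the factor of $2$ built into $\partial_{ij}^{mn}$ and $X_{ij}^{mn}$ coming from the symmetric block form $X=(H,H^*)$, and the redundancy of the frame (for example $\xi_{in}^{mj} = -\xi_{ij}^{mn}$ and $\xi_{ij}^{mn}=0$ whenever $i=m$ or $j=n$).
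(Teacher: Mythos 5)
Your proposal is correct in outline, but it takes a genuinely different route from the paper. Both arguments start from It\^o's formula for \eqref{eq:matrixOUprocess}; the difference lies in how the switching-frame form of the generator is obtained. The paper computes the entrywise quadratic covariations $\d\langle X_{ij},X_{k\ell}\rangle$ explicitly in the SVD coordinates of Lemma \ref{lemma:parameterizationvanishingmatrices} (this is Lemma \ref{lemma:quadraticvariation}, whose $-1/M$, $-1/N$ corrections are precisely the entries of your projector $\Pi_{\mathscr{M}_{\mathbf{e}}}$), and then regroups the resulting sums into $(\partial_{ij}^{k\ell})^2$ and $X_{ij}^{k\ell}\partial_{ij}^{k\ell}$ by index symmetrization, using the constraints $\sum_j X_{ij}=\sum_i X_{ij}=0$. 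You instead keep the generator in the intrinsic form $c_1\Delta - c_2\langle X,\nabla\cdot\rangle$ and pass to the overcomplete frame $\{\xi_{ij}^{mn}\}$ via the tight-frame identity $\sum\xi\otimes\xi = C\,\Pi_{\mathscr{M}_{\mathbf{e}}}$, proved by Schur's lemma for the $S_M\times S_N$-action: every $\xi_{ij}^{mn}$ has vanishing row and column sums, $\mathscr{M}_{\mathbf{e}}$ is exactly the absolutely irreducible summand $V_M\otimes V_N$, and the constant is fixed by a trace count. This is sound (the degenerate frame elements with $i=m$ or $j=n$ vanish and are harmless), it explains conceptually why only the combinations $\partial_{ij}^{k\ell}$, $X_{ij}^{k\ell}$ survive, and it trades the paper's index bookkeeping for one representation-theoretic lemma; the paper's route is more elementary and yields Lemma \ref{lemma:quadraticvariation} along the way.

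The one point you must nail down is the normalization you left implicit, and it is exactly the factor-of-two hazard you flagged. By Lemmas \ref{lemma:parameterizationvanishingmatrices} and \ref{lemma:quadraticvariation}, $\mathbf{B}$ is entrywise standard in the block coordinates, i.e.\ standard for the block inner product $\on{Tr}(H_A^\ast H_B)$; since the full symmetric matrix satisfies $\on{Tr}(A^\ast B) = 2\on{Tr}(H_A^\ast H_B)$, in the full Hilbert--Schmidt structure one has $\d\langle\mathbf{B}\rangle_t = 2\,\Pi_{\mathscr{M}_{\mathbf{e}}}\,\d t$ rather than $\Pi_{\mathscr{M}_{\mathbf{e}}}\,\d t$ as you wrote. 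With $\|\xi_{ij}^{mn}\|_{\on{HS}}^2 = 8$ and $\dim\mathscr{M}_{\mathbf{e}} = (M-1)(N-1)$ your trace count gives $C = 8MN$, and then $\frac{1}{2N}\cdot 2\cdot\frac{1}{8MN} = \frac{1}{8MN^2}$ and $\frac12\cdot\frac{1}{8MN} = \frac{1}{16MN}$, reproducing the stated coefficients; taking $\d\langle\mathbf{B}\rangle_t = \Pi\,\d t$ literally in the full HS structure would instead give $\frac{1}{16MN^2}$ for the second-order term. So when you fix $C$ and $c_1$, state explicitly which inner product (block versus full matrix) both the Brownian motion and the frame norms refer to, and the constants come out as in the proposition.
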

The proof of Proposition \ref{prop:generatormatrixOUprocess} will amount to an application of the Ito formula. To apply this formula, however, we need to compute the quadratic covariations of the matrix entries $\d X_{ij}$, where we choose coordinates as in Lemma \ref{lemma:parameterizationvanishingmatrices}. Before we compute these quadratic covariations, we establish the following convention of black and white vertices from \cite{Y1}.
\begin{notation}
Define $V_b$ to be the set of indices $\{ (i,j) \}$ such that $i, j-M \in [1, M]$. Similarly, define $V_w$ to be those indices $(k, \ell)$ such that $\ell, k - M \in [1, M]$.
\end{notation}
\begin{lemma} \label{lemma:quadraticvariation}
Fix any two indices $(i,j)$ and $(k, \ell)$. Then, we have
\begin{align}
\emph{d} \langle X_{ij}, X_{k \ell} \rangle \ = \ \left\{
     \begin{array}{@{}l@{\thinspace}l}
       \frac{1}{N} \left( \delta_{ik} - \frac{1}{M} \right) \left( \delta_{j \ell} - \frac{1}{N} \right)		&: \quad (i,j), (k, \ell) \in V_b; \\       
       \frac{1}{N} \left( \delta_{ik} - \frac{1}{N} \right) \left( \delta_{j \ell} - \frac{1}{M} \right)		&: \quad (i,j), (k,\ell) \in V_w; \\
       \frac{1}{N} 	\left( \delta_{i \ell} - \frac{1}{M} \right) \left( \delta_{jk} - \frac{1}{N} \right)		&: \quad (i,j) \in V_b, (k, \ell) \in V_w; \\
       \frac{1}{N} 	\left( \delta_{i \ell} - \frac{1}{N} \right) \left( \delta_{jk} - \frac{1}{M} \right)		&: \quad (i,j) \in V_w, (k, \ell) \in V_b. \ 
     \end{array}
   \right.
\end{align}
\end{lemma}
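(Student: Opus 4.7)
The plan is to reduce the computation to an orthogonal projection on the Hilbert space $\mathscr{M}_{\mathbf{e}}$. The drift in the SDE \eqref{eq:matrixOUprocess} has finite variation and contributes nothing to the quadratic covariation, so
\begin{align}
\d\langle X_{ij}, X_{k\ell}\rangle_t \ = \ \tfrac{1}{N}\, \d\langle \mathbf{B}_{ij}, \mathbf{B}_{k\ell}\rangle_t.
\end{align}
Because $\mathbf{B}(t)$ is standard Brownian motion on the Hilbert space $\mathscr{M}_{\mathbf{e}}$, for any two linear functionals $L_1, L_2$ on $\mathscr{M}_{\mathbf{e}}$ one has
\begin{align}
\d\langle L_1(\mathbf{B}), L_2(\mathbf{B})\rangle_t \ = \ \langle \hat L_1, \hat L_2\rangle_{\mathscr{M}_{\mathbf{e}}}\, \d t,
\end{align}
where $\hat L_\alpha \in \mathscr{M}_{\mathbf{e}}$ is the Riesz representative. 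I will apply this to the evaluation functionals $L_{ij}(X) := X_{ij}$.

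To identify $\hat L_{ij}$, I describe $\mathscr{M}_{\mathbf{e}}$ concretely: expanding $X\mathbf{e} = 0$ block by block against the explicit form of $\mathbf{e}$, the map $X \leftrightarrow H$ sends $\mathscr{M}_{\mathbf{e}}$ isometrically onto the subspace $V \subset \mathbf{M}_{M\times N}(\R)$ of matrices whose every row and every column sum to zero. On $\mathbf{M}_{M\times N}(\R)$ the functional $H \mapsto H_{ab}$ is represented by the standard basis matrix $E^H_{ab}$, so on $V$ it is represented by the orthogonal projection $P_V(E^H_{ab})$. Double-centering gives
\begin{align}
P_V(E^H_{ab})_{k\ell} \ = \ \Bigl(\delta_{ka} - \tfrac{1}{M}\Bigr)\Bigl(\delta_{\ell b} - \tfrac{1}{N}\Bigr),
\end{align}
after which the elementary identity $\sum_{k=1}^M (\delta_{ka} - 1/M)(\delta_{ka'} - 1/M) = \delta_{aa'} - 1/M$ (and its analogue for the sum over $\ell$) factors the Hilbert--Schmidt inner product as
\begin{align}
\bigl\langle P_V(E^H_{ab}), P_V(E^H_{a'b'})\bigr\rangle \ = \ \Bigl(\delta_{aa'} - \tfrac{1}{M}\Bigr)\Bigl(\delta_{bb'} - \tfrac{1}{N}\Bigr).
\end{align}

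The remainder is bookkeeping on the index translation between full-matrix and block coordinates. For $(i,j) \in V_b$ the entry $X_{ij}$ equals $H_{i, j-M}$, so the block indices are $(a,b) = (i, j-M)$; for $(i,j) \in V_w$ the symmetry $X_{ij} = X_{ji}$ gives $X_{ij} = H_{j, i-M}$, so $(a,b) = (j, i-M)$. Substituting each of the four combinations into the inner-product formula above and multiplying by $1/N$ produces exactly the four cases stated in the lemma. I expect the only genuinely delicate step to be this index translation, since which $\delta$-factor carries the $1/M$ versus the $1/N$ correction depends on whether each index pair lies in $V_b$ or in $V_w$; the reduction to a projection calculation via Itô's isometry and the double-centering identity itself are routine.
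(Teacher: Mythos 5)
Your argument is correct and lands on the same computation as the paper, but it gets there by a somewhat different route. The paper first invokes the SVD parameterization of Lemma \ref{lemma:parameterizationvanishingmatrices}: it declares the covariation of the reduced coordinates $\wt{X}$ to be $\frac{1}{N}\delta_{ik}\delta_{j\ell}\,\d t$, conjugates back through $O(M),O(N)$, and uses that the last columns of these orthogonal matrices are the constant unit vectors to evaluate $\sum_{m=1}^{M-1}O(M)_{im}O(M)_{km}=\delta_{ik}-\frac{1}{M}$ and its $N$-analogue. You bypass the SVD entirely: you identify $\mathscr{M}_{\mathbf{e}}$ with the doubly-centered block matrices (rows and columns of $H$ summing to zero), represent the evaluation functional by the double-centered image of the standard basis matrix, and read off the covariation from the Hilbert-space It\^{o} isometry. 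These are the same calculation in different clothing --- the paper's partial sums over $m\leq M-1$ and $n\leq N-1$ are precisely the entries of your centering projections $I-\frac{1}{M}\mathbf{1}\mathbf{1}^{\ast}$ and $I-\frac{1}{N}\mathbf{1}\mathbf{1}^{\ast}$ --- but your version is basis-free and makes explicit two points the paper leaves implicit: that the drift term contributes nothing to the covariation, and how all four cases follow from the single block formula via the symmetry $X_{ij}=X_{ji}$. One caveat: the claim that $X\leftrightarrow H$ is an isometry glosses over the factor $\on{Tr}(X_1^{\ast}X_2)=2\on{Tr}(H_1^{\ast}H_2)$; this only amounts to fixing the normalization of the Brownian motion on $\mathscr{M}_{\mathbf{e}}$, and the convention you adopt (entry-wise standard Brownian motion in the block coordinates) is exactly the one the paper itself declares after Lemma \ref{lemma:parameterizationvanishingmatrices}, so this ambiguity is shared with, not introduced relative to, the paper's proof.
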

\begin{proof}
We consider the case $(i,j), (k, \ell) \in V_b$; the other cases follow similarly. By Lemma \ref{lemma:parameterizationvanishingmatrices}, we may assume the normalized adjacency matrix $X$ is of the form 
\begin{align}
X = O(M) \left( \wt{X} \oplus 0 \right) O(N)^\ast
\end{align}
where the orthogonal component corresponds to the span of the eigenvector $\mathbf{e}$ and is thus constant in time. Moreover, we may assume the Gaussian measure on $\mathbf{M}_{(M-1) \times (N-1)}(\R)$ is given by drawing each entry from independent standard one-dimensional Gaussian distributions. This implies the quadratic covariation process for $\wt{X}(t)$ is given by
\begin{align}
\d \langle \wt{X}_{ij}(t), \wt{X}_{k\ell}(t) \rangle \ = \ \frac{1}{N} \delta_{ik} \delta_{j \ell}
\end{align}
where we use the assumption $(i, j), (k, \ell) \in V_b$. Thus, under this same assumption, we compute
\begin{align}
\d \langle H_{ij}, H_{k \ell} \rangle \ &= \ \sum_{m,n} \sum_{x,y} \ \left[ O(M)_{im} O(N)_{jn} O(M)_{kx} O(N)_{\ell y} \right] \  \d \langle \wt{X}_{mn}, \wt{X}_{xy} \rangle \\
&= \ \sum_{m,n} \sum_{x,y} \ \left[ O(M)_{im} O(N)_{jn} O(M)_{kx} O(N)_{\ell y} \right] \times \frac{1}{N} \delta_{mx} \delta_{n y} \\
&= \ \frac{1}{N} \sum_{m,n} \ O(M)_{im} O(N)_{jn} O(M)_{km} O(N)_{\ell n} \\
&= \ \frac{1}{N} \left( \sum_{m = 1}^{M-1} \ O(M)_{im} O(M)_{km} \right) \left( \sum_{n = 1}^{N-1} \ O(N)_{jn} O(N)_{\ell n} \right).
\end{align}
Because the matrices $O(M), O(N)$ give the SVD of the matrix $H(t)$, we know $O(M)_{iM} = M^{-1/2}$ and $O(N)_{kN} = N^{-1/2}$ for any suitable indices $i, k$. With this and the assumption that $O(M), O(N)$ are orthogonal matrices, we see
\begin{align}
\sum_{m = 1}^{M-1} \ O(M)_{im} O(M)_{km} \ &= \ \delta_{ik} - \frac{1}{M}, \\
\sum_{n = 1}^{N-1} \ O(N)_{jn} O(N)_{\ell n} \ &= \ \delta_{j \ell} - \frac{1}{N},
\end{align}
which completes the derivation of the covariation processes in the case $(i, j), (k, \ell) \in V_b$. 
\end{proof}
Before proving Proposition \ref{prop:generatormatrixOUprocess}, we first establish the following shorthand for differentiation of any sufficiently smooth function $F$ on a matrix space and indices $(i,j)$:
\begin{align}
\partial_{ij} F(X) \ := \ \left[ \partial_{X_{ij}} F \right] (X).
\end{align}
We now proceed with the proof of Proposition \ref{prop:generatormatrixOUprocess}. By the Ito formula, for any $F \in C^2(\mathscr{M}_{\mathbf{e}})$, we have
\begin{align}
\d F(X) \ = \ \sum_{i, j = 1}^{M+N} \ \left[ \partial_{ij} F \right](X) \ \d X_{ij} \ + \ \frac12 \sum_{i,j,k,\ell = 1}^{M+N} \ \left[ \partial_{ij} \partial_{k \ell} F \right] (X) \ \d \langle X_{ij}, X_{k \ell} \rangle.
\end{align}
Taking expectation, the martingale term in $\d X_{ij}$ vanishes; by construction, we thus have
\begin{align}
\d \E F(X) \ = \ - \frac12 \sum_{i, j = 1}^{M+N} \ \E \left[ X_{ij} \left[ \partial_{ij} F \right] (X) \right] \d t \ + \ \frac12 \sum_{i,j,k, \ell = 1}^{M+N} \ \E \left[ \left[ \partial_{ij} \partial_{k \ell} F \right] (X) \right] \d \langle X_{ij}, X_{k \ell} \rangle.
\end{align}
Computing the quadratic covariation terms via Lemma \ref{lemma:quadraticvariation}, we have
\begin{align}
MN^2 \sum_{i,j,k, \ell = 1}^{M+N} \ \E \left[ \left[ \partial_{ij} \partial_{k \ell} F \right] (X) \right] \d \langle X_{ij}, X_{k \ell} \rangle \ = &\ \sum_{(i,j), (k, \ell) \in V_b} \ \E \left[ [\partial_{ij} (\partial_{ij} + \partial_{k \ell} - \partial_{i \ell} - \partial_{jk}) F](X)\right] \d t \label{eq:bsame} \\
&+ \ \sum_{(i,j), (k, \ell) \in V_w} \ \E \left[ [\partial_{ij}(\partial_{ij} + \partial_{k \ell} - \partial_{i \ell} - \partial_{jk}) F](X) \right] \d t \label{eq:wsame} \\
&+ \ \sum_{(i,j) \in V_b, (k, \ell) \in V_w} \ \E \left[ [\partial_{ij}(\partial_{ij} + \partial_{k \ell} - \partial_{ik} - \partial_{j \ell} )F ](X)  \right] \d t \label{eq:bw} \\
&+ \ \sum_{(i,j) \in V_w, (k, \ell) \in V_b} \ \E \left[ [\partial_{ij}(\partial_{ij} + \partial_{k \ell} - \partial_{ik} - \partial_{j \ell} )F ](X) \right] \d t. \label{eq:wb}
\end{align}
We note however, upon the bijection $(i,j) \mapsto (k, \ell)$, that the summations given by the RHS of \eqref{eq:bsame} and \eqref{eq:wsame} are equal. Similarly, we see the summations given by \eqref{eq:bw} and \eqref{eq:wb} are also equal. Lastly, we see \eqref{eq:bsame} and \eqref{eq:bw} are equal upon switching the indices $k, \ell$. Thus, because the process $\E F(X)$ contains no martingale term,
\begin{align}
\mathscr{L} \ &= \ \frac{2}{MN^2} \sum_{(i,j), (k,\ell) \in V_b} \ \partial_{ij} \left( \partial_{ij} + \partial_{k\ell} - \partial_{i \ell} - \partial_{jk} \right) \ - \ \frac{1}{2} \sum_{i,j = 1}^{M+N} \ X_{ij} \partial_{ij} \\
&= \ \frac{2}{MN^2} \sum_{(i,j), (k,\ell) \in V_b} \ \partial_{ij} \left( \partial_{ij} + \partial_{k \ell} - \partial_{i \ell} - \partial_{jk} \right) \ - \ \sum_{(i,j) \in V_b} \ X_{ij} \partial_{ij}, \label{eq:normalizedgenerator}
\end{align}
where the second equality \eqref{eq:normalizedgenerator} holds since $X$ is symmetric. To understand the second-order terms in \eqref{eq:normalizedgenerator}, we claim
\begin{align}
\sum_{(i,j),(k,\ell) \in V_b} \ \partial_{ij} \left( \partial_{ij} + \partial_{k \ell} - \partial_{i \ell} - \partial_{jk} \right) \ = \ \frac{1}{4} \sum_{(i,j), (k, \ell) \in V_b} \ \left( \partial_{ij} + \partial_{k \ell} - \partial_{i \ell} - \partial_{jk} \right)^2. \label{eq:secondordertermsgrouped}
\end{align}
Indeed, \eqref{eq:secondordertermsgrouped} follows from the fact that we are summing over all indices $(i,j), (k, \ell) \in V_b$. For the same reason, as well as the assumed relations $\sum_j X_{ij} = \sum_i X_{ij} = 0$, we also have
\begin{align}
\sum_{(i,j) \in V_b} \ X_{ij} \partial_{ij} \ = \ \frac{1}{4MN} \sum_{(i,j), (k,\ell) \in V_b} \ \left( X_{ij} + X_{k \ell} - X_{i \ell} - X_{jk} \right) \left( \partial_{ij} + \partial_{k\ell} - \partial_{i \ell} - \partial_{jk} \right).
\end{align}
This completes the proof of Proposition \ref{prop:generatormatrixOUprocess}. \QED
\subsection{The main result: bulk correlation statistics}
\begin{definition} \label{definition:averagedcorrelation}
Suppose $H_1$ and $H_2$ are two random matrix ensembles of equal dimension $N$ in a matrix space $\mathbf{M}$, e.g. Wigner matrices, Wishart matrices, and the space $\mathscr{M}_{\mathbf{e}}$. We say the \emph{averaged bulk eigenvalue correlation statistics} of $H_1$ and $H_2$ coincide at the energy $E_0$ if the following holds.

For any $n \in \Z_{\geq 0}$, any test function $\varphi \in C_c^{\infty}(\R)$, and a constant $c > 0$ sufficiently small, we have for $b = N^{-1 + c}$
\begin{align}
\frac{1}{2b} \int_{E_0 - b}^{E_0 + b} \ \d E' \ \int_{\R^n} \ \varphi(x_1, \ldots, x_n) N^n \left( \varrho_{H_1}^{(n)} - \varrho_{H_2}^{(n)} \right) \left( E' + \frac{dx_1}{N \varrho_{\infty}(E_0)}, \ldots, E' + \frac{dx_n}{N \varrho_{\infty}(E_0)} \right) \ = \ o_{N \to \infty}(1)
\end{align}
where $\varrho_{H_i}^{(n)}$ denotes the $n$-point correlation function of the matrix ensemble $H_i$ (for $i = 1, 2$). Here, we also use $\varrho_{\infty}$ to denote the density function of either the Marchenko-Pastur law or the semicircle law depending on if the random matrix ensembles $H_1, H_2$ are covariance matrix ensembles or Wigner matrix ensembles, respectively. 
\end{definition}
In particular, Definition \ref{definition:averagedcorrelation} requires a small average around the energy $E_0$. We note that there are results, e.g. in \cite{LSY} in the ensemble of Wigner matrices, that provide similar results without an average of the energy; universality results along this line are known as \emph{fixed energy} universality results. Although it is believed that the arguments in \cite{LSY} extend to linearized covariance matrices, we do not pursue fixed energy universality in this paper.

To state the main theorem, we now introduce the following notation for the covariance matrix ensembles (and their linearization ensembles) at a given time. This is the notation used in \cite{Y1}, for example.
\begin{notation}
For a given time $t \geq 0$, we let $\mathscr{X}_{\ast}(t)$ denote the random matrix ensemble of matrices $X_\ast(t) = H(t)^\ast H(t)$, where the matrix $H(t)$ solves the matrix-valued Ornstein-Uhlenbeck equation 
\begin{align}
\emph{d}H(t) \ = \ \frac{1}{\sqrt{N}} \emph{d}\mathbf{B}(t) \ - \ \frac12 H(t) \ \emph{d} t \label{eq:matrixOUblock}
\end{align}
with initial data $H(0)$ the upper-right block of the normalized adjacency matrix $X(0) = (H(0), H^\ast(0))$ of a graph in $\Omega$.

Similarly, we let $\mathscr{X}(t)$ denote the random matrix ensembles of linearizations $X(t) = (H(t), H(t)^\ast)$.
\end{notation}
We now define the bulk of $\varrho_{\on{MP}}$ as the following interval:
\begin{align}
\mathscr{I}_{\on{MP}, \e} \ = \ \left[ \e, \ (1-\e)(1 + \sqrt{\gamma})^2 \right].
\end{align}
Here, $\e > 0$ is a fixed (small) constant as in the definition of the domains $U_{\e}$ and $U_{\e, \delta}$. We also recall the definition $\gamma := 1/\alpha$ from \cite{Y1}. Similarly, we define the bulk of the linearization to be 
\begin{align}
\mathscr{I}_{\on{linear},\e} \ = \ \pm \sqrt{\mathscr{I}_{\on{MP},\e}} \ = \ \pm \left[ \sqrt{\e}, \sqrt{1 - \e}(1 + \sqrt{\gamma}) \right]
\end{align}
Lastly, we introduce the following sparsity parameter, which is the same sparsity parameter used in \cite{Y1}:
\begin{align}
D \ = \ d_b \wedge \frac{N^2}{d_b^3}.
\end{align}
In this thesis, we will let $d_b$ grow as
\begin{align}
N^{\gamma} \ \leq \ d_b \ \leq \ N^{2/3 - \gamma}
\end{align}
for any $\gamma > 0$. In particular, we have the a priori estimate $D \geq N^{\gamma}$.

We now state the main theorem, which serves as the second step in the three-step strategy discussed in the introduction.
\begin{theorem} \label{theorem:averagedcorrelation}
Suppose $\e > 0$ and $\zeta > 0$ are fixed constants. Then, for any $t \in [0, N^{-1 - \zeta} D^{1/2}]$ and any energy $E \in \mathscr{I}_{\on{linear}, \e}$, the averaged bulk eigenvalue correlation statistics of $\mathscr{X}(0)$ and $\mathscr{X}(t)$ coincide.

Thus, for any $t \in [0, N^{-1 - \zeta} D^{1/2}]$ and any energy $E \in \mathscr{I}_{\on{MP},\e}$, the averaged bulk correlation statistics of $\mathscr{X}_{\ast}(0)$ and $\mathscr{X}_{\ast}(t)$ coincide.
\end{theorem}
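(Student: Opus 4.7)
The plan is to prove Theorem \ref{theorem:averagedcorrelation} via a Green's function comparison along the matrix-valued Ornstein--Uhlenbeck flow \eqref{eq:matrixOUprocess}, following the broad strategy of \cite{EYY, BHKY}. First, I would reduce the claim on the averaged $n$-point correlation functions to a comparison of joint moments of traces of the resolvent $G(z) = (X - z)^{-1}$ at spectral parameters $z = E' + i\eta$ with $\eta = N^{-1-\delta}$, averaged over $E'$ in the window of length $b = N^{-1+c}$; this is the standard reduction via the Helffer--Sjöstrand functional calculus, as in \cite{EYY}. The covariance statement for $\mathscr{X}_\ast(t)$ then follows from the linearization statement for $\mathscr{X}(t)$ via the identification of the nonzero spectrum of $X_\ast$ with the squared nonzero spectrum of $X$, so it suffices to treat the linearized case on $\mathscr{I}_{\on{linear},\e}$.

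Next, I would apply Ito's formula in the form of Proposition \ref{prop:generatormatrixOUprocess} to an observable of the form $F(X) = \prod_{k=1}^n \on{Tr}\, G(z_k)$, yielding $\partial_t \E F(X(t)) = \E (\mathscr{L} F)(X(t))$. The generator $\mathscr{L}$ splits into a switching Laplacian $\tfrac{1}{8MN^2}\sum (\partial_{ij}^{k\ell})^2$ and a switching drift $\tfrac{1}{16MN}\sum X_{ij}^{k\ell} \partial_{ij}^{k\ell}$; each derivative $\partial_{ij}^{k\ell}$ differentiates $\on{Tr}\,G$ into signed sums of products of four Green's function entries, while each $X_{ij}^{k\ell}$ is the corresponding signed sum of four matrix entries encoding the simple switching. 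Expanding the drift via an integration-by-parts/cumulant identity adapted to the biregular bipartite constraint, the Gaussian component of the drift cancels the switching Laplacian up to a residual controlled by the third and higher cumulants of the entries of $X(t)$.

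At this point the entrywise local law of \cite{Y1} enters, supplying a bound of the form $|G_{ij}(z) - m_\infty(z) \delta_{ij}| \prec \Psi(z)$ on the bulks $\mathscr{I}_{\on{linear},\e}$ and $\mathscr{I}_{\on{MP},\e}$, together with a sparsity gain proportional to $D^{-1/2}$. Each resolvent entry appearing in a four-index switching contributes a factor $\Psi$, and after summing over the $\sim M^2N^2$ quadruples $(i,j,k,\ell) \in V_b \times V_b$ and balancing against the $(MN^2)^{-1}$ and $(MN)^{-1}$ normalizations from $\mathscr{L}$ and the sparsity factor, one obtains a pointwise bound of the shape $|\partial_t \E F| \lesssim N^{1+\zeta/2} D^{-1/2}$ up to logarithmic factors. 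Integrating over $t \in [0, N^{-1-\zeta} D^{1/2}]$ produces a total deviation of $o(1)$, which combined with the Helffer--Sjöstrand reduction yields coincidence of the averaged bulk correlation statistics.

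The main obstacle, as in \cite{BHKY}, is the analysis of the switching drift $X_{ij}^{k\ell} \partial_{ij}^{k\ell}$. Unlike the Wigner setting, the entries of $X(t)$ are not independent, and the biregular row and column sum identities $\sum_j X_{ij} = \sum_i X_{ij} = 0$ must be exploited at the level of higher cumulants, not merely in the form already used in the derivation of \eqref{eq:normalizedgenerator}. Performing a cumulant expansion in the combinatorially rich space of four-index switching operators, and extracting the full sparsity factor $D^{1/2}$ that fixes the admissible time scale, is precisely where the technical bulk of the argument lies and is the step most likely to require new input beyond the Wigner case of \cite{EYY}.
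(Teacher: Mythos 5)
Your overall skeleton (reduce the averaged correlation statistics to a comparison of expectations of smooth functions of traces of resolvents at scale $\eta\sim N^{-1-\beta}$ via the EYY-type reduction, treat only the linearization and deduce the covariance case from the spectral correspondence, then control $\partial_t \E F(X(t))$ along the Ornstein--Uhlenbeck flow and integrate over $t\leq N^{-1-\zeta}D^{1/2}$) matches the paper. But the step that actually carries the theorem is missing. You propose to show $\partial_t\E F = \E(\mathscr{L}F)$ is small by an integration-by-parts/cumulant expansion in which the drift term $\frac{1}{16MN}\sum X_{ij}^{k\ell}\partial_{ij}^{k\ell}$ cancels the switching Laplacian up to residuals controlled by higher cumulants of the entries of $X(t)$, and you yourself flag that carrying this out for the correlated, row- and column-constrained biregular ensemble, while extracting the sparsity factor $D^{1/2}$, ``is precisely where the technical bulk lies and is the step most likely to require new input.'' That is exactly the gap: for this ensemble the entries at $t=0$ are strongly dependent (exact regularity constraints), there is no clean cumulant machinery available, and no estimate in the proposal actually produces the $D^{-1/2}$ gain -- the entrywise local law factor $\Psi$ you invoke does not supply it.

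The paper closes this gap by a different mechanism that avoids cumulant expansions altogether. Using $X(t)\overset{d}{=}e^{-t/2}X(0)+(1-e^{-t})^{1/2}W$, the generator is decomposed (Lemma \ref{lemma:generatorLdecomposition}) into an action on the Gaussian component $W$ and an action on the graph component $X(0)$; the latter is compared with the generator $\mathcal{Q}$ of the discrete Poisson process of simple switchings on the space of biregular bipartite graphs. The uniform measure on graphs is exactly invariant under $\mathcal{Q}$ (Proposition \ref{prop:invariantuniformgeneratorvanishes}), so $\E\mathcal{Q}f_F=0$, and the whole content of the time-derivative bound is the deterministic-plus-moment comparison $\mathcal{Q}f_F=\mathscr{L}F+R$ of Proposition \ref{prop:comparegeneratorsdeterministic}, where $R$ is controlled by Taylor expansion along switching directions together with the moment estimates on adjacency entries and their fluctuations $A_{ij}-d_b/N$ in the appendix; this is where the factor $D^{-1/2}N^{1+\e}$ comes from, yielding Proposition \ref{prop:shorttimeestimates}. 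The local law, eigenvector delocalization, and the perturbed Green's function bound of Lemma \ref{lemma:perturbedbounds} enter only afterwards, to bound the seminorms $\|\partial^i F\|_{r,s}$ of the trace observable (including its values at switch-shifted arguments $X+d^{-1/2}\theta\cdot Y$, a point your proposal does not address) by $N^{O(\beta)}$, with a separate crude bound on the low-probability event. Without the switching-invariance comparison (or a genuine substitute for it), your argument does not establish the smallness of $\partial_t\E F$ on the claimed time scale, so the proof as proposed is incomplete at its central point.
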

We now briefly remark on the short-time nature of the result in Theorem \ref{theorem:averagedcorrelation}. In particular, because it does not see long-term behavior, Theorem \ref{theorem:averagedcorrelation} does not give full bulk universality of correlation functions. The time interval $[N^{-1 - \zeta} D^{1/2}, \infty)$ is addressed in the following result, which is an immediate consequence of the main theorem in \cite{Y3}. This resembles the long-term behavior results in \cite{LSY} and \cite{LY} applied to the ensemble of random regular graphs in \cite{BHKY}.
\begin{theorem}
For any $t \in [N^{-1 - \zeta} D^{1/2}, \infty)$, the averaged bulk eigenvalue correlation statistics of $\mathscr{X}(t)$ and the GOE coincide. Thus, averaged bulk correlation statistics of $\mathscr{X}_{\ast}(t)$ and the Gaussian Wishart Ensemble (GWE) coincide.
\end{theorem}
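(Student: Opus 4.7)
The plan is to derive this theorem as an essentially immediate consequence of the main ergodicity/local-relaxation result of \cite{Y3}, applied to the matrix-valued OU process \eqref{eq:matrixOUprocess} with initial data the normalized biregular bipartite adjacency matrix. The first step is to identify the invariant measure of \eqref{eq:matrixOUprocess}: by Lemma \ref{lemma:parameterizationvanishingmatrices} the SDE on $\mathscr{M}_{\mathbf{e}}$ is unitarily equivalent to the standard matrix OU process on $\mathbf{M}_{(M-1)\times(N-1)}(\R)$ driven by independent scalar Brownian motions, whose invariant measure is the Gaussian Wishart linearization on that space. Up to the rank-one defect along the Perron--Frobenius eigenvector $\mathbf{e}$ (which contributes only one trivial eigenvalue), this invariant measure has the same bulk eigenvalue correlation functions as the GOE on $\mathscr{M}$ on the interval $\mathscr{I}_{\on{linear},\e}$.

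Next, I would invoke the main theorem of \cite{Y3}, which is the third step of the robust three-step strategy: it asserts that for $t \geq N^{-1-\zeta} D^{1/2}$, the averaged local correlation statistics of $X(t)$ in the sense of Definition \ref{definition:averagedcorrelation} coincide with those of the invariant measure of the DBM, under hypotheses satisfied by $\mathscr{X}(0)$ (namely the local law and rigidity estimates from \cite{Y1}, together with the sparsity bound $D \geq N^\gamma$). Combining this with the invariant-measure identification in the previous paragraph immediately yields the first claim of the theorem on the linearized ensemble.

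The second claim, concerning $\mathscr{X}_\ast(t)$ and the GWE, follows from the standard eigenvalue correspondence between linearization and covariance: the nonzero eigenvalues of $X(t) = (H(t), H(t)^\ast)$ are $\pm \sigma_i(H(t))$ and the eigenvalues of $X_\ast(t) = H(t)^\ast H(t)$ are $\sigma_i(H(t))^2$. Since $\mathscr{I}_{\on{linear},\e} = \pm \sqrt{\mathscr{I}_{\on{MP},\e}}$, the squaring map is a smooth bijection on the relevant intervals with derivative bounded above and below on any compact subset of $\mathscr{I}_{\on{linear},\e} \setminus \{0\}$, so the change of variables on the $n$-point correlation functions converts bulk statistics at $E \in \mathscr{I}_{\on{linear},\e}$ to bulk statistics at $E^2 \in \mathscr{I}_{\on{MP},\e}$, matching the GWE.

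The main obstacle is the invocation of \cite{Y3}: one must check that the DBM on the constrained space $\mathscr{M}_{\mathbf{e}}$ rather than the full symmetric matrix space still fits into that framework, that the generator computed in Proposition \ref{prop:generatormatrixOUprocess} produces eigenvalue dynamics satisfying the hypotheses of the abstract local ergodicity theorem (in particular that the switching-type cross terms in $\mathscr{L}$ do not spoil the relaxation bound of $N^{-1-\zeta} D^{1/2}$), and that the input local law from \cite{Y1} suffices at the required microscopic scales. The rank-one Perron--Frobenius defect must also be tracked carefully, but since it contributes a single eigenvalue located away from $\mathscr{I}_{\on{linear},\e}$ it does not affect bulk statistics.
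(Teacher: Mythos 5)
Your proposal follows essentially the same route as the paper, which offers no independent argument and simply records this theorem as an immediate consequence of the main ergodicity result of \cite{Y3}, combined with the standard identification of the invariant measure and the linearization-to-covariance spectral correspondence. Your additional remarks on verifying that the constrained space $\mathscr{M}_{\mathbf{e}}$ and the local law inputs from \cite{Y1} satisfy the hypotheses of \cite{Y3} are exactly the checks deferred to that companion paper, so there is nothing genuinely different here.
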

%
%
%
\section{Short-Time Stability of DBM}
In this section, we aim to use the combinatorial structure of the generator $\mathscr{L}$ of the process \eqref{eq:matrixOUprocess} to study short-time stability of eigenvalue statistics. To state the short-time stability estimates, we first define the following deterministic semi-norm on suitably regular functions:
\begin{align}
\| F \|_{r, t} \ = \ \left( \E |F(X(t))|^r \right)^{1/r}, \label{eq:classicalseminorm}
\end{align}
where the expectation is taken over the randomness of $X(t)$. We extend this norm to derivatives as follows:
We extend this seminorm for derivatives: for any $F \in C^k(\mathscr{M}_{\mathbf{e}})$:
\begin{align}
\| \partial^k F \|_{r, t} \ := \ \left\| \sup_{\theta_i \in [0,1]} \sup_{X_i \in \mathscr{X}} \  \left| \partial_{X_1} \ldots \partial_{X_k} F \left(\cdot + d_b^{-1/2} \sum_{i = 1}^k \ \theta_i X_i \right) \right| \right\|_{r,t}. \label{eq:seminormderivatives}
\end{align}
In particular for $k = 0$ the seminorms \eqref{eq:classicalseminorm} and \eqref{eq:seminormderivatives} coincide. 

We may now state the stability estimate. All adjacency matrices are of bipartite graphs with the normalization as in \eqref{eq:normalizedadjacency}.
\begin{prop} \label{prop:shorttimeestimates}
Suppose $X(t)$ solves the SDE \eqref{eq:matrixOUprocess} with initial condition $X(0)$ a normalized adjacency matrix. Moreover, suppose $r(\varepsilon)$ is sufficiently large as a function of a fixed $\varepsilon > 0$. Then, for any $F \in C^4(\mathscr{M}_{\mathbf{e}})$, we have
\begin{align}
\E F(X(t)) - \E F(X(0)) \ = \ O \left( D^{-1/2} N^{1 + \varepsilon} \max_{1 \leq i \leq 4} \ \int_0^t \ \| \partial^i F \|_{r,s} \ \emph{d} s \right).
\end{align}
\end{prop}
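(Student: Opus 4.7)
The plan is to follow the switching-invariance strategy of Bauerschmidt--Huang--Knowles--Yau for random regular graphs, adapted to our biregular bipartite ensemble and the OU generator of Proposition \ref{prop:generatormatrixOUprocess}. That proposition immediately gives the Kolmogorov identity
\[
\E F(X(t)) - \E F(X(0)) = \int_0^t \E(\mathscr{L} F)(X(s)) \, \emph{d}s,
\]
so it suffices to establish the pointwise estimate $|\E(\mathscr{L} F)(X(s))| \lesssim D^{-1/2} N^{1+\varepsilon}\max_{1 \leq i \leq 4} \|\partial^i F\|_{r,s}$ uniformly in $s$ and then integrate. The essential input is that for each quadruple $(i,j,k,\ell)\in V_b \times V_b$, the switching $\tau_{ij}^{k\ell}$ is an involution on the space of $(d_b,d_w)$-regular bipartite graphs (fixing those configurations where the switching is illegal) that preserves the uniform measure. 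Since the law of $X(s)$ is the OU push-forward of the uniform law on adjacency matrices and a legal switching shifts the normalized adjacency by $d_w^{-1/2}\xi_{ij}^{k\ell}$, $\tau$-invariance yields the exact identity
\[
\E\bigl[\bigl(F(X(s) + d_w^{-1/2}\xi_{ij}^{k\ell}) - F(X(s))\bigr)\mathbf{1}_{\text{valid}}\bigr] = 0,
\]
where $\mathbf{1}_{\text{valid}} = A_{ij}A_{k\ell}(1-A_{i\ell})(1-A_{kj})$ is the indicator that the switching is legal.

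I would then Taylor-expand $F(X(s) + d_w^{-1/2}\xi_{ij}^{k\ell})$ in the direction $\xi_{ij}^{k\ell}$ to fourth order and sum the resulting identities over $(i,j,k,\ell) \in V_b\times V_b$. Using $\partial_{\xi_{ij}^{k\ell}} F = \tfrac{1}{2}\partial_{ij}^{k\ell} F$ from the symmetric block structure, the second-order Taylor coefficient reproduces the diffusion piece $\sum(\partial_{ij}^{k\ell})^2 F$ of $\mathscr{L}$ with prefactor $(8MN^2)^{-1}$, while the first-order coefficient, after expanding $\mathbf{1}_{\text{valid}}$ polynomially in the entries of $A$ and collecting the $X$-linear contributions, reproduces the drift piece $\sum X_{ij}^{k\ell}\partial_{ij}^{k\ell} F$ with its prefactor $(16MN)^{-1}$; this uses $X_{ij}^{k\ell} = 2 d_w^{-1/2}\on{Tr}(\xi_{ij}^{k\ell} A)$, which holds because the constant shift $\frac{d_b}{N}(\mathbf{1})$ appearing in \eqref{eq:normalizedadjacency} is annihilated by the fact that the entries of $\xi_{ij}^{k\ell}$ sum to zero. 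Solving the switching identity for $\E(\mathscr{L} F)(X(s))$ then leaves only the third- and fourth-order Taylor contributions together with residual terms from the higher-degree monomials in $\mathbf{1}_{\text{valid}}$. The third-order term carries an extra gain of $d_w^{-1/2} \lesssim d_b^{-1/2} \leq D^{-1/2}$, and the fourth-order remainder involves shifted arguments $X(s) + \theta d_w^{-1/2}\xi_{ij}^{k\ell}$ with $\theta\in[0,1]$ and $\xi_{ij}^{k\ell}\in\mathscr{X}$, which are precisely the shifts encoded in the seminorm \eqref{eq:seminormderivatives}; each contribution is therefore controlled by $\|\partial^i F\|_{r,s}$ for the relevant $i$.

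The main obstacle will be the combinatorial bookkeeping required to identify the expanded Taylor-switching identity with the explicit form of $\mathscr{L}$: the drift term of $\mathscr{L}$ has no a priori source in $F(X+c\xi)-F(X)$, and recovering it requires tracking which linear-in-$A$ monomials in the expansion of $\mathbf{1}_{\text{valid}}$ sum to exactly $X_{ij}^{k\ell}$ after summation over $V_b\times V_b$. The residual higher-degree terms from $\mathbf{1}_{\text{valid}}$ are then bounded using moment and concentration estimates on sums of the form $\sum A_{ij}A_{k\ell}$ over the biregular bipartite ensemble, with the local law of \cite{Y1} supplying the high-probability inputs; these produce errors of size $O(D^{-1})$ per quadruple. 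Multiplying by the $O(M^2)=O(N^2)$ quadruples, absorbing the $(MN^2)^{-1}$ and $(MN)^{-1}$ prefactors in $\mathscr{L}$, and picking up an $N^\varepsilon$ from the high-probability estimates yields the claimed $D^{-1/2}N^{1+\varepsilon}$ pointwise bound, and integration in $s$ closes the proof.
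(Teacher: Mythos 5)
Your overall strategy is the same as the paper's: use invariance of the uniform measure on biregular graphs under simple switchings, Taylor-expand the switched increment to fourth order, identify the low-order terms with the generator $\mathscr{L}$ of Proposition \ref{prop:generatormatrixOUprocess}, and control the remainders by moment estimates on adjacency entries together with the seminorms \eqref{eq:seminormderivatives} (this is exactly the content of Propositions \ref{prop:invariantuniformgeneratorvanishes}, \ref{prop:comparegeneratorsdeterministic} and the appendix, where the comparison is packaged as the Poisson switching generator $\mathcal{Q}$ with $\E_{\mu_{\on{unif}}}\mathcal{Q}f_F = 0$).

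However, there is a genuine gap at the step where you ``solve the switching identity for $\E(\mathscr{L}F)(X(s))$.'' For $s>0$ the law of $X(s)$ is not a function of the graph alone: in law $X(s) = e^{-s/2}X(0) + (1-e^{-s})^{1/2}W$ with $W$ an independent Gaussian matrix on $\mathscr{M}_{\mathbf{e}}$. Switching the initial graph (your involution $\tau_{ij}^{k\ell}$, with the validity indicator a function of $A(0)$) shifts $X(s)$ by $e^{-s/2}d_w^{-1/2}\xi_{ij}^{k\ell}$ -- note the factor $e^{-s/2}$ missing from your identity -- and, more importantly, the resulting Taylor-expanded identity can only reproduce the part of $\mathscr{L}$ that acts through the graph component: a diffusion term damped by $e^{-s}$ and a drift term built from $e^{-s/2}X(0)$ rather than from the full $X(s)$. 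The discrepancy is precisely the action of $\mathscr{L}$ in the Gaussian directions, whose expectation vanishes only because the Gaussian measure is the invariant measure of the Ornstein--Uhlenbeck flow (equivalently, by Gaussian integration by parts the Gaussian drift contribution cancels the $(1-e^{-s})$ portion of the diffusion term); crude size estimates do not suffice, since for $s$ of order one each of these pieces is of size $N\|\partial^2F\|_{r,s}$, far exceeding the allowed $D^{-1/2}N^{1+\varepsilon}$ per unit time, and the proposition is stated for all $t$. The paper handles this by conditioning on the decomposition and proving Lemma \ref{lemma:generatorLdecomposition}, which splits $\mathscr{L}F(X(t)) = \mathscr{L}F_X(W) + \mathscr{L}F_W(X)$, kills $\E\,\mathscr{L}F_X(W)$ by Gaussian invariance, and applies the switching comparison only to $\mathscr{L}F_W(X)$. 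Your proposal needs this (or an equivalent) step; without it, the identity you derive bounds the wrong quantity. Your accounting of the residual terms from $\mathbf{1}_{\text{valid}}$ is also lighter than what is actually needed (the fluctuations $A_{ij}-d_b/N$ are handled in the appendix by a further switching/invariance argument, which is where the $N^2/d_b^3$ component of $D$ enters, not by a local-law input), but that is a matter of detail rather than structure.
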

\subsection{The Poisson process}
To prove Proposition \ref{prop:shorttimeestimates}, we use the combinatorial interpretation of the generator $\mathscr{L}$ to compare it to the generator of a discrete Poisson process on the space of graphs itself. We define this Poisson process via its generator, which acts on functions $f$ on the space of \emph{unnormalized} adjacency matrices $A$ of biregular bipartite graphs:
\begin{align}
\mathcal{Q}f(A) \ = \ \frac{1}{4Nd_w} \ \sum_{(i,j) \in V_b} \ \sum_{(m,n) \in V_b} \ I_{ij}^{mn}(A) \left[ f(A - \xi_{ij}^{mn}) - f(A) \right],
\end{align}
where the coefficients are defined by
\begin{align}
I_{ij}^{mn}(A) \ = \ A_{ij} A_{mn} \left( 1 - A_{in} \right) \left(1 - A_{mj} \right).
\end{align}
The coefficient $I_{ij}^{mn}(A)$ detects whether or not the 1-regular graph $\{ ij, mn\}$ embeds, graph-theoretically, into the graph corresponding to the adjacency matrix $A$. Thus, by the basic theory of Poisson processes, the process corresponding to the generator $\mathcal{Q}$ may be described as follows: a Poisson clock of rate depending on $N, d_w$ signals events, which are performing a switching as described earlier in this paper about a randomly chosen 1-regular subgraph of $A$. With this description, as in \cite{Y1}, we deduce the following dynamical result concerning $\mathcal{Q}$. First, we adopt the notation in \cite{Y1} and let $\Omega$ denote the set of biregular bipartite graphs.
\begin{prop} \label{prop:invariantuniformgeneratorvanishes}
Let $\mu_{\on{unif}}$ denote the uniform probability measure on $\Omega$. Then, $\mu_{\on{unif}}$ is invariant under the generator $\mathcal{Q}$.
\end{prop}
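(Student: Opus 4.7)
The plan is to verify invariance directly from the definition, showing that $\sum_{A \in \Omega} \mathcal{Q}f(A) = 0$ for every test function $f$ on $\Omega$; since $\mu_{\on{unif}}$ is uniform on the finite set $\Omega$, this is equivalent to $\int \mathcal{Q}f \, \mathrm{d}\mu_{\on{unif}} = 0$, which is exactly invariance. Morally, I am going to prove detailed balance for the switching chain with respect to the counting measure.

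The key observation, already implicit in the combinatorial description of $\mathcal{Q}$, is that the set of ``legal switches'' comes equipped with a natural involution. Concretely, on the set of triples
\begin{align}
\mathscr{T} \ := \ \left\{ (A, (i,j), (m,n)) : A \in \Omega, \ (i,j), (m,n) \in V_b, \ I_{ij}^{mn}(A) = 1 \right\},
\end{align}
define
\begin{align}
\Phi(A, (i,j), (m,n)) \ = \ \bigl( A - \xi_{ij}^{mn}, \ (i,n), \ (m,j) \bigr).
\end{align}
Two things need to be checked. First, $\Phi$ lands in $\mathscr{T}$: that $A - \xi_{ij}^{mn}$ still lies in $\Omega$ is immediate since switchings preserve biregularity, and the indicator condition $I_{ij}^{mn}(A) = 1$, namely $A_{ij} = A_{mn} = 1$ and $A_{in} = A_{mj} = 0$, is exactly the statement that $I_{in}^{mj}(A - \xi_{ij}^{mn}) = 1$ on the switched graph. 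Second, $\Phi$ is an involution: using $\xi_{in}^{mj} = -\xi_{ij}^{mn}$ one sees $\Phi \circ \Phi = \on{id}$.

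With $\Phi$ in hand, split the generator into gain and loss parts,
\begin{align}
\sum_{A \in \Omega} \mathcal{Q}f(A) \ = \ \frac{1}{4Nd_w}\bigl( \mathcal{S}_1 - \mathcal{S}_2 \bigr), \qquad \mathcal{S}_1 \ = \ \sum_{\mathscr{T}} f(A - \xi_{ij}^{mn}), \qquad \mathcal{S}_2 \ = \ \sum_{\mathscr{T}} f(A).
\end{align}
Applying the change of variables $(A, (i,j),(m,n)) \mapsto \Phi(A,(i,j),(m,n)) = (B, (p,q), (r,s))$ to $\mathcal{S}_1$, the summand $f(A - \xi_{ij}^{mn})$ becomes $f(B)$, and bijectivity of $\Phi$ on $\mathscr{T}$ gives $\mathcal{S}_1 = \sum_{\mathscr{T}} f(B) = \mathcal{S}_2$. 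Hence $\sum_A \mathcal{Q}f(A) = 0$, proving invariance.

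There is no serious obstacle here: the only place where one could be careless is in verifying that $\Phi$ is well-defined (in particular that the switched graph is still in $\Omega$, and that $(i,n)$ and $(m,j)$ again lie in $V_b$, which follows because only the white coordinates are exchanged) and that it is genuinely an involution rather than just a bijection, so that the matching of $\mathcal{S}_1$ with $\mathcal{S}_2$ is unambiguous.
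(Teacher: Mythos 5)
Your argument is correct. The paper itself does not spell out a proof of this proposition: it simply asserts the invariance from the description of $\mathcal{Q}$ as a Poisson clock performing a uniformly chosen simple switching, deferring the details to \cite{Y1}. What you have written is precisely the verification that underlies that assertion, made explicit: the involution $\Phi(A,(i,j),(m,n)) = (A - \xi_{ij}^{mn},(i,n),(m,j))$ on the set of legal switchings exhibits the symmetry of the jump rates (detailed balance with respect to counting measure), and summing the gain and loss terms over $\mathscr{T}$ then gives $\sum_A \mathcal{Q}f(A) = 0$, which for the uniform measure on the finite set $\Omega$ is exactly invariance. Your checks are the right ones: $\xi_{in}^{mj} = -\xi_{ij}^{mn}$ gives $\Phi \circ \Phi = \operatorname{id}$, the condition $I_{ij}^{mn}(A) = 1$ transports to $I_{in}^{mj}(A - \xi_{ij}^{mn}) = 1$, switchings preserve biregularity and simplicity (the indicator, being of the form $A_{ij}A_{mn}(1-A_{in})(1-A_{mj})$ with $0/1$ entries, also automatically kills the degenerate index coincidences $i=m$ or $j=n$, so no separate distinctness hypothesis is needed), and $(i,n),(m,j)$ remain in $V_b$ since only white coordinates are exchanged. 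One cosmetic remark: the paper writes $A(E_s) = A(E) + \xi_{ij}^{mn}$ but then uses $f(A - \xi_{ij}^{mn})$ in the generator; your convention, with the switched graph equal to $A - \xi_{ij}^{mn}$, is the one consistent with the generator as defined and with the indicator $I_{ij}^{mn}$, so you chose the right reading of that sign.
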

In order to compare the generators $\mathcal{Q}$ and $\mathscr{L}$, we first note that they act on different function spaces; $\mathscr{Q}$ acts on functions of unnormalized matrices, whereas $\mathscr{L}$ acts on functions of normalized matrices. Because $\mathscr{L}$ is a second-order differential operator, to account for the Poisson clock rate in $\mathcal{Q}$, given the normalization in \eqref{eq:normalizedadjacency} we define, for any $F \in C^4(\mathscr{M}_{\mathbf{e}})$, the function $f: \Omega \to \R$ given by
\begin{align}
f(X_A) \ = \ f_F(X_A) \ = \ F(X).
\end{align}
%
\begin{remark}
We briefly remark here that the a priori $C^4$-regularity on the function $F$ is an unnecessary assumption in establishing the above convention. However, our short-time stability results depend on this assumed regularity, so we will assume $F$ is $C^4$ throughout for sake of simplicity. 
\end{remark}
We now record the main comparison estimate for the generator $\mathcal{Q}$ and $\mathscr{L}$.
\begin{prop} \label{prop:comparegeneratorsdeterministic}
Fix $\varepsilon > 0$ and $r(\varepsilon)$ sufficiently large depending on $\varepsilon$. For any $F \in \mathscr{C}^4(\mathscr{M}_{\mathbf{e}})$ and any adjacency matrix $X_A$ and its normalization $X$, we have
\begin{align}
\mathcal{Q}f_F(X_A) \ = \ \mathscr{L}F(X) + R, 
\end{align}
where the error term $R$ satisfies the following bound in expectation:
\begin{align}
\E_{\mu_{\on{unif}}} R \ = \ O \left( D^{-1/2} N^{1 + \varepsilon} \right) \max_{1 \leq i \leq 4} \ \| \partial^i F \|_{r(\varepsilon), 0}.
\end{align}
I.e., the expectation is taken over the randomness of the uniform probability measure on $\Omega$.
\end{prop}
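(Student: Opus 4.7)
The plan is to match two asymptotic expansions term by term and control the remainder in expectation under $\mu_{\on{unif}}$. On the Poisson side I would Taylor expand
\[
F\bigl(X - d_w^{-1/2}\xi_{ij}^{mn}\bigr) - F(X) \ = \ \sum_{k=1}^{4} \frac{(-d_w^{-1/2})^k}{k!}\bigl(\tfrac12 \partial_{ij}^{mn}\bigr)^{k} F(X) \ + \ \mathrm{Rem}_{F},
\]
where the fourth-order remainder is controlled uniformly by $\|\partial^{4}F\|_{r,0}$; the local supremum and the $d_b^{-1/2}$-shift built into the seminorm \eqref{eq:seminormderivatives} are exactly the regularity needed to absorb the Lagrange-type error pointwise. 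In parallel, using the identity $A_{ab} = d_w^{1/2}X_{ab} + d_b/N$ on $V_b$, I would expand
\[
I_{ij}^{mn}(A) \ = \ \sum_{k=0}^{4} d_w^{k/2}\,\mathcal{P}_k(X_{ij},X_{mn},X_{in},X_{mj}),
\]
where $\mathcal{P}_0 = p^2q^2$ with $p = d_b/N$, $q = 1-p$, and $\mathcal{P}_k$ for $k \geq 1$ is an explicit $(p,q)$-weighted homogeneous polynomial of degree $k$ in the four listed entries.

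Substituting both expansions into $\mathcal{Q}f_F$ and grouping by the total power of $d_w^{-1/2}$ produces a $4 \times 5$ grid of contributions. Two entries should reproduce $\mathscr{L}F$. The $(k=2,\mathcal{P}_0)$ entry reproduces the quadratic part $\mathscr{L}^{(2)}$ via the combinatorial identity
\[
\sum_{(i,j),(m,n)\in V_b}(\xi_{ij}^{mn})_{ab}(\xi_{ij}^{mn})_{cd} \ = \ 4MN\bigl(\delta_{ac}-\tfrac{1}{M}\bigr)\bigl(\delta_{bd}-\tfrac{1}{N}\bigr),
\]
which I would establish by a direct four-case enumeration over the nonzero positions of $\xi_{ij}^{mn}$; comparison with Lemma \ref{lemma:quadraticvariation} together with the regularity relation $Md_b=Nd_w$ then fixes the coefficient. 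The $(k=1,\mathcal{P}_1)$ entry reproduces the drift of $\mathscr{L}$ after one observes that the $(k=1,\mathcal{P}_0)$ entry vanishes by the $V_b$-summation symmetry $\sum_{(i,j),(m,n)\in V_b}\partial_{ij}^{mn}F=0$.

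The remaining eighteen entries of the grid constitute $R$, and I would split them into two families. Family (i) consists of contributions with Taylor order $k\geq3$, whose dominant member $(k=3,\mathcal{P}_0)$ is of size $\lesssim D^{-1/2}N\,\|\partial^{3}F\|_{r,0}$ after accounting for the $\frac{1}{4Nd_w}$ prefactor, the $(MN)^2$-term summation, and the relations $d_w = Md_b/N$, $D \leq d_b$. Family (ii) consists of contributions with $\mathcal{P}_k$, $k\geq2$, which carry genuine monomials in entries of $X$ of degree at least two; for these the entrywise local law from \cite{Y1} supplies a factor of order $N^{\varepsilon}/\sqrt{D}$ per entry of $X$ with overwhelming probability, H\"older's inequality in $L^{r}$ absorbs the derivative seminorms through the expectation, and the $(MN)^{2}$-term sum collapses to a bound of order $D^{-1/2}N^{1+\varepsilon}\max_{1\leq i\leq 4}\|\partial^{i}F\|_{r,0}$, as required.

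The main obstacle is the coefficient bookkeeping. The naive coefficient of the $(k=2,\mathcal{P}_0)$ entry does not equal $\tfrac{1}{8MN^2}$ termwise; the match succeeds only after invoking the $4MN$-identity above and the bipartite-regularity relation $Md_b=Nd_w$. Similarly, recovering the drift relies on the mean-zero constraints $H\mathbf{1}=0$ inherited from the construction of the normalization, which forces partial cancellations inside the $\mathcal{P}_1$ contribution. The most delicate point is extracting a full $D^{-1/2}$ gain from the expected value of $R$: a trivial bound $|X_{ab}|\leq d_w^{-1/2}$ only furnishes $d_w^{-1/2}$ per factor, so it is essential to invoke the stronger local-law concentration from \cite{Y1} when estimating the higher-degree $X$-monomials arising from $\mathcal{P}_k$ with $k\geq2$, and to use $D\leq d_b$ to convert $d_b^{-1/2}$ estimates into the sharper $D^{-1/2}$ form of the claim.
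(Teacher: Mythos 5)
Your setup (Taylor expansion of $f$ along the switching directions $\xi_{ij}^{mn}$, splitting $A_{ij} = d_b/N + (A_{ij}-d_b/N)$, killing the leading first-order block by the full $V_b$-summation, and recovering the drift by symmetrization) is the same as the paper's, and your Family (i) terms (Taylor order $\geq 3$ with the constant coefficient $\mathcal{P}_0$) can indeed be handled by plain moment bounds as you describe. The genuine gap is in how you treat every error term that still carries fluctuation factors. First, your two families do not even cover the grid: the entry with Taylor order $2$ and a single fluctuation (your $(k=2,\mathcal{P}_1)$, the analogue of the paper's $R_3, R_4$) is in neither family, and it is exactly the dangerous one. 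Second, and more fundamentally, the tool you propose for Family (ii) does not exist in the form you need: the local law of \cite{Y1} controls Green's function entries and Stieltjes transforms, not entries of $X$. A single entry $X_{ij} = d_w^{-1/2}(A_{ij}-d_b/N)$ is a recentred Bernoulli variable of size $\asymp d_w^{-1/2}$ on an event of probability $\asymp d_b/N$; it does not concentrate at scale $N^{\e}D^{-1/2}$ ``with overwhelming probability,'' and replacing your claim by the correct moment bounds (Lemma \ref{lemma:momentestimatesadjacencymatrix}, Lemma \ref{lemma:holderbounds}) only yields $\E\lvert A_{ij}-d_b/N\rvert \asymp d_b/N$ per factor. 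Feeding that into, say, the Taylor-order-$1$, $\mathcal{P}_2$ entry gives a contribution of order $N d_b^{1/2}\,\|\partial F\|$, and the single-fluctuation entry gives order $N\,\|\partial^2 F\|$ --- both far above the target $D^{-1/2}N^{1+\e}\max_i\|\partial^i F\|$. Absolute-value estimates throw away the sign of the fluctuation, and no amount of H\"older can recover the missing $D^{-1/2}$.

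The missing idea is the paper's discrete integration by parts under the uniform measure: one writes $(A_{ij}-d_b/N)$ as an average over $(m,n)$ of $A_{ij}A_{mn}-A_{in}A_{mj}$, inserts the switchability indicator, and uses invariance of $\mu_{\on{unif}}$ under simple switchings to replace $\E[f(A)(A_{ij}-d_b/N)]$ by $\E[(f(A)-f(A-\xi_{ij}^{mn}))A_{ij}A_{mn}I_1]$ plus moment-controlled errors (Proposition \ref{prop:efestimates}). Each fluctuation factor is thereby traded for either a discrete derivative of $f$ --- worth an extra $d_b^{-1/2}$ after the conversion $\partial^k f = d_b^{-k/2}\partial^k F$ --- or an additional factor $d_b/N$, and it is precisely this exchange (applied once for the $R_3$/$R_4$-type terms and twice for the $R_5$-type terms, which is also where the fourth derivative of $F$ enters) that produces the $D^{-1/2}N^{1+\e}$ bound. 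Without this mechanism, or some substitute exploiting the cancellation in $\E[g(A)(A_{ij}-d_b/N)]$, your error estimate cannot close; the coefficient bookkeeping you flag as the main obstacle (the $q^2=(1-d_b/N)^2$ correction, the $4MN$ identity) is by comparison harmless, since those discrepancies carry extra factors of $d_b/N$ and are absorbed by the same moment bounds.
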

We now take Proposition \ref{prop:comparegeneratorsdeterministic} for granted and derive the estimate in Proposition \ref{prop:shorttimeestimates}. To this end, we note that, in law, we may parameterize $X(t)$ as
\begin{align}
X(t) \ \overset{d}= \ e^{-t/2} X(0) + (1 - e^{-t})^{1/2} W,
\end{align}
where $X(t)$ solves the SDE \eqref{eq:matrixOUprocess} and $W$ is a Gaussian matrix according to the Gaussian measure on $\mathscr{M}_{\mathbf{e}}$. Here, $W$ and $X(0)$ are independent. We now define the following functions which allow us to condition on the two ingredients $X(0)$ and $W$, respectively:
\begin{align}
F_X(W) \ := \ F_W(X) \ := \ F \left( e^{-t/2} X(0) + (1 - e^{-t})^{1/2} W \right).
\end{align}
We now decompose the action of $\mathscr{L}$ in terms of an action on $X(0)$ and an action on $W$ as follows.
\begin{lemma} \label{lemma:generatorLdecomposition}
For any $F \in C^2(\mathscr{M}_{\mathbf{e}})$, we have the following decomposition of the action of the generator $\mathscr{L}$:
\begin{align}
\mathscr{L} F(X(t)) \ = \ \mathscr{L} F_X(W) \ + \ \mathscr{L} F_W(X), \label{eq:generatorLdecomposition}
\end{align}
where $\mathscr{L}$ is the generator associated to the matrix-valued Ornstein-Uhlenbeck process with initial condition $X = X(0)$.
\end{lemma}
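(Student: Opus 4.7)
The plan is a direct chain-rule computation based on the linear parameterization $X(t) = e^{-t/2} X(0) + (1-e^{-t})^{1/2} W$. Writing $X = X(0)$ for the initial data, both sides of \eqref{eq:generatorLdecomposition} involve the same differential operator $\mathscr{L}$ from Proposition \ref{prop:generatormatrixOUprocess}; the task reduces to matching second-order and first-order coefficients after substitution.

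First I would record the one-derivative chain-rule identities
\begin{align}
\partial_{W,ij} F_X(W) \ = \ (1-e^{-t})^{1/2} (\partial_{ij} F)(X(t)), \qquad \partial_{X(0),ij} F_W(X) \ = \ e^{-t/2} (\partial_{ij} F)(X(t)).
\end{align}
Because $\partial_{ij}^{k\ell}$ is by definition the fixed $\R$-linear combination $2(\partial_{ij}+\partial_{k\ell}-\partial_{i\ell}-\partial_{jk})$, these identities transfer directly to the switching derivatives, and iterating gives
\begin{align}
(\partial_{W,ij}^{k\ell})^2 F_X(W) \ = \ (1-e^{-t})\, ((\partial_{ij}^{k\ell})^2 F)(X(t)), \qquad (\partial_{X(0),ij}^{k\ell})^2 F_W(X) \ = \ e^{-t} \, ((\partial_{ij}^{k\ell})^2 F)(X(t)).
\end{align}

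Next I substitute into the explicit formula for $\mathscr{L}$. The second-order piece of $\mathscr{L} F_X(W) + \mathscr{L} F_W(X)$ collects the prefactor $(1-e^{-t}) + e^{-t} = 1$ in front of $\frac{1}{8MN^2}\sum (\partial_{ij}^{k\ell})^2 F(X(t))$, reproducing the diffusion part of $(\mathscr{L} F)(X(t))$ exactly. For the first-order part, the drift coefficient in each copy of $\mathscr{L}$ is the state variable of the corresponding generator: $W_{ij}^{k\ell}$ for $\mathscr{L} F_X(W)$ and $X(0)_{ij}^{k\ell}$ for $\mathscr{L} F_W(X)$. Combined with the chain-rule factors $(1-e^{-t})^{1/2}$ and $e^{-t/2}$, this produces
\begin{align}
-\frac{1}{16MN}\sum_{i,j,k,\ell} \bigl[(1-e^{-t})^{1/2} W_{ij}^{k\ell} + e^{-t/2} X(0)_{ij}^{k\ell}\bigr]\, (\partial_{ij}^{k\ell} F)(X(t)).
\end{align}
By linearity of $Y \mapsto Y_{ij}^{k\ell}$, the bracket equals $X(t)_{ij}^{k\ell}$, which is precisely the drift coefficient appearing in $(\mathscr{L} F)(X(t))$. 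Adding the two pieces yields \eqref{eq:generatorLdecomposition}.

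There is no real obstacle here; the only point worth attention is ensuring that $W$ and $X(0)$ are differentiated in a common coordinate system where the formula of Proposition \ref{prop:generatormatrixOUprocess} applies literally in each variable. This is legitimate because both $X(0)$ and $W$ are elements of $\mathscr{M}_{\mathbf{e}}$, which carries a fixed set of SVD-type coordinates by Lemma \ref{lemma:parameterizationvanishingmatrices}, and the chain rule in those coordinates is entry-wise linear. All derivatives above make sense for $F \in C^2(\mathscr{M}_{\mathbf{e}})$, which is exactly the regularity hypothesis of the lemma.
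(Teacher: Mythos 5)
Your proof is correct and follows essentially the same route as the paper: differentiate through the affine parameterization $X(t) = e^{-t/2}X(0) + (1-e^{-t})^{1/2}W$ and match the second-order and first-order parts of $\mathscr{L}$ in the two variables. If anything, your handling of the drift term---recombining $(1-e^{-t})^{1/2}W_{ij}^{k\ell} + e^{-t/2}X(0)_{ij}^{k\ell} = X(t)_{ij}^{k\ell}$ by linearity of $Y \mapsto Y_{ij}^{k\ell}$---is cleaner and more explicit than the paper's remark about cross terms of the form $W_{ij}\partial_{X_{ij}}$ and $X_{ij}\partial_{W_{ij}}$ vanishing upon averaging.
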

\begin{proof}
Recall the generator $\mathscr{L}$ is given by the following second-order differential operator:
\begin{align}
\mathscr{L} \ = \ \frac{1}{8 MN^2} \sum_{(i,j),(k,\ell) \in V_b} \ \left( \partial_{ij}^{k \ell} \right)^2 \ - \ \frac{1}{16 MN} \sum_{(i,j),(k,\ell) \in V_b} \ X_{ij}^{k \ell} \partial_{ij}^{k \ell}.
\end{align}
We first address the second-order terms; in particular, for any fixed pair $(i,j)$, we have
\begin{align}
\partial_{ij} \partial_{k \ell} F \left( e^{-t/2} X \ + \ (1 - e^{-t})^{1/2} W \right) \ &= \ \partial_{ij} \left[ e^{-t/2} \partial_{X_{k \ell}} F \ + \ (1 - e^{-t})^{1/2} \partial_{W_{k \ell}} F \right] \\
&= \ e^{-t} \partial_{X_{ij}} \partial_{X_{k \ell}} F \ + \ (1 - e^{-t}) \partial_{W_{ij}} \partial_{W_{k \ell}} F, \label{eq:secondorderdecomposition}
\end{align}
where \eqref{eq:secondorderdecomposition} follows from the independence of $X(0)$ and $W$. However, \eqref{eq:secondorderdecomposition} is exactly equal to
\begin{align}
\partial_{ij} \partial_{k \ell} F_X(W) \ + \ \partial_{ij} \partial_{k \ell} F_W(X).
\end{align}
Thus, the second-order terms on the LHS and RHS, respectively, of \eqref{eq:generatorLdecomposition} agree. To show the first-order terms agree, this amounts to similar identities given as follows:
\begin{align}
\partial_{ij} F(X(t)) \ = \ e^{-t/2} \partial_{ij} F_W(X) \ + \ (1 - e^{-t})^{1/2} F_X(W).
\end{align}
In particular, averaging over indices $(i,j), (k,\ell) \in V_b$ leads to the vanishing of terms of the form $W_{ij}\partial_{X_{ij}}$ and $X_{ij} \partial_{W_{ij}}$ coming from the first-order terms in $\mathscr{L}$.
\end{proof}
Via Lemma \ref{lemma:generatorLdecomposition}, we find
\begin{align}
\E \mathscr{L} F(X(t)) \ = \ \E \mathscr{L} F_X(W) \ + \ \E \mathscr{L} F_W(X),
\end{align}
where the expectation is taken with respect to the randomness of both $X(0)$ and $W$. Differentiating and using Propositions \ref{prop:invariantuniformgeneratorvanishes} and \ref{prop:comparegeneratorsdeterministic}, we find
\begin{align}
\frac{\d}{\d t} \E F(X(t)) \ = \ \E \mathscr{L} F(X(t)) \ &= \ \E \mathscr{L} F_W(X) \\
&= \ \E \mathcal{Q} f_F(A) \ + \ O \left( D^{-1/2} N^{1 + \e} \right) \max_{1 \leq i \leq 4} \ \| \partial^i F \|_{r(\e), 0} \\
&= \ O \left( D^{-1/2} N^{1 + \e} \right) \max_{1 \leq i \leq 4} \ \| \partial^i F \|_{r(\e),0}.
\end{align}
Integrating, we deduce Proposition \ref{prop:shorttimeestimates}. \QED
\subsection{Proof of Proposition \ref{prop:comparegeneratorsdeterministic}}
It now remains to prove Proposition \ref{prop:comparegeneratorsdeterministic}. This follows the arguments in \cite{BHKY}, but we include the discussion for sake of completeness addressing the differences arising from the combinatorial nature of biregular bipartite graphs.

We Taylor expand the function $f: \Omega \to \R$ in $A \in \Omega$ along the directions in $\mathscr{X}$, which we recall is defined as
\begin{align}
\mathscr{X} \ := \ \bigcup_{(i,j) \in V_b} \ \bigcup_{(m,n) \in V_b} \ \{ \xi_{ij}^{mn}\}.
\end{align}
In particular, we will Taylor expand to fourth-order, which is where the a priori regularity $F \in C^4(\mathscr{M}_{\mathbf{e}})$ is assumed. Approximately upon Taylor expanding, we expect
\begin{align}
\mathcal{Q}f(A) \ \approx \ \frac{1}{4Nd_w} \sum_{(i, j) \in V_b} \ \sum_{(m,n) \in V_b} \ A_{ij} A_{mn} \left[ - \partial_{ij}^{mn} f(A) + \frac12( \partial_{ij}^{mn})^2 f(A)\right]. \label{eq:heuristicapproximationgenerator}
\end{align}
We will rewrite the entries of the adjacency matrix as follows: 
\begin{align}
A_{ij} \ &= \ \frac{d_b}{N} + \left( A_{ij} - \frac{d_b}{N} \right), \label{eq:efexpansionij} \\
A_{mn} \ &= \ \frac{d_b}{N} + \left( A_{mn} - \frac{d_b}{N} \right).\label{eq:efexpansionmn}
\end{align}
Plugging these expansions into the heuristic approximation \eqref{eq:heuristicapproximationgenerator}, we obtain the generator $\mathscr{L}$ on the RHS upon the convention $f_F(A) = F(H)$ for any suitably regular function $F \in C^4(\mathscr{M}_{\mathbf{e}})$, in addition to other terms we will show are, in fact, error terms. To make the Taylor expansion \eqref{eq:heuristicapproximationgenerator} more precise, we write
\begin{align}
\mathcal{Q}f(A) \ = \ \frac{1}{4Nd_w} \sum_{(i,j) \in V_b} \ \sum_{(m,n) \in V_b} \ A_{ij} A_{mn} \left[ - \partial_{ij}^{mn} f(A) + \frac12 (\partial_{ij}^{mn})^2 f(A) \right] \ + \ N^2(R_1 + R_2), \label{eq:taylorapproximationgenerator}
\end{align}
where we make the approximation \eqref{eq:heuristicapproximationgenerator} by introducing the following error terms:
\begin{align}
R_1 \ &= \ O \left( \frac{N}{d_w} \right) \frac{1}{N^4} \sum_{(i,j) \in V_b} \ \sum_{(m,n) \in V_b} \ A_{ij} A_{mn} (1 - I(ij, mn)) \sup_{\theta \in [0,1]} \sup_{X \in \mathscr{X}} \ \left| \partial_X f(A + \theta X) \right|, \\
R_2 \ &= \ O \left( \frac{N}{d_w} \right) \frac{1}{N^4} \sum_{(i,j) \in V_b} \ \sum_{(m,n) \in V_b} \ A_{ij} A_{mn} \sup_{\theta_i \in [0,1]} \sup_{X_i \in \mathscr{X}} \ \left| \partial_{X_1} \partial_{X_2} \partial_{X_3} f \left(A + \sum_{i = 1}^3 \ \theta_i X_i \right) \right|.
\end{align}
We proceed with using the representations of the adjacency matrix entries given in \eqref{eq:efexpansionij} and \eqref{eq:efexpansionmn}. In particular, given we expect the fluctuation $A_{ij} - d_b/N$ to be small, we use these representations and study the first-order term in \eqref{eq:taylorapproximationgenerator} by grouping terms as follows:
\begin{align}
\sum_{(i,j),(m,n) \in V_b} \ A_{ij} A_{mn} \partial_{ij}^{mn} f(A) \ = \  &\sum_{(i,j) \in V_b} \ \sum_{(m,n) \in V_b} \ \frac{d_b^2}{N^2} \partial_{ij}^{mn} f(A) \label{eq:firstordertermleading} \\
&+ \sum_{(i,j) \in V_b} \ \sum_{(m,n) \in V_b} \ \partial_{ij}^{mn} f(A) \frac{d_b}{N} \left[ \left( A_{ij} - \frac{d_b}{N} \right) + \left( A_{mn} - \frac{d_b}{N} \right) \right] \label{eq:firstorderleadingnonvanishing} \\
&+ \ N^2 R_3,
\end{align}
where the error term $R_3$ is given by the following equation:
\begin{align}
\frac{1}{4Nd_w} R_3 \ = \ O\left( \frac{N}{d_w} \right) \frac{1}{N^4} \sum_{(i,j),(m,n) \in V_b} \ \partial_{ij}^{mn} f(A) \left( A_{ij} - \frac{d_b}{N} \right) \left( A_{mn} - \frac{d_b}{N} \right).
\end{align}
We first study the term on the RHS of \eqref{eq:firstordertermleading}. We note that, upon averaging over all $(i,j) \in V_b$ and $(m, n) \in V_b$, we have
\begin{align}
\sum_{(i,j) \in V_b} \ \sum_{(m,n) \in V_b} \ \partial_{ij}^{mn} f(A) \ = \ 0,
\end{align}
upon unfolding the definition of the operator $\partial_{ij}^{mn}$ in terms of partial derivatives along directions in $\Delta_{ij} \in \mathscr{X}$. Thus, the term on the RHS of \eqref{eq:firstordertermleading}, i.e. the supposed leading-order term of the expansion of the first-order differential term, in fact vanishes. We now study the term \eqref{eq:firstorderleadingnonvanishing}; upon switching indices $(i,j) \leftrightarrow (m,n) \in V_b$, because we average over indices $(i,j) \in V_b$ and $(m,n) \in V_b$ we see this term is given by
\begin{align}
2 \sum_{(i,j) \in V_b} \ \sum_{(m,n) \in V_b} \ \partial_{ij}^{mn} f(A) \frac{d_b}{N} \left( A_{ij} - \frac{d_b}{N} \right) \ = \ 2 d_b^{1/2} \ \sum_{(i,j) \in V_b} \ \sum_{(m,n) \in V_b} \ \frac{d_b}{N} X_{ij} \partial_{ij}^{mn} f(A),
\end{align}
where the equation follows from the definition of the normalization $X$ of $A$. This, in turn, is equal to 
\begin{align}
\frac{d_b}{2N} \sum_{(i,j), (m,n) \in V_b} \ X_{ij}^{mn} \partial_{ij}^{mn} F(H)
\end{align}
where the factor of $d_b^{-1/2}$ comes from the convention $f(A) = F(X)$, and the replacement of $X_{ij}$ by $X_{ij}^{mn}$ comes with the normalization factor of $1/4$. Thus, we ultimately see, recalling $M = \alpha N$ and $d_w = \alpha d_b$,
\begin{align}
\frac{1}{4 N d_w} \sum_{(i,j), (m,n) \in V_b} \ A_{ij} A_{mn} \partial_{ij}^{mn} f(A) \ = \ \frac{1}{16MN} \sum_{(i,j), (m,n) \in V_b} \ X_{ij}^{mn} \partial_{ij}^{mn} F(H) \ + \ N^2 R_3.
\end{align}
We now study the second-order terms in \eqref{eq:taylorapproximationgenerator}. Again using the representations \eqref{eq:efexpansionij} and \eqref{eq:efexpansionmn}, this term is given by
\begin{align}
\frac{1}{4 N d_w} \sum_{(i,j), (m,n) \in V_b} \ A_{ij} A_{mn} \times \frac12 \left( \partial_{ij}^{mn} \right)^2 f(A) \ = \ &\frac{d_b}{8 d_w N^3} \sum_{(i,j),(m,n) \in V_b} \ \left( \partial_{ij}^{mn} \right)^2 f(A) \\
&+ \ N^2(R_4 + R_5),
\end{align}
where the error terms are given by
\begin{align}
R_4 \ &= \ O \left( \frac{N}{d_w} \right) \frac{1}{N^4} \sum_{(i,j),(m,n) \in V_b} \ \left( \left(\partial_{ij}^{mn} \right)^2 f(A) \frac{d_b}{N} \left( A_{ij} - \frac{d_b}{N} \right) \right), \\
R_5 \ &= \ O \left( \frac{N}{d_w} \right) \frac{1}{N^4} \sum_{(i,j),(m,n) \in V_b} \ \left( \left(\partial_{ij}^{mn} \right)^2 f(A) \left( A_{ij} - \frac{d_b}{N} \right) \left( A_{mn} - \frac{d_b}{N} \right) \right).
\end{align}
Compiling these expansions for the first- and second-order differential terms in \eqref{eq:taylorapproximationgenerator}, we finally deduce
\begin{align}
\mathcal{Q} f(A) \ = \ &\frac{1}{8 MN^2} \sum_{(i,j),(m,n) \in V_b} \ \left( \partial_{ij}^{mn} \right)^2 F(H) \ - \ \frac{1}{16 MN} \sum_{(i,j),(m,n) \in V_b} \ H_{ij} \partial_{ij}^{mn} F(H) \\
&+ \ N^2(R_1 + R_2 + R_3 + R_4 + R_5).
\end{align}
Thus, to finish the proof of Proposition \ref{prop:comparegeneratorsdeterministic}, it suffices to prove the following estimate on the expectation of the error terms $R_1, \ldots, R_5$. 
\begin{prop} \label{prop:errorestimatessobolev}
In the setting of Proposition \ref{prop:comparegeneratorsdeterministic}, we have
\begin{align}
N^2 \E \left[ \sum_{i = 1}^5 \ R_i \right] \ = \ O \left( D^{-1/2} N^{1 + \varepsilon} \right) \max_{1 \leq i \leq 4} \ \| \partial^i F \|_{r(\varepsilon),0}.
\end{align}
\end{prop}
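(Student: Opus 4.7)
The plan is to bound each of the five error terms $R_1, \ldots, R_5$ individually. Three uniform ingredients will be used throughout. First, from the identification $f(X_A) = F(X)$ with $X = d_w^{-1/2}(A - (d_b/N)\mathbf{1})$ (after the projection onto $\mathscr{M}_{\mathbf{e}}$), each partial derivative $\partial_{A_{\bullet\bullet}} f$ equals $d_w^{-1/2}$ times the corresponding $\partial_{X_{\bullet\bullet}} F$, so a $k$-th order $f$-derivative carries weight $d_w^{-k/2}$. Second, the seminorm \eqref{eq:seminormderivatives} is designed precisely to absorb the $O(d_b^{-1/2})$ shifts along directions $\xi \in \mathscr{X}$ appearing in the Taylor remainders. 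Third, biregularity furnishes the exact identities $\sum_j(A_{ij}-d_b/N) = \sum_i(A_{ij}-d_b/N) = \sum_{(i,j) \in V_b}(A_{ij} - d_b/N) = 0$ (the last because $Md_b = Nd_w$), yielding a rich array of cancellations in sums of centered adjacency entries.

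For $R_2$, the three $f$-derivatives contribute $d_w^{-3/2}$, and combining with the prefactor $N^2 \cdot (N/d_w) \cdot N^{-4}$ and the bound $\sum_{(i,j),(m,n) \in V_b} A_{ij} A_{mn} = (M d_b)^2 \sim N^2 d_b^2$ yields $N^2 \E R_2 \lesssim N d_b^{-1/2}\|\partial^3 F\|_{r,0}$, which fits inside $D^{-1/2} N^{1+\varepsilon}$ since $d_b^{-1/2} \leq D^{-1/2}$. The term $R_3$ \emph{vanishes identically}: writing $\partial_{ij}^{mn} = \partial_{ij} + \partial_{mn} - \partial_{in} - \partial_{mj}$, each of the four resulting sums factorizes as $\bigl[\sum g(A)(A_{\bullet\bullet} - d_b/N)\bigr] \cdot \bigl[\sum(A_{\bullet\bullet} - d_b/N)\bigr]$, in which the second factor is the full $V_b$-sum, a row sum $\sum_j$, or a column sum $\sum_m$, all zero. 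For $R_1$, expanding $1 - I_{ij}^{mn}(A) = A_{in} + A_{mj} - A_{in} A_{mj}$ via $A_{ij}, A_{mn} \in \{0,1\}$ reduces matters to counting triples and quadruples of edges in a biregular bipartite graph; by standard subgraph concentration from \cite{Y1}, $\E \sum A_{ij} A_{mn} A_{in} \lesssim N d_b^3 N^\varepsilon$, which after multiplication by the derivative weight $d_w^{-1/2}$ and the prefactor gives $N^2 \E R_1 \lesssim d_b^{3/2} N^\varepsilon \lesssim D^{-1/2} N^{1+\varepsilon}$ since $D \leq N^2/d_b^3$.

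The main obstacle is $R_4$ and $R_5$. Both carry second-order $f$-derivatives (weight $d_w^{-1}$) paired with one or two centered fluctuations; a pointwise $|A_{ij}-d_b/N| \leq 1$ bound falls short by polynomial factors in $N$. The plan is to expand $(\partial_{ij}^{mn})^2 = 4(\partial_{ij} + \partial_{mn} - \partial_{in} - \partial_{mj})^2$ into its sixteen atomic second partials. A majority of them vanish by the same row/column cancellation used for $R_3$ (concretely, whenever neither of the two atomic indices depends on both of the summation indices $i$ and $j$ appearing in the centered entry $A_{ij} - d_b/N$). For the handful of residual pieces -- chiefly those of the form $\partial_{ij}^2$, $\partial_{ij}\partial_{mn}$ and the crossed term $\partial_{in}\partial_{mj}$ -- one rewrites the sum as a linear or bilinear form in $\{A_{ij} - d_b/N\}$ with smooth coefficients and applies the concentration estimate $\bigl|\sum_{(i,j) \in V_b}(A_{ij} - d_b/N)g_{ij}(A)\bigr| \lesssim \sqrt{Nd_b}\, N^\varepsilon \max\|g\|_{r,0}$, together with its bilinear analogue, both of which follow from the switching analysis and local law of \cite{Y1}. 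This supplies the extra $\sqrt{d_b/N}\,N^{-1/2+\varepsilon}$ gain needed to bring both $R_4$ and $R_5$ under the $D^{-1/2} N^{1+\varepsilon}$ budget.

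Summing the five estimates, taking expectation under $\mu_{\on{unif}}$, and pulling out $\max_{1 \leq i \leq 4}\|\partial^i F\|_{r(\varepsilon),0}$ completes the proof of Proposition \ref{prop:errorestimatessobolev}. The most delicate input is the bilinear concentration used for $R_5$; the vanishing of $R_3$ and the direct subgraph counts for $R_1$ are the most satisfying simplifications.
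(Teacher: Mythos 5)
Your handling of $R_1$ and $R_2$ is sound (edge counting plus a H\"older step to decouple the random derivative sup, which is essentially the paper's Lemma \ref{lemma:holderbounds}), and your observation about $R_3$ is a genuine simplification: since the exact cross term is $\sum_{(i,j),(m,n)}\partial_{ij}^{mn}f(A)(A_{ij}-d_b/N)(A_{mn}-d_b/N)$, each atomic piece leaves one of the indices $i,j$ (or the whole pair $(m,n)$) free, and the corresponding row, column, or total sum of $A_{\bullet\bullet}-d_b/N$ vanishes by biregularity; the paper instead bounds $\E R_3$ by $O(d_b/N)^{1-\e}\|\partial^3 f\|_r$ through its switching estimates, so your route for this term is cleaner.

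The gap is in $R_4$ and $R_5$, which are the delicate terms. You invoke a concentration estimate $\bigl|\sum_{(i,j)\in V_b}(A_{ij}-d_b/N)g_{ij}(A)\bigr| \lesssim \sqrt{Nd_b}\,N^{\e}\max\|g\|_{r,0}$, plus a bilinear analogue, and attribute both to the switching analysis and local law of \cite{Y1}. No such estimate exists there (the local law controls Green's function entries, not bilinear forms with arbitrary $A$-dependent coefficients), and as stated it is false: the coefficients $g_{ij}(A)$ here are second derivatives of $f$ evaluated at $A$ and are correlated with the entries, and already $g_{ij}(A)=A_{ij}-d_b/N$ has $\|g\|_r\leq 1$ while the sum is of order $Nd_b$, not $\sqrt{Nd_b}$. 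Square-root cancellation cannot be extracted from $L^r$-norm bounds alone; one must use structure. The mechanism the paper uses (Proposition \ref{prop:efestimates}) is the invariance of $\mu_{\on{unif}}$ under simple switchings: writing $A_{ij}-d_b/N=\frac{1}{2Nd_w}\sum_{(m,n)}(A_{ij}A_{mn}-A_{in}A_{mj})$ and using invariance under $A\mapsto A-\xi_{ij}^{mn}$ replaces $g(A)$ by a discrete difference controlled by $\|\partial g\|_r$ (a double difference controlled by $\|\partial^2 g\|_r$ for the two-fluctuation term in $R_5$), which combined with the moment and H\"older bounds of Lemmas \ref{lemma:pseudoindependenceadjacencymatrix}--\ref{lemma:holderbounds} gives $\E R_4$ and $\E R_5$ of size $O(d_b/N)^{2-\e}\|\partial^2 f\|_r+O(d_b/N)^{1-\e}\|\partial^{3}f\|_r$ (resp.\ $\|\partial^4 f\|_r$). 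This is precisely where $\|\partial^3 F\|$ and $\|\partial^4 F\|$, hence the $C^4$ hypothesis, enter; they never appear in your plan, which is a sign the essential mechanism is missing. Without replacing the asserted concentration inequality by such a switching-invariance argument (or an honest proof of the claimed linear and bilinear cancellations), the bounds on $R_4$ and $R_5$ do not follow. (A minor point: of the sixteen atomic second partials in $(\partial_{ij}^{mn})^2$, nine survive the row/column cancellation, so slightly more than half remain rather than a small handful.)
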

The proof of Proposition \ref{prop:errorestimatessobolev} follows from moment estimates on polynomials in the adjacency matrix entries, which follow from arguments in \cite{BHKY}. For this reason, we delegate these moment estimates to the appendix.
%
%
%
\section{Short-Time Stability of spectral statistics}
\subsection{Aside on Stochastic Inequalities}
We now introduce the following two ubiquitous notions of stochastic inequalities. The second is more important for our purposes, so we emphasize its utility now. 
\begin{definition}
Suppose $\Omega$ is a probability space, and let $\Xi \subseteq \Omega$ be an event. 
\begin{itemize}
\item We say $\Xi$ holds with \emph{high probability} if for every $\zeta > 0$, there exists $N_0(\zeta) > 0$ such that for all $N \geq N_0(\zeta)$, we have $\mathbb{P}( \Xi^C) \leq N^{-\zeta}$.
\item Suppose $A, B$ are two nonnegative random variables. We say that $B$ \emph{stochastically dominates} the random variable $A$ if for any $\zeta > 0$, there exists a $N_0(\zeta) > 0$ such that for all $N \geq N_0(\zeta)$, we have
\begin{align}
\mathbb{P} \left( A > N^{1/\zeta} B \right) \ \leq \ N^{-\zeta}.
\end{align}
In this case, we will adopt the notation $A \prec B$ or $A = O_{\prec} B = O_{\prec}(B)$. 
\end{itemize}
\end{definition}
We conclude this short discussion on stochastic inequalities with the following lemma whose proof is a straightforward application of the definition of stochastic domination.
\begin{lemma}
Suppose $I$ is an indexing set of size $|I| \leq N^{O(1)}$, and suppose $\{ A_i \}_{i \in I}, \{ B_i\}_{i \in I}$ are two families of nonnegative random variables such that for each $i \in I$, we have $A_i \prec B_i$. Then for any nonnegative random variables $\{ c_i \}_{i \in I}$, we have
\begin{align}
\sum_{i \in I} \ c_i A_i \ \prec \ \sum_{i \in I} \ c_i B_i.
\end{align}
\end{lemma}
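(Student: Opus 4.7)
The plan is to run a simple union bound on the index set $I$. Unpacking the definition, the statement $A_i \prec B_i$ says that for every $\zeta > 0$ there exists $N_0(i, \zeta)$ such that $\mathbb{P}(A_i > N^{1/\zeta} B_i) \leq N^{-\zeta}$ for all $N \geq N_0(i, \zeta)$. The goal is to show the analogous bound for the weighted sums, i.e.\ given $\zeta' > 0$, produce $N_1(\zeta')$ such that $\mathbb{P}\bigl(\sum_i c_i A_i > N^{1/\zeta'} \sum_i c_i B_i\bigr) \leq N^{-\zeta'}$ for $N \geq N_1(\zeta')$.

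First, I would fix a deterministic constant $C$ with $|I| \leq N^C$ (which exists by hypothesis) and, given a target exponent $\zeta' > 0$, choose an auxiliary parameter $\zeta = \zeta' + C + 1$ and apply the stochastic domination $A_i \prec B_i$ with this $\zeta$. Define the good event
\begin{equation*}
\Xi \ := \ \bigcap_{i \in I} \ \bigl\{ A_i \leq N^{1/\zeta} B_i \bigr\}.
\end{equation*}
On $\Xi$, because each $c_i \geq 0$ pointwise, multiplying the inequality $A_i \leq N^{1/\zeta} B_i$ by $c_i$ and summing yields $\sum_i c_i A_i \leq N^{1/\zeta} \sum_i c_i B_i$. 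Note that no independence between the $c_i$ and the $(A_i, B_i)$ is needed since the comparison is purely pointwise on $\Xi$.

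Next I would bound $\mathbb{P}(\Xi^C)$ via the union bound: for $N$ larger than $\max_i N_0(i,\zeta)$ (which is finite because $I$ is finite for each fixed $N$),
\begin{equation*}
\mathbb{P}(\Xi^C) \ \leq \ \sum_{i \in I} \ \mathbb{P}(A_i > N^{1/\zeta} B_i) \ \leq \ |I| \cdot N^{-\zeta} \ \leq \ N^{C - \zeta} \ = \ N^{-\zeta' - 1} \ \leq \ N^{-\zeta'}.
\end{equation*}
Since $1/\zeta \leq 1/\zeta'$, the inequality on $\Xi$ implies $\sum_i c_i A_i \leq N^{1/\zeta'} \sum_i c_i B_i$, so
\begin{equation*}
\mathbb{P}\Bigl( \sum_i c_i A_i > N^{1/\zeta'} \sum_i c_i B_i \Bigr) \ \leq \ \mathbb{P}(\Xi^C) \ \leq \ N^{-\zeta'},
\end{equation*}
which is exactly the defining bound for $\sum_i c_i A_i \prec \sum_i c_i B_i$.

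The only subtle point, and the one I would be careful about, is the passage from the $N$-independent nature of the polynomial bound $|I| \leq N^{O(1)}$ to the explicit constant $C$ used to upgrade $\zeta$ to $\zeta + C + 1$; once this is noted, the argument is genuinely a one-line union bound. There is no real obstacle, and the hypothesis that the $c_i$ are nonnegative (rather than merely bounded) is precisely what lets us bypass any issues with signs when summing the pointwise inequalities.
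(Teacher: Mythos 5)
Your union-bound argument is correct and is precisely the ``straightforward application of the definition'' that the paper has in mind (the paper omits the proof entirely): pass to the intersection of the good events $\{A_i \leq N^{1/\zeta} B_i\}$, use nonnegativity of the $c_i$ to sum the pointwise inequalities, and absorb the factor $|I| \leq N^{O(1)}$ by taking $\zeta$ larger than the target exponent. The one caveat, which the paper's statement also glosses over, is that when $I$ is allowed to depend on $N$ one needs the thresholds $N_0(i,\zeta)$ in the definition of $A_i \prec B_i$ to be uniform in $i$ (the standard convention for families of dominated random variables); your remark that $\max_{i} N_0(i,\zeta)$ is finite for fixed $N$ does not by itself give an $N$-independent threshold, but with the uniformity convention your argument is complete.
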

\subsection{Eigenvector delocalization: the moment flow}
Here, we aim to prove a eigenvector delocalization for all times $t \geq 0$ with an a priori delocalization assumption for the initial data $X_{\ast}(0)$. First, we adopt the following notation. 
\begin{notation}
For the adjacency matrix $X= (H, H^\ast)$ of a bipartite graph, consider the covariance matrix $X_\ast = H^\ast H$. For any given eigenvalue $\lambda_\alpha$ of $X_\ast$, we will denote the corresponding eigenvector by $\mathbf{v}_\alpha$. We also let $\mathbf{v}_{t, \alpha}$ denote the eigenvector of $X_\ast(t)$ corresponding to the eigenvalue $\lambda_\alpha(t)$.
\end{notation}
The precise delocalization result is given as follows.
\begin{prop} \label{prop:eigenvectordelocalization}
Let $\mathbf{q} \in V$ be a vector such that for some fixed $B > 0$, we have the a priori estimate
\begin{align}
\max_\alpha \ |\mathbf{q} \cdot \mathbf{v}_{0,\alpha}| \ \leq \ B.
\end{align}
Then, for any $t \geq 0$, we have
\begin{align}
\max_{\alpha} \ \left| \mathbf{q} \cdot \mathbf{v}_{t, \alpha} \right| \ \prec \ B.
\end{align}
\end{prop}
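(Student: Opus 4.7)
The plan is to apply the \emph{eigenvector moment flow} method of Bourgade--Yau, adapted to the Laguerre / Wishart setting associated with the matrix-valued OU dynamics \eqref{eq:matrixOUblock} for $H(t)$. The key idea is that appropriately normalized polynomial observables in the overlaps $\mathbf{q} \cdot \mathbf{v}_{t,\alpha}$ evolve, conditionally on the eigenvalue trajectory, under a parabolic generator on configuration space that admits an $L^\infty$ maximum principle. Propagation of the initial delocalization is then automatic.

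First I would apply It\^o to \eqref{eq:matrixOUblock} to derive the coupled SDEs for the eigenvalues $\lambda_\alpha(t)$ and the eigenvectors $\mathbf{v}_{t,\alpha}$ of $X_\ast(t) = H(t)^\ast H(t)$. The eigenvalues solve a Laguerre Dyson Brownian motion, and the eigenvector equation takes the schematic form
\begin{align*}
\d \mathbf{v}_{t,\alpha} \ = \ \frac{1}{\sqrt N}\sum_{\beta \neq \alpha} \sigma_{\alpha\beta}(t)\,\d B_{\alpha\beta}(t)\,\mathbf{v}_{t,\beta} \ - \ \frac{1}{2N}\sum_{\beta \neq \alpha} \tau_{\alpha\beta}(t)\,\mathbf{v}_{t,\alpha}\,\d t,
\end{align*}
where $\sigma_{\alpha\beta}, \tau_{\alpha\beta}$ depend only on $(\lambda_\alpha,\lambda_\beta)$ and the $B_{\alpha\beta}$ are independent standard Brownian motions obtained by decomposing $\mathbf{B}(t)$ in the eigenbasis of $X_\ast(t)$.

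Next I would introduce the moment observables. Write $z_\alpha(t) := \sqrt N\,(\mathbf{q}\cdot \mathbf{v}_{t,\alpha})$, and for a configuration $\xi = (\xi_1,\ldots,\xi_N) \in \N^N$ of total mass $n = \sum_\alpha \xi_\alpha$ set
\begin{align*}
f_t(\xi) \ := \ \frac{1}{a(\xi)}\,\E\!\left[\prod_\alpha z_\alpha(t)^{2\xi_\alpha} \ \bigg|\ \{\lambda_\bullet(s)\}_{s \leq t}\right], \qquad a(\xi) \ := \ \prod_\alpha (2\xi_\alpha - 1)!!.
\end{align*}
A direct It\^o computation using the eigenvector SDE shows that, conditionally on the eigenvalues, $f_t$ satisfies
\begin{align*}
\partial_t f_t(\xi) \ = \ \sum_{\alpha \neq \beta} c_{\alpha\beta}(t)\,2\xi_\alpha(1+2\xi_\beta)\,\bigl(f_t(\xi^{\alpha\beta}) - f_t(\xi)\bigr),
\end{align*}
where $\xi^{\alpha\beta}$ denotes $\xi$ with one particle moved from $\alpha$ to $\beta$ and $c_{\alpha\beta}(t) \geq 0$ depends only on the eigenvalue gaps. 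This is the generator of a jump process on configurations that preserves the particle number $n$, so the maximum principle yields $\max_{|\xi|=n} f_t(\xi) \leq \max_{|\xi|=n} f_0(\xi)$ for each $n$.

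Finally, the hypothesis $|\mathbf{q}\cdot \mathbf{v}_{0,\alpha}| \leq B$ gives $z_\alpha(0)^2 \leq NB^2$, hence $f_0(\xi) \leq (NB^2)^n$ on every $n$-particle sector, and the same bound transfers to $f_t$. Specializing to $\xi = k\delta_\alpha$ and unconditioning on the eigenvalue trajectory gives
\begin{align*}
\E\!\left[(\mathbf{q}\cdot \mathbf{v}_{t,\alpha})^{2k}\right] \ \leq \ (2k-1)!!\,B^{2k}
\end{align*}
uniformly in $\alpha, t$. A Markov inequality with $k = k(\zeta)$ sufficiently large, together with a union bound over the $N$ indices $\alpha$, converts this into the desired stochastic domination $\max_\alpha |\mathbf{q}\cdot \mathbf{v}_{t,\alpha}| \prec B$. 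The main obstacle is the algebraic verification in the moment-flow step: one must confirm that the It\^o corrections coming from the Wishart eigenvector SDE really do reassemble into a nonnegative, particle-number-preserving jump kernel of the stated form, since the Laguerre rate functions differ from the Wigner case of Bourgade--Yau. Once this identity is in place the maximum principle and the conversion from moment bounds to $\prec$ are standard.
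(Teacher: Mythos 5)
Your proposal is correct and takes essentially the same route as the paper: the Bourgade--Yau eigenvector moment flow conditioned on the eigenvalue trajectory, the $\ell^\infty$ maximum principle (contraction) for the particle-jump generator with the Laguerre rates $d_{ij}(t) = (\lambda_i+\lambda_j)/(N(\lambda_i-\lambda_j)^2)$, specialization to a single-site configuration, and then Markov plus a union bound; the algebraic verification of the Wishart moment-flow kernel that you flag as the remaining obstacle is exactly what the paper imports from \cite{BY}. The only ingredient you omit is the reduction to the case where $X(0)$ has simple spectrum (the moment-flow representation is asserted only there), which the paper handles by a short perturbation/continuity argument.
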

Using the Chebyshev inequality, it suffices to bound the moments of the dot product $\mathbf{q} \cdot \mathbf{v}_{t, \alpha}$. To bound these moments, we define the following pseudo-moment generating function for this dot product for any vector $\eta = (\eta_i)_{i = 1}^N$ as follows:
\begin{align}
f_t(\eta; \mathbf{q}) \ = \ \E_{\mathbf{\lambda}_t} \left[ \prod_{i = 1}^N \ \frac{1}{(2 \eta_i - 1)!!} \left( \mathbf{q} \cdot \mathbf{v}_{t,i} \right)^{2 \eta_i} \right],
\end{align}
Here, the subscript $\lambda_t$ in the expectation denotes an expectation conditioning on the eigenvalue process $\{\lambda_{\alpha}(t)\}_{\alpha}$. We also use the notation $(n+1)!! = (n+1)(n-1)\ldots1$ for any odd integer $n \in \Z$, and use the convention $(-1)! = 1$.

Thus, our goal will be to estimate this pseudo-MGF. We study this pseudo-MGF $f_t(\eta; \mathbf{q})$ in context of a particle random walk on the lattice $[[1, N]]$ defined through a generator. The construction and study of this $p$-particle random walk in the context of eigenvector delocalization is given in more context and detail in the paper \cite{BY}. To describe this random walk, we first define its configuration space $\Omega_p$ for a fixed $p = O(1)$ independent of $N$:
\begin{align}
\Omega = \Omega_p \ := \ \left\{ \eta \in \Z_{\geq 0}^N: \ \sum_{i = 1}^N \ \eta_i = p \right\}.
\end{align}
We now define the random walk process; to do so, we first give the following definition. 
\begin{notation}
For any configuration $\eta \in \Omega_p$ and any two fixed sites $i \neq j \in [[1, N]]$, we define $\eta^{ij} \in \Omega_p$ as the configuration obtained by 
\begin{align}
\eta^{ij}_k \ = \ \begin{cases} 
      \eta_k & k \neq i,j \\
      \eta_i - 1 & k = i, \ \eta(i) \neq 0 \\
      \eta_i & k = i, \ \eta(i) = 0 \\
      \eta_j + 1 & k = j, \ \eta(i) \neq 0 \\
      \eta_j & k = j, \ \eta(i) = 0
   \end{cases}.
\end{align}
\end{notation}
In words, the configuration $\eta^{ij}$ is the configuration obtained from the configuration $\eta$ by moving one particle at site $i$ to site $j$; if, in the configuration, the site $i$ is void of any particles, then $\eta^{ij} = \eta$, i.e. the configuration $\eta$ is unchanged. This definition now allows us to define a continuous-time $p$-particle jump process on $[[1, N]]$ through the following generator:
\begin{align}
\mathscr{L}_t (f) \ = \ \sum_{i \neq j} \ d_{ij}(t) 2 \eta_i (1 + 2 \eta_j) \left[ f(\eta^{ij}) - f(\eta) \right], \label{eq:differentialoperatorEMF}
\end{align}
where the weight $d_{ij}(t) 2\eta_i(1 + 2 \eta_j)$ determines the particle jump rate. In our situation, we are interested in the weights
\begin{align}
d_{ij}(t) \ = \ \frac{\lambda_{i}(t) + \lambda_{j}(t)}{N(\lambda_{i}(t) - \lambda_{j}(t))^2}. \label{eq:spectralweightemf}
\end{align}
Here, the eigenvalues $\lambda_i(t), \lambda_j(t)$ are the eigenvalues of the time-evolved covariance matrix $X_\ast(t)$. With these weights, we now give the corresponding Kolmogorov backward equation:
\begin{align}
\partial_t f_t(\eta) \ = \ \mathscr{L}_t f_t(\eta). \label{eq:kfe}
\end{align}
This ODE is well-posed because $\Omega$ is finite. Moreover, its solution for some initial condition $f_0$ is given by the pseudo-MGF $f_t(\eta; \mathbf{q})$ so long as the spectrum of the initial data $X(0)$ is simple. A proof of this is given by a direct calculation using the following SDE for the eigenvector dynamics:
\begin{align}
\d\mathbf{v}_{t, \alpha} \ &= \ \frac{1}{\sqrt{N}} \sum_{\beta \neq \alpha} \ \frac{\sqrt{\lambda_\alpha + \lambda_\beta}}{\lambda_\alpha - \lambda_\beta} \mathbf{v}_{t, \beta} \d B_{\alpha \beta}^{(s)} \ - \ \frac{1}{2N} \sum_{\beta \neq \alpha} \ \frac{\lambda_\alpha + \lambda_\beta}{(\lambda_\alpha - \lambda_\beta)^2} \mathbf{v}_{t, \alpha} \d t. \label{eq:DBMeigenvector}
\end{align}
For details on this SDE, see \cite{BY}.

We now exploit study the generator $\mathscr{L}_t$ of this $p$-particle random walk by exploiting its contraction property on any $\ell^{r}(\Omega)$ space. This follows from an application of the Duhamel formula and semigroup theory. 
\begin{proof}
(of Proposition \ref{prop:eigenvectordelocalization}).

Suppose first that the spectrum of $X(0)$ is simple, in which case the pseudo-MGF $f_t(\eta; \mathbf{q})$ solves the backward equation \eqref{eq:kfe}. Because $\mathscr{L}_t$ is a contraction on $\ell^{\infty}$, we see
\begin{align}
\| f_t \|_{\ell^{\infty}(\Omega_p)} \ \leq \ \| f_0 \|_{\ell^{\infty}(\Omega_p)} \ \leq \ B^{2p},
\end{align}
where for the second inequality, we used delocalization for the initial data and the assumption $\eta \in \Omega_p$ to obtain $\| \eta \|_{\ell^1(\Z)} = p$. For any eigenvector index $i \in [[1, N]]$, we pick the configuration $\eta(i)$ whose components are given by $\eta_j = p \delta_{ij}$ which gives
\begin{align}
f_t(\eta(i)) \ = \ \frac{1}{(2p - 1)!!} \E \left( \mathbf{q} \cdot \mathbf{v}_{i,t} \right) \ \leq \ B^{2p}.
\end{align}
Using the Chebyshev inequality, we deduce Proposition \ref{prop:eigenvectordelocalization} in the case that the spectrum of $X(0)$ is simple. In the case that the spectrum of $X(0)$ is not simple, we note that the eigenvectors $\mathbf{v}_{i,t}$ are uniformly continuous in the eigenvalues $\lambda_i(t)$, and thus a perturbation in the spectrum of $X(0)$ reduces the problem to the calculation for the case of $X(0)$ retaining a simple spectrum. This completes the proof of delocalization.

\end{proof}

\subsection{The Free Convolution}
We now introduce the free convolution measure, which interpolates between the spectral statistics of the graph $X(0)$ and the Gaussian perturbation $W$. To define the measure, we first define a Stieltjes transform through the following functional equation:
\begin{align}
m_{\on{lin},t}(z) \ = \ \frac{1}{2N} \sum_{i = 1}^N \ \frac{1}{V_i - z - T m_{\on{lin},t}(z)} + \frac{1}{- V_i - z - Tm_{\on{lin},t}(z)}.
\end{align}
Here, the terms $\pm V_i$ correspond to the nonzero eigenvalues of the linearization $X(t)$, i.e. those eigenvalues $\lambda$ such that $\lambda^2$ is an eigenvalue of the covariance matrix $X_\ast(t)$. It is known that the above fixed-point equation admits a unique solution $m_{\on{lin},t}(z)$ that maps the upper-half plane to itself; we refer to \cite{Bi} for details. 

We now define the Stieltjes transform of the free convolution measure, which we denote by $m_{\on{fc},t}$, by the following equation:
\begin{align}
m_{\on{fc},t}(z) \ = \ \frac{1}{\sqrt{z}} m_{\on{lin}, t}(\sqrt{z}),
\end{align}
where the square root is chosen with the principal branch of the logarithm on the upper-half plane.

The Radon-Nikodym density $\varrho_{\on{fc}}$ of the free convolution measure with respect to Lebesgue measure is defined by taking the Stieltjes inversion of $m_{\on{fc},t}(z)$. We now record a central result in \cite{Y3} concerning a strong local law for the free convolution. First, we will establish the following notation for the partial Stieltjes transform of the linearization $X(t)$:
\begin{align}
s_{\on{lin},t}(z) \ := \ \frac{1}{2N} \sum_{i = M+1}^{M+N} \ \left( X(t) - z \right)^{-1}_{ii}.
\end{align}
%
\begin{theorem}
Fix any $\zeta > 0$. Then for any time $t \in [0, N^{-\zeta}]$ we have with high probability, uniformly over the domain $U_{\e, \delta}$ defined shortly, 
\begin{align}
\left| s_{\on{lin},t}(z) - m_{\on{lin},t}(z) \right| \ \prec \ \frac{1}{\sqrt{N \eta}}. \label{eq:shorttimesSTfreeconvolutionlinear}
\end{align}
\end{theorem}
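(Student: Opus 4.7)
The plan is to propagate the local law from time $0$, established in \cite{Y1}, to time $t \in [0, N^{-\zeta}]$ by exploiting the stochastic representation $X(t) \overset{d}{=} e^{-t/2} X(0) + (1 - e^{-t})^{1/2} W$ derived from the OU SDE \eqref{eq:matrixOUprocess}. Here $W$ is a Gaussian matrix on $\mathscr{M}_{\mathbf{e}}$ independent of $X(0)$; the smoothness of the Gaussian component provides concentration estimates that are essential to closing an approximate self-consistent equation for $s_{\on{lin}, t}(z)$.

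First I would apply the Schur complement identity to each diagonal entry $G_{ii}(z) := ((X(t) - z)^{-1})_{ii}$ with $i \in [[M+1, M+N]]$. This expresses $G_{ii}^{-1}$ as $-z$ minus a quadratic form in the $i$-th row of $X(t)$ contracted against the resolvent of the corresponding minor $G^{(i)}$. Decomposing each such row as $e^{-t/2}$ times a row of $X(0)$ plus $(1-e^{-t})^{1/2}$ times a row of $W$, and averaging over $i$, one obtains a relation matching in form the defining equation for $m_{\on{lin},t}(z)$ in terms of the nonzero eigenvalues $\pm V_i$ of the initial linearization, up to a fluctuation term $\mathscr{E}(z)$.

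Next I would bound $\mathscr{E}(z)$. Conditionally on $X(0)$, the Gaussian contribution from $W$ yields quadratic forms of the type $\langle W_i, G^{(i)} W_i \rangle - N^{-1}\on{Tr}(G^{(i)})$ that concentrate at scale $1/\sqrt{N\eta}$ by Hanson-Wright, using the Ward identity $\sum_j |G_{ij}|^2 = \im(G_{ii})/\eta$. The cross terms and the terms purely from $X(0)$ are controlled using the sparse local law of \cite{Y1} together with the eigenvector delocalization provided by Proposition \ref{prop:eigenvectordelocalization}, which bounds individual resolvent entries through the spectral decomposition. The sparsity parameter $D$ governs the non-Gaussian error, and the constraint $t \leq N^{-\zeta}$ ensures the deterministic decay of $X(0)$ has not yet swamped its spectral information.

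Finally, I would invert the stability of the defining equation for $m_{\on{lin},t}$ on $U_{\e, \delta}$. Since the relevant spectral interval $\mathscr{I}_{\on{linear},\e}$ stays a bounded distance from the edge, the derivative of the fixed-point map is non-degenerate there and converts an error of size $|\mathscr{E}(z)|$ into the target bound \eqref{eq:shorttimesSTfreeconvolutionlinear}. The main obstacle is running a bootstrap in $\eta$: at large $\eta$ the trivial estimate $|G_{ii}| \leq 1/\eta$ gives a weak version of \eqref{eq:shorttimesSTfreeconvolutionlinear} for free, but pushing down to the microscopic scale requires iteratively refining the control via the stability relation, each refinement sharpened by the concentration estimates above. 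Uniformity over $U_{\e,\delta}$ is then obtained by a standard lattice net combined with the Lipschitz continuity of the Stieltjes transforms in $z$.
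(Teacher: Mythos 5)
First, a structural point: the paper does not prove this statement at all. It is recorded verbatim as ``a central result in \cite{Y3}'' and used as an input to derive the weak law \eqref{eq:shorttimesstieltjesfreeconvolutionMP} and Proposition \ref{prop:locallawOUlimit}; there is no internal argument to compare yours against. So the relevant question is whether your sketch would actually deliver the theorem, and as written it has two genuine gaps. (i) Your Schur-complement derivation is carried out in the entry basis, decomposing each row of $X(t)$ into an $X(0)$-row plus a Gaussian row. Averaging those Schur complements over $i$ can only produce a self-consistent equation for $s_{\on{lin},t}$ in terms of traces of minors, i.e.\ an equation of Marchenko--Pastur type; it does not produce the defining fixed-point equation for $m_{\on{lin},t}$, which involves the individual eigenvalues $\pm V_i$ of the initial linearization. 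To see the $\pm V_i$ one must first use the orthogonal (block-respecting) invariance of the Gaussian measure on $\mathscr{M}_{\mathbf{e}}$ to pass to the singular-vector basis of $H(0)$, so that the matrix becomes (diagonal data) $+\sqrt{1-e^{-t}}\,W'$ with $W'$ again Gaussian, and only then run a deformed-law/self-consistent analysis; this step is absent from your outline. (ii) Your treatment of the non-Gaussian contributions is not viable in the entry basis: the rows of a biregular adjacency matrix are neither independent of the corresponding minor nor have independent entries (the row sums are fixed), so the quadratic forms $\langle a_i, G^{(i)} a_i\rangle$ with $G^{(i)}$ the resolvent of the \emph{time-evolved} minor cannot be controlled by ``the sparse local law of \cite{Y1} together with Proposition \ref{prop:eigenvectordelocalization}''; the local law of \cite{Y1} controls averaged resolvent quantities of $X(0)$, not these mixed quadratic forms, and delocalization of $X_\ast(t)$-eigenvectors does not substitute for the missing independence. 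This dependence problem is exactly why \cite{Y1} works with switchings rather than row removal.

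Two smaller points. The role you assign to the Gaussian part is overstated: with $t\leq N^{-\zeta}$ the Gaussian variance is $1-e^{-t}\asymp t$, so Hanson--Wright gives fluctuations of size roughly $t\sqrt{\im m/(N\eta)}$, which is far below the target $1/\sqrt{N\eta}$; the dominant error and the stability analysis are governed by the initial data, i.e.\ by the regularity of the empirical measure of the $\pm V_i$ on $U_{\e,\delta}$, which must come from the \cite{Y1} local law and needs to be made quantitative in the stability step. Finally, the bootstrap in $\eta$ and the lattice-plus-Lipschitz argument for uniformity are standard and fine, but uniformity in $t\in[0,N^{-\zeta}]$ also has to be addressed (in the paper's framework this is done by a stochastic continuity argument deferred to \cite{Y3}). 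In short, the skeleton (representation from \eqref{eq:matrixOUprocess}, self-consistent equation, concentration, stability, bootstrap) is the right family of ideas, but the basis change exploiting Gaussian invariance and a correct mechanism for the correlated sparse part are essential and missing.
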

We now use this result to establish a similar weak local law for the Stieltjes transform $s_t(z)$ of the covariance matrix $X_\ast(t)$. We first note the following relation:
\begin{align}
s_t(z) \ = \ \frac{1}{\sqrt{z}} s_{\on{lin}, t}(\sqrt{z}),
\end{align}
where we again take the principal branch of the logarithm in defining the square root on the upper-half plane. Because $z \in U_{\e, \delta}$ implies $z^2 \in U_{\e', \delta'}$ for some $\e', \delta' > 0$, and because $|z|^{-1} = O(1)$ for all $z \in U_{\e, \delta}$, we deduce the following weak local law for the Stieltjes transform of the covariance matrix for any time $t \in [0, N^{-\zeta}]$:
\begin{align}
\sup_{z \in U_{\e, \delta}} \ \left| s_t(z) - m_{\on{fc}, t}(z) \right| \ \prec \ \frac{1}{\sqrt{N \eta}}. \label{eq:shorttimesstieltjesfreeconvolutionMP}
\end{align}
We now use this estimate and Theorem 2.5 in \cite{Y1} to deduce a local law for all short times. First, we define the subdomain of $U_{\e, \delta}$:
\begin{align}
U_{\e, \delta}^1 \ = \ \left\{ z = E + i \eta \in U_{\e, \delta}: \ |\eta| \leq 1 \right\}.
\end{align}
%
\begin{prop} \label{prop:locallawOUlimit}
For some $B \leq N^{-\zeta}$ for any fixed sufficiently small $\zeta > 0$, we have the following local law uniformly over such $z$ and uniformly over all times $t \in [0, B]$:
\begin{align}
\left| s_t(z) - m(z) \right| \ \prec \ B + \frac{1}{(N \eta)^{1/4}}. \label{eq:STOUINFTY}
\end{align}
\end{prop}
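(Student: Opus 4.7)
The plan is to derive \eqref{eq:STOUINFTY} by the triangle inequality, combining the weak free-convolution local law \eqref{eq:shorttimesstieltjesfreeconvolutionMP} with a deterministic comparison between the free convolution Stieltjes transform $m_{\on{fc},t}$ and the limiting Marchenko-Pastur Stieltjes transform $m$. Concretely, I would write
\begin{align}
|s_t(z) - m(z)| \ \leq \ |s_t(z) - m_{\on{fc},t}(z)| \ + \ |m_{\on{fc},t}(z) - m_{\on{fc},0}(z)| \ + \ |m_{\on{fc},0}(z) - m(z)|
\end{align}
and control each summand separately. The first term is dispatched directly by \eqref{eq:shorttimesstieltjesfreeconvolutionMP}, yielding $O_{\prec}(1/\sqrt{N\eta})$; on $U_{\e,\delta}^1$ one has $N\eta \geq 1$, so $1/\sqrt{N\eta} \leq 1/(N\eta)^{1/4}$ and this is absorbed into the target right-hand side.

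For the third term, setting $t = 0$ in the fixed-point equation defining $m_{\on{lin},t}$ produces exactly the partial empirical Stieltjes transform $s_{\on{lin},0}$ of $X(0)$, so that $m_{\on{fc},0}(z) = z^{-1/2} m_{\on{lin},0}(\sqrt{z}) = s_0(z)$. Thus $|m_{\on{fc},0}(z) - m(z)| = |s_0(z) - m(z)|$, and this is controlled by Theorem 2.5 of \cite{Y1}, which supplies the initial-time local law for the covariance matrix in the form $O_{\prec}(1/(N\eta)^{1/4})$ throughout the bulk, giving the second summand on the right-hand side of \eqref{eq:STOUINFTY}.

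For the middle term, the goal is a Lipschitz-in-time estimate $|m_{\on{fc},t}(z) - m_{\on{fc},0}(z)| = O(t) \leq O(B)$, which will provide the $B$-contribution. To obtain it, differentiate the defining fixed-point equation for $m_{\on{lin},t}$ in $t$ and solve the resulting linear equation for $\partial_t m_{\on{lin},t}(z)$; the linearization of the self-consistent equation is invertible with $O(1)$-bounded inverse uniformly over $z \in U_{\e,\delta}^1$, since $\Im m_{\on{lin},t}(z)$ stays bounded below in the bulk. Integrating in $t$ from $0$ to $t \leq B$ and converting back to $m_{\on{fc},t}$ via $m_{\on{fc},t}(z) = z^{-1/2} m_{\on{lin},t}(\sqrt{z})$ (using $|z|^{-1/2} = O(1)$ on $U_{\e,\delta}^1$) delivers the $O(B)$ bound.

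The main obstacle is this middle step: establishing the uniform-in-$z$ stability of the self-consistent equation so that the Lipschitz-in-time estimate survives on all of $U_{\e,\delta}^1$ and not merely away from the spectral edges. This reduces to quantitative control on the invertibility of the linearized self-consistent equation, which is standard in the free-probability literature once one has the bulk lower bound on $\Im m_{\on{lin},t}(z)$ built into the choice of domain and the restriction $|\eta| \leq 1$. Assembling the three bounds yields \eqref{eq:STOUINFTY}.
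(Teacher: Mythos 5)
Your reduction via the triangle inequality, the use of \eqref{eq:shorttimesstieltjesfreeconvolutionMP} for $|s_t - m_{\on{fc},t}|$, and the identification $m_{\on{fc},0} = s_0$ together with the local law of \cite{Y1} are all in the spirit of the paper's argument. The gap is your middle term. The Lipschitz-in-time bound $|m_{\on{fc},t}(z) - m_{\on{fc},0}(z)| = O(t)$ uniformly over $U_{\e,\delta}^1$ does not follow from differentiating the fixed-point equation: schematically, writing $m_t(z) = \frac{1}{N}\sum_i (V_i - z - t m_t(z))^{-1}$ and differentiating in $t$ gives $\partial_t m_t = m_t S_2 / (1 - t S_2)$ with $S_2 = \frac{1}{N}\sum_i (V_i - z - t m_t)^{-2}$, and $|S_2| \leq \Im m_t / (\eta + t \,\Im m_t)$, which for small $t$ is of order $\eta^{-1}$ in the bulk, not $O(1)$. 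So the obstruction is not the invertibility of the linearization (the factor $(1 - tS_2)^{-1}$ you focus on) but the size of the source term $m_t S_2$ itself; integrating only yields $O(\log(1 + t/\eta))$. In fact the claim is false as stated in the regime $N^{-1+\delta} \lesssim \eta \ll t \leq B$: there $m_{\on{fc},t} - m_{\on{fc},0}$ contains the fluctuation of the empirical transform $s_0$ at scale $\eta$ relative to its smoothing at scale $t$, which is controlled only by the local-law error (up to $(N\eta)^{-1/4}$ here), not by $O(t)$. Since you have already spent the $(N\eta)^{-1/4}$ budget on $|m_{\on{fc},0} - m|$, the middle term cannot be closed as you describe.

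The paper avoids comparing $m_{\on{fc},t}$ with $m_{\on{fc},0}$ pointwise. It instead writes $m_{\on{fc},t}(z) = \left(1 + t m_{\on{fc},t}(z)\right) s_0\!\left( z (1 + t m_{\on{fc},t}(z))^2 \right)$, inserts the approximate self-consistent equation satisfied by $s_0$ (Proposition 5.3 of \cite{Y1}), and uses the a priori bound $m_{\on{fc},t} = O(\log N)$ (Lemmas 7.1--7.2 of \cite{LY}) together with $t \leq B \leq N^{-\zeta}$ to absorb all $t$-dependent terms ($\mathscr{E}_1, \mathscr{E}_2$ in the paper) into a small error; this shows $m_{\on{fc},t}$ approximately satisfies the Marchenko--Pastur quadratic equation $\gamma z m^2 + (\gamma + z - 1)m - 1 = \text{small}$, and the stability result Proposition 5.4 of \cite{Y1} then gives $|m_{\on{fc},t} - m| \prec B + (N\eta)^{-1/4}$ directly. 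Uniformity over $t \in [0,B]$ is then obtained by a stochastic continuity argument from \cite{Y3}, a point your proposal also leaves unaddressed. If you wish to keep your decomposition, you must replace the Lipschitz step by a bound on $|m_{\on{fc},t} - m|$ obtained through such a stability argument for the self-consistent equation.
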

\begin{proof}
Using \eqref{eq:shorttimesstieltjesfreeconvolutionMP} and Theorem 2.5 in \cite{Y1} (the local law), we note it suffices to prove, instead, the estimate
\begin{align}
\left| m(z) - m_{\on{fc},t}(z) \right| \ \prec \ B + \frac{1}{(N \eta)^{1/4}}. \label{eq:STFCINFTY}
\end{align}
Moreover, by the same self-improving estimate in Lemma 4.10, we may restrict ourselves to $U_{\e, \delta}^1$. We now unfold the definition of the Stieltjes transform $m_{\on{fc},t}$ to obtain the following fixed-point equation for the Stieltjes transform:
\begin{align}
m_{\on{fc},t}(z) \ = \ \frac{1}{N} \sum_{i = 1}^N \ \frac{1 +  tm_{\on{fc},t}(z)}{V_i^2 - z(1 + tm_{\on{fc},t}(z))^2}
\end{align}
This allows us to rewrite the free convolution $m_{\on{fc},t}$ in terms of the Stieltjes transform $s_0(z)$ for the bipartite graphs as follows:
\begin{align}
m_{\on{fc},t}(z) \ = \ \left( 1 + tm_{\on{fc},t}(z) \right) s_0 ( z(1 + tm_{\on{fc},t}(z))^2 ). \label{eq:initialdatarepresentationSTfreeconvolution}
\end{align}
By Lemma 7.1 and Lemma 7.2 in \cite{LY}, we first note
\begin{align}
m_{\on{fc},t}(z) \ = \ O \left( \log N \right). \label{eq:STlogbound}
\end{align}
Using the initial data representation \eqref{eq:initialdatarepresentationSTfreeconvolution} and the self-consistent equation for the bipartite graph data $s_0$ given in Proposition 5.3 in \cite{Y1}, we have the following self-consistent equation for the free convolution data $m_{\on{fc},t}$:
\begin{align}
\gamma z \left(1 + t m_{\on{fc},t}(z) \right)^2 \frac{m_{\on{fc},t}(z)^2}{(1 + t m_{\on{fc}, t}(z))^2} + \left(\gamma + z(1 + t m_{\on{fc},t}(z))^2 - 1 \right) \frac{m_{\on{fc},t}(z)}{1 + t m_{\on{fc},t}(z))} \ - \ 1 \ = \ (1 + |z|) o_{N \to \infty}(1)
\end{align}
We rewrite the first term as $\gamma z m_{\on{fc},t}(z)^2$. We now focus on the second term. Expanding it, we have the second term is given by the following expression:
\begin{align}
m_{\on{fc},t}(z) \left[ (\gamma -1) \frac{1}{1 + t m_{\on{fc},t}(z)} \ + \ z(1 + tm_{\on{fc},t}(z)) \right] \ &= \ m_{\on{fc},t}(z) \left[ (\gamma + z - 1) \ + \ \mathscr{E}_1 \ + \ \mathscr{E}_2 \right],
\end{align}
where the error terms are given by
\begin{align}
\mathscr{E}_1 \ &= \ (\gamma - 1) \frac{t m_{\on{fc},t}(z)}{1 + t m_{\on{fc},t}(z)} \\
\mathscr{E}_2 \ &= \ t \left( z  m_{\on{fc},t}(z) \right).
\end{align}
Here, we use $\log(N)$-bound in \eqref{eq:STlogbound} to deduce the following deterministic bound:
\begin{align}
\sup_{z \in U_{\e, \delta}} \ \left| tm_{\on{fc},t} \right| = o(1).
\end{align}
This allows us to Taylor expand the term $(1 + tm_{\on{fc},t}(z))^{-1}$ via a geometric series. Ultimately, this expansion of the second term gives us the following self-consistent equation
\begin{align}
\gamma z m_{\on{fc},t}^2 \ + \ (\gamma + z - 1) m_{\on{fc},t} - 1 \ = \ (1 + |z|) o_{N \to \infty}(1) \ - \ m_{\on{fc},t} (\mathscr{E}_1 + \mathscr{E}_2).
\end{align}
Because $t \ll N^{-\zeta}$ for sufficiently small $\zeta > 0$, again by \eqref{eq:STlogbound} we deduce the following bound:
\begin{align}
\sup_{z \in U_{\e, \delta}} \ \left| m_{\on{fc},t}(z)(\mathscr{E}_1 + \mathscr{E}_2) \right| \ = \ (1 + |z|) o_{N \to \infty}(1).
\end{align}
We now obtain the estimate \eqref{eq:STFCINFTY} by appealing to Proposition 5.4 in \cite{Y1}. This gives the desired estimate for a fixed time $t \in [0, B]$. To extend to all times $t \in [0, B]$, we appeal to a stochastic continuity argument for the Stieltjes transform given \cite{Y3}. This completes the proof of Proposition \ref{prop:locallawOUlimit}.
\end{proof}
We now briefly discuss the consequences of Proposition \ref{prop:locallawOUlimit}. We first recall the following notion: for any index $i \in [[1, N]]$, we define the $i$-th classical location $\gamma_i$ of the Marchenko-Pastur law as the solution to the following quantile formula:
\begin{align}
\frac{i}{N} \ = \ \int_{-\infty}^{\gamma_i} \ \varrho_{\on{MP}}(E) \ \d E,
\end{align}
where $\varrho_{\on{MP}}$ denotes the density function of the Marchenko-Pastur law. We will also introduce the following maximal functions for the Green's function borrowed from \cite{Y1}.
\begin{notation}
Let $G_t(z)$ denote the Green's function of $X_\ast(t)$, i.e.
\begin{align}
G_t(z) \ = \ \left( X_\ast(t) - z \right)^{-1}
\end{align}
for $z \in \C_+$. We will define the following control parameter for the Green's function:
\begin{align}
\Gamma_t(z) \ = \ \max_{i, j \in [[1, N]]} \ \left( \left| G_{ij}(z) \right| \vee 1 \right).
\end{align}
\end{notation}
We now record the following estimates for eigenvalue rigidity and the maximal function $\Gamma_t$, which are consequence of Proposition \ref{prop:locallawOUlimit}.
\begin{prop} \label{prop:consequenceslocallawOUlimit}
For a fixed $\kappa > 0$ independent of $N$, fix an index $i \in [[\kappa, (1-\kappa)N]]$. Then 
\begin{align}
\sup_{t \in [0, D^{-1/4}]} \ \left| \lambda_i(t) - \gamma(i) \right| \ \prec \ D^{-1/4} \label{eq:weakrigidityuniformtime}
\end{align}
where $\lambda_i(t)$ denotes the $i$-th eigenvalue of $X_\ast(t)$ and $\gamma(i)$ denotes the $i$-th classical location of $\varrho_{\on{MP}}$. Moreover, suppose we have an interval $\mathscr{I} \subseteq [\kappa, (1 - \kappa)(1 + \sqrt{\gamma})^2]$ such that $|\mathscr{I}| \asymp N^{-1 + \zeta}$ for some fixed small $\zeta > 0$. Then for any time $t \in [0, D^{-1/4}]$,
\begin{align}
\int_{\mathscr{I}} \ \sum_{i = 1}^N \ \delta_{\lambda_i(t)} \ = \ | \left\{ i: \ \lambda_i(t) \in \mathscr{I} \right\} | \ = \ O(N^{\zeta}) \ = \ O_{\prec} (1). \label{eq:accumulationeigenvalues}
\end{align}
Lastly, for any $z \in \C_+$ we have
\begin{align}
\sup_{t \in [0, D^{-1/4}]} \ \Gamma_t(z) \ \prec \ 1 + \frac{1}{N \eta}. \label{eq:GFsupnorm}
\end{align}
\end{prop}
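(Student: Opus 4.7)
All three conclusions are standard corollaries of the local law \eqref{eq:STOUINFTY}, using the now-familiar three templates: Helffer--Sj\"ostrand for rigidity, imaginary-part control for counting, and spectral decomposition combined with the delocalization of Proposition \ref{prop:eigenvectordelocalization} for the Green's function. The plan is to run each template with the choice $B = D^{-1/4}$ in Proposition \ref{prop:locallawOUlimit}. This is admissible: $D \in [N^{\gamma}, N]$ forces $N^{-1/4} \leq D^{-1/4} \leq N^{-\gamma/4}$, so $B \leq N^{-\zeta}$ with $\zeta = \gamma/4$.

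For the rigidity \eqref{eq:weakrigidityuniformtime}, I would apply the Helffer--Sj\"ostrand formula to a smoothed cutoff of $\mathbf{1}_{(-\infty, E]}$ and pair against $s_t(z) - m(z)$. Standard dyadic integration down to scales $\eta \gtrsim N^{-1}$ converts the pointwise bound $B + (N\eta)^{-1/4}$ from \eqref{eq:STOUINFTY} into the counting-function estimate
\[ \left| \#\{i : \lambda_i(t) \leq E\} - N \int_{-\infty}^{E} \varrho_{\on{MP}}(x)\, \d x \right| \ \prec \ N \bigl( B + N^{-1/4} \bigr). \]
Since $E \in \mathscr{I}_{\on{MP}, \kappa}$ lies in the bulk where $\varrho_{\on{MP}} \asymp 1$, inverting at the quantile level yields $|\lambda_i(t) - \gamma(i)| \prec B + N^{-1/4}$, which is $O_{\prec}(D^{-1/4})$ because $D \leq N$.

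For \eqref{eq:accumulationeigenvalues}, use the elementary bound $\#\{i : \lambda_i(t) \in [E-\eta, E+\eta]\} \leq C N \eta\, \on{Im}\, s_t(E + i\eta)$ at $\eta \asymp |\mathscr{I}| \asymp N^{-1+\zeta}$. The local law \eqref{eq:STOUINFTY} plus $\on{Im}\, m_{\on{fc}, t}(E + i\eta) = O(1)$ in the bulk give $\on{Im}\, s_t(E + i\eta) \prec 1$, so the count is $\prec N \eta = N^{\zeta}$, which is $O_{\prec}(1)$ by definition of stochastic domination. For the Green's function bound \eqref{eq:GFsupnorm}, use the spectral representation $G_{ij}(z) = \sum_{\alpha} \mathbf{v}_{t, \alpha}(i)\, \mathbf{v}_{t, \alpha}(j)\, (\lambda_{\alpha}(t) - z)^{-1}$; applying Proposition \ref{prop:eigenvectordelocalization} to the standard basis vectors gives $|\mathbf{v}_{t, \alpha}(i)| \prec N^{-1/2}$, whence $|G_{ij}(z)| \prec N^{-1} \sum_{\alpha} |\lambda_{\alpha}(t) - z|^{-1}$. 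Split this sum according to whether $|\lambda_{\alpha}(t) - E| \leq \eta$: by \eqref{eq:accumulationeigenvalues} the near part has $O_{\prec}(1)$ terms each bounded by $\eta^{-1}$, producing $O_{\prec}(1/(N\eta))$, while the far part is a Riemann sum against $\varrho_{\on{MP}}$ of size $O(1)$ modulo logarithmic factors absorbed into $\prec$.

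The main obstacle is propagating these fixed-time estimates to bounds uniform over $t \in [0, D^{-1/4}]$. I would handle this exactly as at the end of the proof of Proposition \ref{prop:locallawOUlimit}: establish deterministic Lipschitz-in-$t$ control for $s_t(z)$, $\lambda_i(t)$, and the individual Green's function entries along the flow \eqref{eq:matrixOUprocess}, apply the fixed-time estimate on a grid of spacing $N^{-C}$ for large $C$, and take a union bound. Everything else reduces to standard manipulations already present in \cite{BHKY}, \cite{LY}, and the earlier sections of this paper.
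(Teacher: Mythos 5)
Your proposal is correct and, for the rigidity estimate \eqref{eq:weakrigidityuniformtime} and the Green's function bound \eqref{eq:GFsupnorm}, follows essentially the paper's route: Helffer--Sj\"ostrand applied to the time-uniform local law of Proposition \ref{prop:locallawOUlimit} with the choice $B = D^{-1/4}$ (admissible since $D \geq N^{\gamma}$, and sufficient since $N^{-1/4} \leq D^{-1/4}$ because $D \leq N$), and then the spectral representation of $G_t$ combined with Proposition \ref{prop:eigenvectordelocalization} and a near/far splitting of the spectrum --- the paper implements the far part by differentiating in $\eta$ and summing over the dyadic shells $\mathscr{I}_{2,\ell}$, which is the same computation as your Riemann-sum bound. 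The one genuine divergence is \eqref{eq:accumulationeigenvalues}: the paper asserts it follows from \eqref{eq:weakrigidityuniformtime} together with bulk regularity of $\varrho_{\on{MP}}$, whereas you bound the count by $CN\eta \, \on{Im} \, s_t(E + i\eta)$ at $\eta \asymp |\mathscr{I}| \asymp N^{-1+\zeta}$ and invoke the local law at that scale. Your route is the standard one and is arguably more robust here, since rigidity at scale $D^{-1/4} \gg N^{-1+\zeta}$ by itself only controls the number of eigenvalues in $\mathscr{I}$ by $O(ND^{-1/4})$; the imaginary-part bound is what actually produces $O(N^{\zeta})$. Two small points to tighten: your claim that the near part of the spectral sum has $O_{\prec}(1)$ terms is literally true only when $\eta \lesssim N^{-1+\zeta}$ --- for larger $\eta$ you should cover $[E - \eta, E + \eta]$ by $O(1 + N^{1-\zeta}\eta)$ windows of length $N^{-1+\zeta}$, which still yields a near contribution of $O_{\prec}\left(1 + (N\eta)^{-1}\right)$ after multiplying by $N^{-1}\eta^{-1}$; and the uniformity in $t \in [0, D^{-1/4}]$ is already packaged into the statement of Proposition \ref{prop:locallawOUlimit} for the Stieltjes transform, so your grid-plus-Lipschitz continuity argument is only needed to transfer that uniformity to the eigenvalue and Green's-function-entry quantities, exactly as you indicate.
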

\begin{proof}
The proof of the weak rigidity estimate \eqref{eq:weakrigidityuniformtime} follows from the Helffer-Sjostrand functional calculus. The statement of accumulating eigenvalues \eqref{eq:accumulationeigenvalues} follows from \eqref{eq:weakrigidityuniformtime} and regularity of the Marchenko-Pastur density inside its bulk.

It now remains to prove the second estimate \eqref{eq:GFsupnorm} which follows from a dyadic decomposition of the scale $\eta$ is given in Proposition 5.1 in \cite{BHKY} and an eigenvector delocalization result proven in the next section. We briefly and loosely discuss a variant of this argument using the estimate \eqref{eq:accumulationeigenvalues}. We first appeal to the spectral representation of the Green's function $G_t(z)$ as follows: for any fixed indices $i,j$, we have
\begin{align}
\left| [G_t(z)]_{ij} \right| \ = \ \left| \sum_{k = 1}^N \ \frac{\mathbf{v}_k(i) \mathbf{v}_k(j)}{\lambda_k(t) - z} \right| \ &\leq \ \left( \sup_{k, i, j \in [[1, N]]} \ |\mathbf{v}_k(i) \mathbf{v}_k(j)| \right) \sum_{k = 1}^N \ \frac{1}{|\lambda_k(t) - z|} \\
&\prec \ \frac{1}{N} \sum_{k = 1}^N \ \frac{1}{|\lambda_k(t) - z|}.\label{eq:termtoestimateGFsup}
\end{align}
where to obtain the last inequality we appeal to eigenvector delocalization in Proposition \ref{prop:eigenvectordelocalization}. Instead of the dyadic decomposition used in \cite{BHKY}, we now differentiate in $\eta$:
\begin{align}
\partial_{\eta} \ \frac{1}{\sqrt{(\lambda_k(t) - E)^2 + \eta^2}} \ = \ - \frac{\eta}{\left[ (\lambda_{\alpha} - E)^2 + \eta^2 \right]^{3/2}}.
\end{align}
Thus by the time-evolving local law in Proposition \ref{prop:locallawOUlimit} and the bound \eqref{eq:termtoestimateGFsup} we see, writing $z = E + i \eta_0$, for $\e > 0$ sufficiently small
\begin{align}
\left| [G_t(z)]_{ij} \right| \ \prec \ 1 + \frac{1}{N} \sum_{k = 1}^N \ \int_{\eta_0}^{N^{-1 + \e}} \ \frac{\eta}{\left[ (\lambda_{\alpha} - E)^2 + \eta^2 \right]^{3/2}} \ \d \eta. \label{eq:integralestimateGFsup}
\end{align}
It is now our goal to bound the integral on the RHS of \eqref{eq:integralestimateGFsup}. To do so, we first define the following sets of eigenvalues $\lambda_k(t)$ of $X_\ast(t)$: for a fixed, small $\zeta > 0$,
\begin{align}
\mathscr{I}_1 \ &= \ \left\{ k: \ |\lambda_k - E| \leq N^{-1 + \zeta} \right\}, \quad \mathscr{I}_{2,\ell} \ = \ \left\{ k: \ 2^{\ell} N^{-1 + \zeta} \leq |\lambda_k - E| < 2^{\ell + 1} N^{-1 + \zeta} \right\}
\end{align}
with $\ell = 0, \ldots, O(\log N)$. We first compute an upper bound on the integral on the RHS of \eqref{eq:integralestimateGFsup} by restricting to those eigenvalues $\lambda_k$ with $k \in \mathscr{I}_{2,\ell}$ for any such $\ell$. In this case, we have the following estimates:
\begin{align}
\int_{\eta_0}^{N^{-1 + \e}} \ \frac{\eta}{\left[ (\lambda_{\alpha} - E)^2 + \eta^2 \right]^{3/2}} \d\eta \ &\leq \ \int_{\eta_0}^{N^{-1 + \e}} \ \frac{\eta}{2^{3 \ell} N^{-3 + 3 \zeta} \left[ 1 + (2^{-\ell} N^{1 - \zeta} \eta)^2 \right]^{3/2}} \d\eta \nonumber \\
&= \ 2^{-\ell} N^{1 - \zeta} \int_{\eta_0}^{N^{-1 + \e}} \ \frac{2^{-\ell} N^{1 - \zeta}\eta}{\left[1 + (2^{-\ell}N^{1 - \zeta} \eta)^2 \right]^{3/2}} \d\left(2^{-\ell} N^{1 - \zeta} \eta \right) \nonumber \\
&= \ 2^{-\ell} N^{1 - \zeta} \int_{2^{-\ell}N^{1 - \zeta} \eta_0}^{2^{-\ell}N^{\e - \zeta}} \ \frac{\underline{\eta}}{\left[ 1 + \underline{\eta}^2 \right]^{3/2}} \d\underline{\eta}, \label{eq:changeofvariables}
\end{align}
where we employed a change of variables $\underline{\eta} = 2^{-\ell} N^{1 - \zeta} \eta$. We note the integrand in \eqref{eq:changeofvariables} is increasing in $\eta$ for an interval $[0, L]$ with $L = O(1)$. Thus, up to an $O(1)$ term the integral in \eqref{eq:changeofvariables} is bounded above by
\begin{align}
O \left( \frac{L}{(1 + L^2)^{3/2}} \right) \ = \ O(1)
\end{align}
for $N \gg 1$ if we choose $\e < \zeta$. By \eqref{eq:accumulationeigenvalues}, this implies
\begin{align}
N^{-1} \sum_{k \in \mathscr{I}_{2,\ell}} \ \int_{\eta_0}^1 \ \frac{\eta}{\left[ (\lambda_{\alpha} - E)^2 + \eta^2 \right]^{3/2}} \d\eta \ \prec \ N^{-1} \sum_{k \in \mathscr{I}_{2,\ell}} 2^{-\ell} N^{1 - \zeta} \ \prec \ 1.
\end{align}
Because $\ell = 0, \ldots, O(\log N)$, the contribution from eigenvalues $\lambda_k$ that are distance between $N^{-1 + \zeta}$ and $2$ from the energy $E$ is $O_{\prec}(\log N) = O_{\prec}(1)$.

We now study the eigenvalues $\lambda_k$ with the index $k \in \mathscr{I}_1$. To control this term, we simply compute the following estimate:
\begin{align}
\frac{1}{N} \sum_{k \in \mathscr{I}_1} \ \int_{\eta_0}^1 \ \frac{\eta}{\left[ |\lambda_k - E|^2 + \eta^2 \right]^{3/2}}\d\eta \ \prec \ \frac{1}{N} \int_{\eta_0}^1 \ \frac{\eta}{\eta^3} \d\eta \ \leq \ \frac{1}{N} + \frac{1}{N \eta}, 
\end{align}
where we used \eqref{eq:accumulationeigenvalues} to estimate the size $|\mathscr{I}_1| \prec 1$. For eigenvalues $\lambda_k \not\in \mathscr{I}_1, \mathscr{I}_{2,\ell}$ for any $\ell = 0, \ldots, O(\log N)$, we know $|\lambda_k - E| > 2$, in which case we have the trivial estimate
\begin{align}
\frac{1}{N} \sum_{\lambda_k \not\in \mathscr{I}_1, \mathscr{I}_{2,\ell}} \ \frac{1}{\left| \lambda_k(t) - z \right|} \ \prec \ \frac{1}{|\lambda_k(t) - E|} \ \prec \ 1.
\end{align}
This completes the proof of Proposition \ref{prop:consequenceslocallawOUlimit}.
\end{proof}
\begin{remark}
Instead of partitioning the spectrum of $X_{\ast}(t)$ into the sets $\mathscr{I}_1$ and $\mathscr{I}_2$, we may also appeal to a weak level repulsion estimate proved in \cite{Y3}.
\end{remark}
%
%
%
\section{Proof of Theorem \ref{theorem:averagedcorrelation}}
\subsection{Preliminary Estimates}
We first record two important estimates to control Green's functions of perturbed linearized covariance matrices.
\begin{lemma} \label{lemma:perturbedbounds}
Fix a positive integer $n \geq 0$. For any spectral parameter $z = E + i \eta \not\in \R$ with $|E| > \e$ for a fixed $\e > 0$, and a short time $t \leq D^{-1/4}$, we have
\begin{align}
\sup_{\theta \in [0,1]^n} \sup_{Y \in \mathscr{X}^n} \ \Gamma \left( z; X(t) + d^{-1/2} \theta \cdot Y \right) \ \prec \ 1 + \frac{1}{N \eta}. \label{eq:perturbedGFsupbound}
\end{align}
Here, the function $\Gamma(z; H)$ denotes the entry-wise maximum of the Green's function $G(z; H) = (H - z)^{-1}$ of the matrix $H$ as used before. Here, we allow either $d = d_b$ or $d = d_w$, without a change to the estimate.
\end{lemma}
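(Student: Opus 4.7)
The plan is to iterate the second resolvent identity and to exploit the sparsity of the perturbation $V := d^{-1/2}\theta\cdot Y = d^{-1/2}\sum_{s=1}^n \theta_s Y_s$. Abbreviating $G_0 := (X(t) - z)^{-1}$ and $G := (X(t) + V - z)^{-1}$, the identity $G = G_0 - G_0 V G$ iterates to the Neumann series
\begin{equation*}
G \;=\; \sum_{k \geq 0} G_0(-V G_0)^k,
\end{equation*}
whose entry-wise convergence I will control using the input $\Gamma_t(z) \prec 1 + 1/(N\eta)$ supplied by Proposition \ref{prop:consequenceslocallawOUlimit}.

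The decisive structural input is that each $\xi_{ij}^{mn} \in \mathscr{X}$ has at most eight nonzero entries, all of magnitude one, so $V$ is supported on at most $8n$ entries of magnitude bounded by $d^{-1/2}$. When one expands $\bigl((G_0 V)^k G_0\bigr)_{ij}$ as a sum over intermediate indices, only tuples that traverse nonzero positions of $V$ at each $V$-step contribute; there are at most $(8n)^k$ such tuples, and each contributes a product of $k+1$ Green's function entries bounded by $\Gamma_t(z)$ and $k$ entries of $V$ bounded by $d^{-1/2}$. This gives, uniformly in $\theta \in [0,1]^n$ and $Y \in \mathscr{X}^n$,
\begin{equation*}
\bigl|\bigl((G_0 V)^k G_0\bigr)_{ij}\bigr| \;\leq\; \Gamma_t(z)\bigl(8n\, d^{-1/2}\, \Gamma_t(z)\bigr)^k.
\end{equation*}

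Since $n$ is fixed and $d \geq N^{\gamma}$, on the spectral regime where $\Gamma_t(z) = O_{\prec}(1)$ the small factor $8n\, d^{-1/2}\Gamma_t(z)$ is stochastically dominated by $N^{-\gamma/4}$, so the series converges geometrically with high probability. Summing yields
\begin{equation*}
\Gamma\bigl(z;\, X(t) + V\bigr) \;\leq\; \frac{\Gamma_t(z)}{1 - 8n\, d^{-1/2}\Gamma_t(z)} \;\prec\; 1 + \frac{1}{N\eta},
\end{equation*}
and because the intermediate estimate never used the specific choice of $\theta$ or $Y$, the supremum in \eqref{eq:perturbedGFsupbound} is preserved. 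The argument is insensitive to whether $d = d_b$ or $d = d_w$, since $d_w/d_b = \alpha$ is a bounded constant.

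The main obstacle is precisely the reason one must work entrywise rather than in operator norm: a Neumann expansion using $\|\cdot\|_{\mathrm{op}}$ would require $\|V\|_{\mathrm{op}}\,\|G_0\|_{\mathrm{op}} \lesssim n\,d^{-1/2}\eta^{-1} < 1$, which fails on the bulk scales $\eta \gtrsim N^{-1+\delta}$ where the lemma must eventually be applied. The sparsity accounting above replaces one factor of $\|G_0\|_{\mathrm{op}} = \eta^{-1}$ at each iteration by the much smaller $\Gamma_t(z)$ from Proposition \ref{prop:consequenceslocallawOUlimit}, and the fact that each $\xi_{ij}^{mn}$ has only $O(1)$ nonzero entries is exactly what produces this gain.
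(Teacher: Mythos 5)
Your argument is correct and rests on the same two pillars as the paper's proof — the $O(1)$ sparsity of each switching matrix $\xi_{ij}^{mn}$ together with the a priori entrywise bound \eqref{eq:GFsupnorm} from Proposition \ref{prop:consequenceslocallawOUlimit} — but the mechanism differs in a way worth noting. The paper stops at a first-order Taylor estimate, using the derivative formula \eqref{eq:derivativeformulaGF} ($\partial_X G = -GXG$) and bounding the resulting $GXG$ entrywise by $\Gamma^2$; strictly speaking the gradient there is evaluated at an intermediate \emph{perturbed} matrix, yet it is bounded by the unperturbed $\Gamma$, a point the paper leaves implicit. Your full Neumann expansion $G=\sum_k G_0(-VG_0)^k$ repairs exactly this: every factor is the unperturbed $G_0$, so the estimate is self-contained, at the price of needing the geometric-series condition $d^{-1/2}\Gamma_t(z)\ll 1$. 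Two caveats. First, that condition restricts you to $N\eta \gtrsim d^{-1/2+\delta}$ (your phrase ``where $\Gamma_t(z)=O_\prec(1)$'' is even more restrictive than what your bound actually gives); since the lemma is stated for all $\eta\neq 0$, you should add the same reduction the paper invokes — restricting to $\eta\gg N^{-1}$ by the bootstrap/monotonicity of $|G_{ij}(E+i\eta)|$ in $\eta$ — or note that the remaining scales follow deterministically from the bound at $\eta\sim N^{-1}$; also justify termination of the series by bounding the remainder $(G_0V)^KG$ entrywise via $\|G\|_{\mathrm{op}}\leq \eta^{-1}$ and taking $K$ a large constant. Second, \eqref{eq:GFsupnorm} is stated for the Green's function of the covariance matrix $X_\ast(t)$, while your $G_0$ is the resolvent of the linearization $X(t)$; the paper is equally loose here, and the transfer uses $|E|>\e$ and the block formula relating $(X(t)-z)^{-1}$ to $(X_\ast(t)-z^2)^{-1}$, so you may keep it but should say so. Your observation that an operator-norm Neumann expansion fails at bulk scales, forcing the entrywise accounting, is accurate and is precisely why both proofs work entrywise.
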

\begin{proof}
Before we proceed with any calculations, we reduce the problem to the following three assumptions.
\begin{itemize}
\item First, by the following relation which holds for any real matrix $H$:
\begin{align}
G(\overline{z}; H) \ = \ \overline{G(z; H)} 
\end{align}
where the RHS is the entry-wise complex-conjugate of the Green's function $G(z; H)$, we may assume $\eta > 0$.
\item Second, we may assume $|E| \prec 1$ by the high-probability $O(1)$ bound on eigenvalues of normalized adjacency matrices of biregular graphs (see \cite{DJ}) and the following  perturbation inequality for eigenvalues: 
\begin{align}
\lambda(V-W) \leq \| V - W \|_{\infty}, \nonumber
\end{align}
where $V, W$ are real symmetric matrices. Similarly, we may assume $\eta \prec 1$. Thus, we may assume $|z| \asymp 1$.
\item Lastly, by the same bootstrapping method used in the proof of the local law in Lemma 4.10 in \cite{Y1}, we may work in the regime $\eta \gg N^{-1}$. 
\end{itemize}
Proceeding with the derivation of \eqref{eq:perturbedGFsupbound}, we first compute the following spatial derivatives of $G(z; X_\ast(t))$ via the resolvent formula:
\begin{align}
\partial_{X_1 \ldots X_n} \ G \ = \ (-1)^n \sum_{\sigma \in S_n} \ G X_{\sigma(1)} G \ldots X_{\sigma(n)} G. \label{eq:derivativeformulaGF}
\end{align}
Here, $S_n$ denotes the permutation group of $n$ letters. With this, we write down the following Taylor estimate:
\begin{align}
\left| G_{ij}(z; X(t) + d^{-1/2} \theta \cdot Y) \right| \ &\leq \ \left| G_{ij}(z; X(t)) \right| + \sup_{\theta \in [0,1]^n} \sup_{X \in \mathscr{X}^n} \ \left| \nabla_X G \cdot \theta \right| \nonumber \\
&\prec \ |z| \left( 1 + \frac{1}{N \eta} \right) + O_n\left( \sup_{X \in \mathscr{X}} \partial_X G \right).
\end{align}
To bound the error term above, we give the following estimate:
\begin{align}
\left( G Y G \right)_{ij} \ = \ \sum_{k, \ell} \ G_{ik} Y_{k \ell} G_{\ell j} \ \leq \ O \left( \Gamma(z; X_{\ast}(t)) \right)^2 \ \prec \ |z|^2 \left(1 + \frac{1}{N \eta} \right)^2 \ \prec \ 1 + \frac{1}{N \eta} + \frac{1}{(N \eta)^2}.
\end{align}
We note the implied constant in the big-Oh term may be chosen independent of $i,j$ as any matrix $X \in \mathscr{X}$ has finitely many non-zero terms, which are all bounded. Given the assumption $\eta \succ N^{-1}$, we know
\begin{align}
\frac{1}{(N \eta)^2} \ \prec \ \frac{1}{N \eta}.
\end{align}
Because $|z| \asymp 1$, we also know $|z|^2 = O(|z|)$. With this, we see
\begin{align}
\left| G_{ij}(z; X(t) + d^{-1/2} \theta \cdot Y) \right| \ \prec \ 1 + \frac{1}{N \eta} + 1 + \frac{1}{N \eta} + \frac{1}{(N \eta)^2} \ \prec \ |z| \left( 1 + \frac{1}{N \eta} \right),
\end{align}
which completes the derivation of \eqref{eq:perturbedGFsupbound}.
\end{proof}
Before we proceed with the proof of Theorem 4.4, we record the following consequences, which give the same Green's function estimate \eqref{eq:perturbedGFsupbound} for covariance matrices. Before we do so we introduce the following notation that will only be used in stating the consequences of Lemma \ref{lemma:perturbedbounds}.
\begin{notation}
Suppose $X(t) = (H(t), H(t)^\ast)$, so that $X_{\ast}(t) = H(t)^\ast H(t)$. We establish the following notation for the perturbed covariance matrices: for $\theta \in [0,1]^n$ and $Y \in \mathscr{X}^n$, we define
\begin{align}
X_{\ast}(t; \theta, Y) \ &:= \ \left( H(t) + d^{-1/2} \theta \cdot Y \right)^\ast \left( H(t) + d^{-1/2} \theta \cdot Y \right), \\
X^{\ast}(t; \theta, Y) \ &:= \ \left( H(t) + d^{-1/2} \theta \cdot Y \right) \left( H(t) + d^{-1/2} \theta \cdot Y \right)^{\ast}.
\end{align}
\end{notation}
We record the following consequence for possible future use as it will not be used in this thesis. The proof of the corollary follows immediately from Lemma \ref{lemma:perturbedbounds}, the spectral correspondence between the linearization $X(t)$ and the corresponding covariance matrices, and lastly the spectral representation of the Green's function.
\begin{corollary}
Assuming the setting of Lemma \ref{lemma:perturbedbounds}, the estimate \eqref{eq:perturbedGFsupbound} holds upon replacing $X(t) + d^{-1/2} \theta \cdot Y$ with $X_{\ast}(t; \theta, X)$ and $X^{\ast}(t; \theta, X)$.
\end{corollary}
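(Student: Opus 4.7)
The plan is to reduce the corollary to Lemma \ref{lemma:perturbedbounds} via the standard identification of the spectral data of a covariance matrix with that of its symmetric linearization. For $z = E + i\eta$ with $E < -\varepsilon$, the positive semidefiniteness of $X_\ast(t;\theta,Y)$ (and of $X^\ast(t;\theta,Y)$) gives $\on{dist}(z, \sigma(X_\ast)) \geq |E| > \varepsilon$, so that every entry of the Green's function satisfies the trivial bound $|G_{ij}| \leq \|G\|_{\on{op}} \leq \varepsilon^{-1} \prec 1 + (N\eta)^{-1}$, and likewise for $X^\ast$. Thus the interesting case is $E > \varepsilon$, which I handle by linearization.

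Write $\tilde X(t) := X(t) + d^{-1/2}\theta\cdot Y$, viewed as an element of $\mathscr{M}$, so that its block structure gives
\begin{align}
\tilde X(t)^2 \ = \ \begin{pmatrix} X^{\ast}(t;\theta,Y) & 0 \\ 0 & X_{\ast}(t;\theta,Y) \end{pmatrix}.
\end{align}
Indexing the lower-right $N \times N$ block by $\{M+1,\ldots,M+N\}$, this identifies the resolvent entries of $X_\ast(t;\theta,Y)$ with entries of $(\tilde X(t)^2 - z)^{-1}$, and similarly for $X^\ast$ in the upper-left block. Next apply the partial fraction identity $(\mu^2 - z)^{-1} = (2\sqrt{z})^{-1}\bigl[(\mu - \sqrt{z})^{-1} - (\mu + \sqrt{z})^{-1}\bigr]$, valid for $\mu \in \R$ and $z$ off the positive real axis with $\sqrt{\cdot}$ denoting the principal branch; combined with the spectral theorem for the real symmetric matrix $\tilde X(t)$, this upgrades to the operator identity
\begin{align}
\bigl( \tilde X(t)^2 - z \bigr)^{-1} \ = \ \frac{1}{2\sqrt{z}} \left[ \bigl( \tilde X(t) - \sqrt{z} \bigr)^{-1} \ - \ \bigl( \tilde X(t) + \sqrt{z} \bigr)^{-1} \right].
\end{align}
This reduces the desired entrywise bound to a bound on the two Green's functions $G(\pm\sqrt{z}; \tilde X(t))$.

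It remains only to verify that Lemma \ref{lemma:perturbedbounds} applies at the spectral parameters $\pm\sqrt{z}$. Assuming (by the same reductions used in the proof of Lemma \ref{lemma:perturbedbounds}) that $|z| \asymp 1$, for $E > \varepsilon$ an elementary computation with the principal branch yields $|\on{Re}(\sqrt{z})| \asymp 1$ and $|\on{Im}(\sqrt{z})| \asymp \eta$. For the term involving $-\sqrt{z}$, the reality of $\tilde X(t)$ and the conjugation symmetry $G(\bar w; H) = \overline{G(w;H)}$ allow us to replace $-\sqrt{z}$ by $-\overline{\sqrt{z}}$, which lies in the upper half plane with real part of magnitude still bounded below by a positive constant. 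Lemma \ref{lemma:perturbedbounds} then yields $\Gamma(\pm\sqrt{z}; \tilde X(t)) \prec 1 + (N\eta)^{-1}$ uniformly in $\theta$ and $Y$, and dividing by $|2\sqrt{z}| \asymp 1$ completes the bound. The main (and essentially only nontrivial) point is this last verification that the principal branch does not destroy the condition $|\on{Re}(w)| > \varepsilon'$ required by Lemma \ref{lemma:perturbedbounds} at $w = \pm\sqrt{z}$; this is precisely the role played by the hypothesis $|E| > \varepsilon$.
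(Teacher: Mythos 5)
Your argument is correct and follows essentially the same route the paper sketches in its one-sentence justification: the spectral correspondence between the linearization and the covariance matrices (made explicit in your block/partial-fraction identity relating $(\tilde X(t)^2 - z)^{-1}$ to $G(\pm\sqrt{z};\tilde X(t))$) together with Lemma \ref{lemma:perturbedbounds} applied at $\pm\sqrt{z}$, where $|\on{Re}\sqrt{z}|\asymp 1$ and $\on{Im}\sqrt{z}\asymp\eta$. Your treatment of the $E<-\e$ case and of $-\sqrt{z}$ via the conjugation symmetry simply fills in details the paper leaves implicit.
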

\subsection{Proof of Theorem \ref{theorem:averagedcorrelation}}
We are now in a position to prove Theorem 4.4. To do so, we rely on the following result concerned with universality of averaged correlation functions in the bulk. The result is taken as Lemma 5.4 in Section 5 in \cite{BHKY}; for a proof, we refer to Theorem 6.4 in \cite{EYY}.
\begin{theorem} \label{theorem:tracecorrelationtoaveragedcorrelation}
Suppose $H_1$ and $H_2$ are two random matrix ensembles of equal dimension $N$, and denote their Green's functions by $G_1(z)$ and $G_2(z)$, respectively. 

Fix a positive integer $n > 0$, and fix a sequence of positive integers $k_1, \ldots, k_n$. Fix a (small) constant $\beta > 0$. For a scale $\eta \in [N^{-1 - \beta}, N^{-1}]$, we fix a sequence of complex numbers $z_j^m = E_j^m \pm i \eta$ for $j \in [[1, k_m]]$ and $m \in [[1, n]]$. Here, we stipulate the energies $E_j^m \in \mathscr{I}_{\on{linear},\e}$ are in the bulk of the Marchenko-Pastur law. Moreover, the signs in the imaginary part of $z_j^m$ are arbitrary.

Let $\varphi \in C_c^{\infty}(\R^n)$ be a smooth function with compact support such that for any multi-index $\nu = (\nu_1, \ldots, \nu_n)$ with $1 \leq |\nu| \leq 4$, the following gradients estimates hold for any $\omega > 0$ fixed and sufficiently small:
\begin{align}
\sup_{x \in [-N^{\omega}, N^{\omega}]} \ \left| \partial^\nu \varphi(x) \right| \ &\leq \ N^{O(\omega)}, \label{eq:gradientestimate1} \\
\sup_{x \in [-N^2, N^2]} \ \left| \partial^{\nu} \varphi(x) \right| \ &\leq \ N^{O(1)}. \label{eq:gradientestimate2}
\end{align}
Lastly, suppose the following estimate holds:
\begin{align}
\left| \E \varphi \left( N^{-k_1} \on{Tr} \left( \prod_{j = 1}^{k_1} \ G_1(z_j^1) \right), \ldots, N^{-k_n} \on{Tr} \left( \prod_{j = 1}^{k_n} \ G_1(z_j^n) \right) \right) \ - \ \E \varphi(G_1 \rightarrow G_2) \right| \ = \ O \left( N^{-\delta/2 + O(\beta)} \right), \label{eq:tracecorrelation}
\end{align}
where the notation $G_1 \rightarrow G_2$ denotes switching all terms depending on $G_1$ to the corresponding terms depending on $G_2$, and the implicit constant is allowed to depend on all data in the statement of this theorem except the dimension $N$. Then, the averaged bulk eigenvalue correlation statistics of $H_1$ and $H_2$ agree in the sense of Definition 4.1.
\end{theorem}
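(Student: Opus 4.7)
The plan is to reduce the averaged $n$-point correlation function comparison to the trace-product hypothesis \eqref{eq:tracecorrelation} via a standard smoothing argument based on the Poisson-kernel representation of $\operatorname{Im} G$ together with the Helffer–Sjöstrand calculus. The basic idea is that at a regularization scale $\eta$ slightly smaller than $1/N$, the imaginary part of the Green's function $\tfrac{1}{N}\operatorname{Im}\operatorname{Tr} G(E+i\eta)$ is a Poisson-smoothed empirical density, and tensorizing this $n$ times recovers the $n$-point correlation function tested against $\varphi$ up to a smoothing error that is small once the $E'$-average in Definition \ref{definition:averagedcorrelation} is taken.

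First I would fix $\eta = N^{-1-\beta}$ with $\beta$ small and rewrite the inner integral in Definition \ref{definition:averagedcorrelation} as an integral of $\varphi$ (after the rescaling $E' + x_j/(N\varrho_\infty(E_0))$) against a product of single-energy Poisson densities $\tfrac{1}{\pi N}\operatorname{Im}\operatorname{Tr} G_i(z_j)$ with $z_j = E' + x_j/(N\varrho_\infty(E_0)) + i\eta$. The averaging of the Poisson kernel over $E'$ in the window of size $b = N^{-1+c}$ with $b\gg\eta$ converts it into an effective box indicator, so the smoothing error is controlled by a positive power of $\eta/b$ and is thus $o(1)$. The distinction between the correlation function $\varrho^{(n)}$ (which uses distinct indices) and a product of traces (which includes diagonal coincidences) is resolved by inclusion–exclusion; the diagonal corrections are $O(1/N)$ times lower-order correlations and are absorbed into the error.

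Second, I would convert single-energy $\operatorname{Im}$-products into the multi-energy trace products $N^{-k_m}\operatorname{Tr}\bigl(\prod_j G_i(z_j^m)\bigr)$ that appear in \eqref{eq:tracecorrelation}: expanding $\operatorname{Im} G(z) = (G(z) - G(\bar z))/(2i)$ and applying the resolvent identity $G(z)G(w)=(G(z)-G(w))/(z-w)$ produces such multi-energy traces as linear combinations. The resulting composite test function $\widetilde{\varphi}$ built from $\varphi$ inherits the gradient bounds required by \eqref{eq:gradientestimate1} and \eqref{eq:gradientestimate2}, because $\varphi$ was assumed smooth with compact support. Applying the hypothesis \eqref{eq:tracecorrelation} then gives that the difference of the expectations under $H_1$ and $H_2$ is $O(N^{-\delta/2 + O(\beta)})$, which is $o(1)$ provided $\beta$ is chosen small relative to $\delta$ and $c$.

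The main obstacle is the careful bookkeeping of errors and the splitting of the expectation into a typical event and an exceptional event. On the typical event the local law gives $|N^{-k_m}\operatorname{Tr}\prod_j G_i(z_j^m)| \prec N^{O(\omega)}$, where the estimate \eqref{eq:gradientestimate1} applies; on the complementary exceptional event one has only the deterministic bound $\|G(z)\|\le \eta^{-1} \le N^{1+\beta}$, and \eqref{eq:gradientestimate2} is needed to ensure that the contribution is still polynomially bounded, multiplied by a super-polynomially small probability coming from the local law. Balancing the three scales $\eta$, $b$, and the cutoff $N^\omega$ so that every approximation error, together with the $O(N^{-\delta/2 + O(\beta)})$ from \eqref{eq:tracecorrelation}, is simultaneously $o(1)$ is the delicate quantitative core of the argument, but once the scales are properly calibrated ($\beta \ll c$ and $\omega \ll \delta$) each contribution is straightforward.
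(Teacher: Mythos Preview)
The paper does not give its own proof of this theorem: immediately after the statement it says the result is taken as Lemma~5.4 in \cite{BHKY} and refers to Theorem~6.4 in \cite{EYY} for a proof. Your outline is consistent with the standard argument in those references --- Poisson-kernel smoothing of the empirical spectral measure at scale $\eta=N^{-1-\beta}$, control of the smoothing error by the $E'$-average over a window $b\gg\eta$, inclusion--exclusion to pass between $\varrho^{(n)}$ and products of traces, and the typical/exceptional splitting that explains why both gradient bounds \eqref{eq:gradientestimate1} and \eqref{eq:gradientestimate2} are needed.

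One point in your second paragraph is slightly garbled. You write that expanding $\operatorname{Im} G=(G-\overline{G})/(2i)$ and applying the resolvent identity $G(z)G(w)=(G(z)-G(w))/(z-w)$ \emph{produces} the multi-resolvent traces $N^{-k_m}\on{Tr}\bigl(\prod_j G(z_j^m)\bigr)$. The resolvent identity runs the other way: it \emph{reduces} a trace of a product to a difference of single traces. The reason the hypothesis \eqref{eq:tracecorrelation} is stated for general $k_m$ is not that the leading-order correlation observable is itself such a product, but that the approximation of the sharp observable by the Poisson-smoothed one generates error terms involving $\partial_E^k\,\on{Im}\,\on{Tr}\,G$, i.e.\ $\on{Tr}\,G^{k+1}$ and, after polarization, traces of products of resolvents at nearby spectral parameters. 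The hypothesis is formulated broadly enough to absorb all of these at once. This is a bookkeeping correction rather than a gap; with that reorganization your sketch matches the argument in \cite{EYY}.
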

In particular, to prove Theorem 4.4, it will suffice to show that the estimate \eqref{eq:tracecorrelation} holds in our matrix ensembles $\mathscr{X}(0)$ and $\mathscr{X}(t)$ for short times; this is the content of the following proposition.
\begin{prop} \label{prop:derivetracecorrelation}
Fix a small constant $\zeta > 0$, and suppose $t \in [0, N^{-1 - \zeta} D^{1/2}]$. Assuming the setting of Theorem \ref{theorem:tracecorrelationtoaveragedcorrelation} up until the gradient estimates \eqref{eq:gradientestimate1} and \eqref{eq:gradientestimate2} with the following random matrix ensembles of linearized covariance matrices:
\begin{align}
H_1 \ &= \ \mathscr{X}(0), \quad H_2 \ = \ \mathscr{X}(t).
\end{align}
Then the estimate \eqref{eq:tracecorrelation} holds.
\end{prop}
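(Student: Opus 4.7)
The plan is to apply the short-time stability estimate of Proposition~\ref{prop:shorttimeestimates} to the smooth observable
\[
F(X) \ = \ \varphi\!\left(N^{-k_1} \on{Tr} \prod_{j=1}^{k_1} G(z_j^1; X), \ \ldots, \ N^{-k_n} \on{Tr} \prod_{j=1}^{k_n} G(z_j^n; X)\right),
\]
where $G(z;X) = (X - z)^{-1}$ is the Green's function of the linearization $X$. With this choice, $\E F(X(0)) - \E F(X(t))$ is exactly the quantity inside the absolute value on the LHS of \eqref{eq:tracecorrelation}, so it suffices to bound the derivative seminorms $\|\partial^i F\|_{r(\e),s}$ for $i = 1, \ldots, 4$ and $s \in [0,t]$ well enough that Proposition~\ref{prop:shorttimeestimates} delivers the desired $O(N^{-\delta/2 + O(\beta)})$ bound.

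To control $\|\partial^i F\|_{r(\e),s}$ I expand via the chain rule. Each directional derivative along a direction $\xi \in \mathscr{X}$ either hits the outer $\varphi$, producing a factor of $\partial^\alpha \varphi$, or hits one of the inner traces, producing an additional Green's function factor via the resolvent identity $\partial_{X_{ab}} G = -G E_{ab} G$. Since each $\xi_{ij}^{mn} \in \mathscr{X}$ has $O(1)$ nonzero entries, the sparse sum of such matrix-unit derivatives can be bounded entrywise by a product of $O_i(1)$ perturbed Green's function entries. Lemma~\ref{lemma:perturbedbounds} then bounds each such entry by $1 + 1/(N\eta) \prec N^{\beta}$ using $\eta \in [N^{-1-\beta}, N^{-1}]$. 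On the high-probability event provided by Proposition~\ref{prop:consequenceslocallawOUlimit}, the traces $N^{-k_m} \on{Tr} \prod_j G(z_j^m)$ themselves lie in $[-N^\omega, N^\omega]$ so that the good gradient bound \eqref{eq:gradientestimate1} applies to each $\partial^\alpha \varphi$. Putting this all together yields $\|\partial^i F\|_{r(\e),s} \prec N^{C(\beta + \omega)}$ for some absolute constant $C$.

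Inserting this into Proposition~\ref{prop:shorttimeestimates} with $t \le N^{-1-\zeta} D^{1/2}$ gives
\[
|\E F(X(t)) - \E F(X(0))| \ \prec \ D^{-1/2} N^{1+\e} \cdot t \cdot N^{C(\beta+\omega)} \ \le \ N^{\e - \zeta + C(\beta + \omega)},
\]
which is $O(N^{-\delta/2 + O(\beta)})$ upon choosing $\e, \omega$ sufficiently small relative to the fixed $\zeta$, and then $\delta$ small enough relative to $\zeta - \e - C\omega$. This matches \eqref{eq:tracecorrelation} and completes the reduction.

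The main obstacle is the second step: the traces $N^{-k_m} \on{Tr} \prod_j G(z_j^m; X(s))$ are \emph{a priori} as large as $N^{O(1)}$, because in the worst case each Green's function entry is merely bounded by $\eta^{-1} \leq N^{1+\beta}$, so one cannot naively invoke \eqref{eq:gradientestimate1}. To circumvent this, I split the expectation defining $\|\partial^i F\|_{r(\e),s}$ according to the high-probability event $\Xi$ on which the local law and rigidity of Proposition~\ref{prop:consequenceslocallawOUlimit} hold. On $\Xi$ the traces lie in the good regime and the analysis of the previous paragraph applies; on the complement $\Xi^c$ I use the trivial deterministic bound $\|G\|_{\max} \leq \eta^{-1} \leq N^{1+\beta}$ together with the weak gradient estimate \eqref{eq:gradientestimate2}, giving a polynomial-in-$N$ integrand which, multiplied by $\mathbb{P}(\Xi^c) \prec N^{-A}$ for any fixed $A$, contributes a negligible amount to the $L^{r(\e)}$ seminorm provided $r(\e)$ is taken sufficiently large as a function of $\e$. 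The combinatorial bookkeeping for the derivative expansion—tracking how each of the four derivatives can distribute across the $\sum_m k_m$ Green's functions and the outer test function—follows the same pattern as in the Wigner and sparse-matrix analyses of \cite{BHKY}, and is the only place where the $C^4$ regularity of $F$ is used.
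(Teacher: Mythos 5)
Your proposal is correct and follows essentially the same route as the paper: the same observable $F$ built from $\varphi$ and the traces of Green's function products, Proposition \ref{prop:shorttimeestimates} for the time comparison, the chain-rule/resolvent expansion with the sparsity of the switching directions, Lemma \ref{lemma:perturbedbounds} to bound perturbed Green's function entries by $1 + (N\eta)^{-1}$, and the dichotomy between the high-probability event (where \eqref{eq:gradientestimate1} applies) and its complement (handled by \eqref{eq:gradientestimate2} and deterministic $\eta^{-O(1)}$ bounds, absorbed into the $L^{r(\e)}$ seminorm). Your explicit appeal to Proposition \ref{prop:consequenceslocallawOUlimit} to justify that the trace arguments lie in the good window for \eqref{eq:gradientestimate1} is a minor point the paper leaves implicit, but the argument is the same.
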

\begin{proof}
We begin by defining the following function:
\begin{align}
F(X(t)) \ = \ \varphi \left( N^{-k_1} \on{Tr} \left( \prod_{j = 1}^{k_1} \ G_1(z_j^1) \right), \ldots, N^{-k_n} \on{Tr} \left( \prod_{j = 1}^{k_n} \ G_1(z_j^n) \right) \right).
\end{align}
This will be treated as a function of $X(t)$ for times $t = 0$ and another time $t \leq N^{-1 - \zeta} D^{1/2}$. By Theorem 5.2 we have the following short-time stability for the expectation of $F(X(t))$:
\begin{align}
\E F(X(t)) - \E F(X(0)) \ = \ O \left( D^{-1/2} N^{1 + \zeta} \max_{1 \leq i \leq 4} \ \int_0^t \ \| \partial^i F \|_{r,s} \ \text{d} s \right).
\end{align}
Thus, it will suffice to prove the following gradient estimate on the function $F$ for times $t \leq N^{-1 - \zeta} D^{1/2}$:
\begin{align}
\max_{1 \leq i \leq 4} \ \| \partial^i F \|_{r,s} \ = \ O \left( N^{\zeta/2 + O(\beta)}  \right)\label{eq:gradientestimateshorttimestability}
\end{align}
and then choose $\zeta, \beta > 0$ sufficiently small. For simplicity and clarity of presentation, we will focus on the case $n = 1$ and $k_1 = 1$; the argument for general $n > 0$ follows similarly.

We now differentiate the function $F(X)$ from definition. In what follows, the Green's function $G(z)$ will denote the Green's function of a perturbed linearized covariance matrix as in Lemma \ref{lemma:perturbedbounds}. In particular, by the chain-rule we have
\begin{align}
\partial_{X_1 \ldots X_i} F \left(X(t) + d^{-1/2} \theta \cdot X \right) \ &= \ \partial_{X_1 \ldots X_i} \varphi \left( \frac{1}{N} \on{Tr} G(z) \right) \\
&= \ \partial_{X_2 \ldots X_i} \left[ \partial_{X_1} \varphi \left( \frac{1}{N} \on{Tr} G(z) \right) \times \frac{1}{N} \on{Tr} \left( \partial_{X_1} G \right) \right] \\
&= \ O_i \left( \max_{1 \leq k \leq i} \ | \varphi^{(k)} | \times \frac{1}{N} \max_{1 \leq k \leq i} \ \left| \on{Tr} \left(\partial_{X_{j_1} \ldots X_{j_k}} G(z) \right) \right| \right).\label{eq:testfunctiongradientbound}
\end{align}
Here, \eqref{eq:testfunctiongradientbound} follows from a repeated application of the Leibniz rule and chain rule for differentiating along switching matrices $X_j \in \mathscr{X}$. We now use the a priori gradient estimate \eqref{eq:gradientestimate1} to bound the first term inside the big-Oh term in \eqref{eq:testfunctiongradientbound} to deduce
\begin{align}
\partial_{X_1 \ldots X_i} F \left((X(t) + d^{-1/2} \theta \cdot X \right) \ = \ O_i \left( \frac{N^{O(\omega)}}{N} \max_{1 \leq k \leq i} \left| \on{Tr} \left( \partial_{X_{j_1} \ldots X_{j_k}} G(z; X(t) + d^{-1/2} \theta \cdot X) \right) \right| \right).
\end{align}
We now bound the trace term appealing back to the differentiation identity \eqref{eq:derivativeformulaGF} which we rewrite as follows:
\begin{align}
\partial_{X_1 \ldots X_i} G \ = \ (-1)^i \sum_{\sigma \in S_i} \ G X_{\sigma(1)} G \ldots X_{\sigma(i)} G. 
\end{align}
Because $i \in [[1, 4]]$ and each $X_{j}$ has at most $O(1)$ non-vanishing entries, we deduce the following straightforward gradient-trace bound, which will help us control the gradient bound \eqref{eq:testfunctiongradientbound}:
\begin{align}
\frac{1}{N} \on{Tr} \left( \partial_{X_1 \ldots X_i} G \right) \ &= \ O \left( \frac{1}{N} \sum_{j = 1}^N \ i! \left[ \max_{\sigma \in S_i} \ \left| G X_{\sigma(1)} \ldots X_{\sigma(i)} G \right| \right]_{jj} \right) \\
&= \ O_i \left( \Gamma^{O_i(1)} \right).
\end{align}
Combining this estimate with the bounds on perturbed Green's functions \eqref{eq:perturbedGFsupbound} in Lemma \ref{lemma:perturbedbounds}, we deduce the following bound
\begin{align}
\partial_{X_1 \ldots X_i} F \left( X(t) + d^{-1/2} \theta \cdot X \right) \ = \ O_i \left( \left( 1 + \frac{1}{N \eta} \right)^{O(1)} \right) \ \prec \ O \left( N^{O(\beta)} \right),
\end{align}
where we used the assumption $\eta \in [N^{-1-\beta}, 1]$ in the last big-Oh estimate. We note this bound holds only with high-probability as the inequality \eqref{eq:perturbedGFsupbound} in Lemma \ref{lemma:perturbedbounds} is a stochastic inequality. For the low-probability complement event, we will go back to the preliminary estimate \eqref{eq:testfunctiongradientbound} and apply straightforward bounds as follows, instead using \eqref{eq:gradientestimate2} as opposed to \eqref{eq:gradientestimate1}:
\begin{align}
\partial_{X_1 \ldots X_i} F \left( X(t) + d^{-1/2} \theta \cdot X \right) \ &= \ O_i \left( \max_{1 \leq k \leq i} \ |\varphi^{(k)}| \times \frac{1}{N} \max_{1 \leq k \leq i} \ \left| \on{Tr} \left( \partial_{X_{j_1} \ldots X_{j_k}} G(z) \right) \right| \right) \\
&= \ O_i \left( N^{O(1)} \eta^{-C} \right) \\
&= \ O_i \left( N^{O(1)} \right),
\end{align}
where $C = O(1)$ is a positive constant. Thus in taking an expectation in the definition of the $\|-\|_{r,s}$ norm, we have
\begin{align}
\| \partial^i F\|_{r,t} \ = \ O \left( N^{\zeta + O(\beta)} N^{-\zeta/r + O(1)} \right) \ = \ O \left( N^{\zeta/2 + O(\beta)} \right)
\end{align} 
upon taking the exponent $r > 0$ suitably small. Here, we drop the subscript $i$ from the big-Oh term because $i \in [[1, 4]]$ is drawn from a set of size $O(1)$. This completes the proof of Proposition \ref{prop:derivetracecorrelation} and thus the proof of Theorem 4.4.
\end{proof}
%
%
%
\section{Appendix}
\subsection{Moment estimates and Proposition \ref{prop:errorestimatessobolev}}
The necessary ingredients for the proof of Proposition \ref{prop:errorestimatessobolev} are moment bounds on the adjacency matrix entries. The first of these bounds is the following estimate, from which we will derive further moment bounds.
\begin{lemma} \label{lemma:pseudoindependenceadjacencymatrix}
Fix any $p = O(1) > 0$ and consider any $p$ vertices $(i_1, j_1), \ldots, (i_p, j_p) \in V_b$. Then, for any $x \in [1, M+N]$ and $y \in [1, M+N] \setminus \{(i_1, j_1), \ldots, (i_p, j_p)\}$, we have
\begin{align}
\E \left[ A_{i_1 j_1} \ldots A_{i_p j_p} A_{xy} \right] \ = \ O \left( \frac{d_b}{N} \right) \E \left[ A_{i_1 j_1} \ldots A_{i_p j_p} \right].
\end{align}
\end{lemma}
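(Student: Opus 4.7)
The plan is to interpret the lemma as a statement about conditional probabilities under the uniform measure $\mu_{\on{unif}}$ on $\Omega$ and to prove it via a simple edge-switching argument of the type used throughout \cite{BHKY} and \cite{Y1}.

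First I would reduce to counting. Let $\Omega_1 \subseteq \Omega$ be the set of graphs containing all of the edges $(i_1,j_1),\ldots,(i_p,j_p),(x,y)$, and let $\Omega_0 \subseteq \Omega$ be the set of graphs containing $(i_1,j_1),\ldots,(i_p,j_p)$ but not $(x,y)$. Under $\mu_{\on{unif}}$,
\begin{align}
\frac{\E[A_{i_1j_1}\cdots A_{i_pj_p}A_{xy}]}{\E[A_{i_1j_1}\cdots A_{i_pj_p}]} \ = \ \frac{|\Omega_1|}{|\Omega_0|+|\Omega_1|} \ \leq \ \frac{|\Omega_1|}{|\Omega_0|},
\end{align}
so it suffices to produce the bound $|\Omega_1|/|\Omega_0| = O(d_b/N)$. (The cases where $(x,y)$ is not a legal bipartite edge or coincides with some $(i_k,j_k)$ are trivial or excluded by hypothesis.)

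Next I would set up the switching. Given $G\in\Omega_0$, a \emph{forward switching} picks a pair of edges $(x,y'),(x',y)\in E(G)$ with $x'\neq x$, $y'\neq y$, neither in $\{(i_k,j_k)\}_{k=1}^p$, and $(x',y')\notin E(G)$; one then deletes $(x,y'),(x',y)$ and inserts $(x,y),(x',y')$ to land in $\Omega_1$ (biregularity is preserved and simplicity holds by construction). The \emph{reverse switching} from $G'\in\Omega_1$ picks an edge $(x',y')\in E(G')$ with $x'\neq x$, $y'\neq y$, $(x,y')\notin E(G')$, $(x',y)\notin E(G')$, and $(x',y')\notin\{(i_k,j_k)\}$. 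This sets up a bijection between forward-switching data on $\Omega_0$ and reverse-switching data on $\Omega_1$, and so by double counting
\begin{align}
|\Omega_0|\cdot \max_{G\in\Omega_0}(\text{\# forward}) \ \geq \ |\Omega_1|\cdot \min_{G'\in\Omega_1}(\text{\# reverse}).
\end{align}

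The counts are the main computation. For the forward direction, $(x,y')$ ranges over the $d_b$ edges at $x$ and $(x',y)$ over the $d_w$ edges at $y$, giving at most $d_b d_w$ forward switchings per $G\in\Omega_0$. For the reverse direction, the total edge count of $G'$ is $Md_b=Nd_w$; the constraints $(x,y')\notin E(G')$ and $(x',y)\notin E(G')$ each exclude at most $d_b d_w$ edges, and the constraints involving $x'\neq x$, $y'\neq y$, and avoiding $(i_k,j_k)$ contribute at most $O(d_b+d_w+p)$ further exclusions. Thus
\begin{align}
\min_{G'\in\Omega_1}(\text{\# reverse}) \ \geq \ Md_b - 2d_bd_w - O(d_b+d_w+p) \ = \ Md_b\bigl(1 - O(d_w/M)\bigr).
\end{align}
Since $d_w/M = d_b/N$ and $d_b \leq N^{2/3-\gamma}$, the parenthesized factor is $1-o(1)$. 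Combining,
\begin{align}
\frac{|\Omega_1|}{|\Omega_0|} \ \leq \ \frac{d_bd_w}{(1-o(1))Md_b} \ = \ (1+o(1))\frac{d_w}{M} \ = \ O\!\left(\frac{d_b}{N}\right),
\end{align}
using $Md_b=Nd_w$ in the last step. This is the desired bound.

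The main obstacle, which is genuinely mild here because $p=O(1)$, is controlling the error in the reverse count: one must verify that the excluded edges arising from the constraints $(x,y')\notin E(G')$ and $(x',y)\notin E(G')$ are of lower order than the total $Md_b$, which requires exactly the sparsity hypothesis $d_b\leq N^{2/3-\gamma}$ that forces $d_w/M = d_b/N = o(1)$. For larger $p$, one would need to iterate this estimate $p$ times, but since the $p$ in this lemma is fixed and $O(1)$, no additional care is needed.
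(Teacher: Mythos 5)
Your proof is correct, but it takes a genuinely different route from the paper. The paper's own argument never sets up switchings: it exploits the hard degree constraint $\sum_{y'} A_{xy'} = d_b$ (or $d_w$) to write $\E[A_{i_1j_1}\cdots A_{i_pj_p}] = d_b^{-1}\sum_{y'}\E[A_{i_1j_1}\cdots A_{i_pj_p}A_{xy'}]$, and then uses invariance of the uniform measure under relabeling of the $O(N)$ vertices not appearing in the fixed edges (exchangeability) to conclude that the $y'\notin I$ terms are all equal to $\E[A_{i_1j_1}\cdots A_{i_pj_p}A_{xy}]$, which immediately gives the bound $O(d_b/N)\E[A_{i_1j_1}\cdots A_{i_pj_p}]$; notably this needs no sparsity hypothesis at all. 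Your argument is the McKay-style forward/reverse switching double count between $\Omega_0$ and $\Omega_1$, in the spirit of the switching estimates in \cite{BHKY}: the bookkeeping is right (the bijection between forward and reverse data is valid because in a graph of $\Omega_1$ the special edges are present, so the deleted edges $(x,y')$, $(x',y)$ and the inserted edge $(x',y')$ are automatically non-special), the counts $\max(\#\text{forward})\leq d_bd_w$ and $\min(\#\text{reverse})\geq Md_b-2d_bd_w-O(d_b+d_w+p)$ are correct, and the positivity of the reverse count also quietly disposes of the degenerate case $\Omega_0=\emptyset$, which your displayed ratio $|\Omega_1|/(|\Omega_0|+|\Omega_1|)\leq|\Omega_1|/|\Omega_0|$ would otherwise leave open. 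What the two approaches buy: the paper's exchangeability argument is shorter and dimension-free in $d_b$, while your switching argument is more robust machinery — it gives $(1+o(1))$ control of the ratio of the two graph counts and extends naturally to estimates with forbidden edges — at the price of needing $d_w/M=d_b/N=o(1)$, which the standing assumption $d_b\leq N^{2/3-\gamma}$ supplies.
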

\begin{proof}
First, we may assume $(x,y) \in V_b$ or $(x,y) \in V_w$; otherwise, the estimate follows trivially. We derive the estimate for $(x,y) \in V_b$; the case $(x,y) \in V_w$ follows similarly. 

We now define the following for notational convenience:
\begin{align}
I(p) \ := \ \{(i_1 j_1), \ldots, (i_p, j_p) \}, \ \ \ A(p) \ := \ A_{i_1 j_1} \ldots A_{i_p j_p}.
\end{align}
Thus, we have
\begin{align}
\E \left[ A(p) \right] \ = \ \frac{1}{d_b} \E \left[ A(p) \sum_y \ A_{xy} \right] \ = \ \frac{1}{d_b} \E \left[ A(p) \sum_{y \in I} \ A_{xy} \right] \ + \ \frac{1}{d_b} \E \left[ A(p) \sum_{y \not\in I} \ A_{xy} \right].
\end{align}
We note $|I| = O(1)$ and thus $|\{ y \not\in I \}| = O(N)$. Moreover, noting $0< A_{ij} = O(1)$ for all $i,j$, and also noting the law of $A_{ij}$ under the uniform measure on graphs is invariant under relabeling vertices, we deduce
\begin{align}
\E \left[ A(p) \right] \ = \ O \left( \frac{1}{d_b} \right) \E \left[ A(p) A_{xy} \right] \ + \ O \left( \frac{N}{d_b} \right) \E \left[ A(p) \right],
\end{align}
from which the desired estimate follows clearly.
\end{proof}
\begin{remark}
The estimate given above is somewhat of an independence statement under mild conditions; this will help us compute estimates for moments of adjacency matrix entries. In particular, we deduce the following moment estimates.
\end{remark}
\begin{lemma} \label{lemma:momentestimatesadjacencymatrix}
Let $a,b$ be integers defined by
\begin{align}
\# \left\{ i,j,m,n \right\} \ = \ 4 - a, \ \ \ \# \left\{ i,j,m,n, k, \ell, p, q \right\} \ = \ 8 - b.
\end{align}
Then, we have
\begin{align}
\E \left[ A_{ij} A_{mn} \right] \ &= \ O \left( \frac{d_b}{N} \right)^{2 - \lfloor a/2 \rfloor}, \label{eq:secondmomentestimatesadjacencymatrix} \\
\E \left[ A_{ij} A_{mn} A_{k \ell} A_{pq} \right] \ &= \ O \left( \frac{d_b}{N} \right)^{4 - \lfloor b/2 \rfloor}. \label{eq:fourthmomentestimatesadjacencymatrix}
\end{align}
\end{lemma}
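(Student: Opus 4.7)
The plan is to combine the Boolean nature of the adjacency entries with iterated application of Lemma \ref{lemma:pseudoindependenceadjacencymatrix} to reduce each moment estimate to a simple combinatorial count.

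Since $A_{ij} \in \{0, 1\}$, powers collapse: $A_{ij}^k = A_{ij}$ for every $k \geq 1$. Hence, if $e_1, \ldots, e_{p'}$ denote the distinct edges among $(i_1, j_1), \ldots, (i_p, j_p)$, the product $A_{i_1 j_1} \ldots A_{i_p j_p}$ equals $A_{e_1} \ldots A_{e_{p'}}$. A straightforward induction on $p'$ using Lemma \ref{lemma:pseudoindependenceadjacencymatrix}, where at each step we apply the lemma with $(x,y)$ equal to the next distinct edge and $I$ equal to the set of already-selected edges, then yields
\begin{align}
\E \left[ A_{e_1} \ldots A_{e_{p'}} \right] \ = \ O \left( \frac{d_b}{N} \right)^{p'},
\end{align}
the base case $\E A_{ij} = d_b/N$ being immediate from the $d_b$-regularity of the graph on the black side.

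It remains to bound $p'$ from below: I claim that if the $p$ edges span $2p - a$ distinct indices, then $p' \geq p - \lfloor a/2 \rfloor$. Since any edge $(i,j) \in V_b$ has $i$ and $j$ drawn from disjoint index ranges, the $a$ coincidences split cleanly into $p - p_L$ coincidences among the left-indices $i_a$ and $p - p_R$ coincidences among the right-indices $j_a$, where $p_L, p_R$ are the respective distinct counts. Distinct edges determine (not necessarily distinct) left- and right-indices, so $p_L, p_R \leq p'$; consequently $a = (p - p_L) + (p - p_R) \geq 2(p - p')$, which yields the claim upon taking floors. Combined with the previous step and the inequality $d_b/N \leq 1$, we obtain
\begin{align}
\E \left[ A_{i_1 j_1} \ldots A_{i_p j_p} \right] \ = \ O \left( \frac{d_b}{N} \right)^{p - \lfloor a/2 \rfloor},
\end{align}
which specializes to \eqref{eq:secondmomentestimatesadjacencymatrix} at $p = 2$ and to \eqref{eq:fourthmomentestimatesadjacencymatrix} at $p = 4$ (with $a$ relabeled as $b$).

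The principal non-routine step is the combinatorial bound $p - p' \leq \lfloor a/2 \rfloor$; everything else reduces either to the Boolean identity $A_{ij}^2 = A_{ij}$ or to a direct application of Lemma \ref{lemma:pseudoindependenceadjacencymatrix}. Because the clean bipartite structure of $V_b$ makes left- and right-repetitions disjoint, I do not anticipate any substantial obstacle in any of the steps.
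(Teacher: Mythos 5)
Your overall strategy --- collapsing repeated entries via $A_{ij}\in\{0,1\}$ and then iterating Lemma \ref{lemma:pseudoindependenceadjacencymatrix} --- is the same as the paper's, and your counting inequality $p-p'\leq\lfloor a/2\rfloor$ is correct. The gap is in the induction that is supposed to give $\E[A_{e_1}\cdots A_{e_{p'}}]=O(d_b/N)^{p'}$. Lemma \ref{lemma:pseudoindependenceadjacencymatrix} does not apply merely because the appended factor is a \emph{new pair}: its proof (relabeling invariance in $y$) requires the endpoint $y$ of the appended entry $A_{xy}$ to avoid every index already appearing in the product, and edge-distinctness does not guarantee that such an ordering exists. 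If the new edge shares only its white endpoint with earlier edges you can rescue the step by flipping the orientation, i.e.\ taking $(x,y)\in V_w$ with $y$ the fresh black endpoint (the lemma covers this case, and $d_w/M=d_b/N$), but you never check the hypothesis, and when the new edge shares \emph{both} endpoints no orientation works. This genuinely occurs inside the fourth-moment bound: for the four distinct pairs $(i,j),(m,n),(i,n),(m,j)$ (a $4$-cycle, so $b=4$), every edge has both endpoints among the endpoints of the other three, so no ordering lets you apply the lemma to all four factors; your intermediate claim $O(d_b/N)^{4}$ for this configuration is not delivered by the lemma, and the induction as written stalls.

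The conclusion survives because you only need the exponent $p-\lfloor a/2\rfloor=\lceil V/2\rceil$, where $V$ is the number of distinct vertices involved. Run the induction only along a spanning forest of the graph formed by the distinct edges (peel leaf edges one at a time: a leaf endpoint is fresh relative to the remaining forest edges, flipping the orientation of $(x,y)$ when the fresh endpoint is black), and bound the cycle-closing factors crudely by $1$. This gives the exponent $V-C$, with $C$ the number of connected components, and since each component has at least two vertices, $V-C\geq\lceil V/2\rceil$, which is exactly the exponent in \eqref{eq:secondmomentestimatesadjacencymatrix} and \eqref{eq:fourthmomentestimatesadjacencymatrix} (for the $4$-cycle this gives exponent $3$, or exponent $2$ already from its two vertex-disjoint opposite edges, while only $2$ is needed). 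With this repair your argument becomes a more explicit version of the paper's proof, which rests on the same collapsing-plus-pseudo-independence idea but is only sketched.
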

\begin{proof}
We first note $A_{ii} = 0$ for all indices $i$, so we may assume the bounds $a \leq 2, b \leq 4$. Moreover we also note all moments of $A_{ij}$ are equal. Then, the desired estimates \eqref{eq:secondmomentestimatesadjacencymatrix} and \eqref{eq:fourthmomentestimatesadjacencymatrix} follow from Lemma \ref{lemma:pseudoindependenceadjacencymatrix}. We illustrate for the case $a = 1$; suppose, without loss of generality, that $i = m$. Thus, by Lemma \ref{lemma:pseudoindependenceadjacencymatrix}, we have
\begin{align}
\E \left[ A_{ij} A_{in} \right] \ = \ O \left( \frac{d_b}{N} \right) \E A_{ij} \ = \ O \left( \frac{d_b}{N} \right)^2 \ \leq \ O \left( \frac{d_b}{N} \right)^{3/2}.
\end{align}
\end{proof}
We now introduce the following notation for convenience:
\begin{align}
I_1 \ := \ I(ij,mn; A), \ \ \ I_2 \ := \ I(k\ell, pq; A), \\
J_{12} \ := \ J(\{ij, mn\}, \{k\ell, pq\}; A), \ \ \ I_{12} \ := \ I_1 I_2 J_{12}.
\end{align}
%
\begin{remark}
We recall that $I_1 = 0$ and $J_{12} = 0$ with low probability. We now derive the following estimates conditioning on exceptional events with regards to the simple switchings dynamics on $\wt{\Omega}$.
\end{remark}
\begin{lemma} \label{lemma:momentestimateslowprobability}
Let $a,b$ be defined as in Lemma \ref{lemma:momentestimatesadjacencymatrix}. Then, we have
\begin{align}
\E \left[ (A_{ij} A_{mn} + A_{in} A_{mj})(1 - I_1) \right] \ &= \ O \left( \frac{d_b}{N} \right)^{3 - a}, \label{eq:secondmomentestimateslowprobability} \\
\E \left[ (A_{ij} A_{mn} + A_{in} A_{mj})(A_{k \ell} A_{pq} + A_{kq} A_{p \ell}(1 - I_{12}) \right] \ &= \ O \left( \frac{d_b}{N} \right)^{5-b}. \label{eq:fourthmomentestimateslowprobability}
\end{align}
\end{lemma}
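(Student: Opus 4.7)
The plan is to reduce both estimates to the moment bounds in Lemma \ref{lemma:momentestimatesadjacencymatrix} and the pseudo-independence result in Lemma \ref{lemma:pseudoindependenceadjacencymatrix} by algebraically expanding $1 - I_1$ and $1 - I_{12}$. The key observation is that the entries $A_{ij}$ take values in $\{0,1\}$, so $A_{ij}^2 = A_{ij}$; as a consequence, when $A_{ij}A_{mn}$ multiplies $1 - I_1 = 1 - A_{ij}A_{mn}(1 - A_{in})(1 - A_{mj})$, the product collapses into a sum of monomials, each containing strictly more than the two original adjacency indicators. Each additional indicator contributes an extra factor of $d_b/N$ via Lemma \ref{lemma:pseudoindependenceadjacencymatrix}, and this is precisely what sharpens the $(d_b/N)^{2 - \lfloor a/2\rfloor}$ bound of \eqref{eq:secondmomentestimatesadjacencymatrix} into $(d_b/N)^{3 - a}$.

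For \eqref{eq:secondmomentestimateslowprobability}, I first observe that whenever $a \geq 1$, two of the indices $\{i,j,m,n\}$ must coincide (within the $V_b$ structure, this means $i = m$ or $j = n$); in either case a direct inspection of $I_1 = A_{ij}A_{mn}(1-A_{in})(1-A_{mj})$ shows that $I_1 = 0$ identically, so $1 - I_1 = 1$ and \eqref{eq:secondmomentestimatesadjacencymatrix} already yields the required exponent since $2 - \lfloor a/2\rfloor \geq 3 - a$ for $a \in \{1,2\}$. For $a = 0$ all four indices are distinct, and the identity
\begin{align}
A_{ij}A_{mn}(1 - I_1) \ = \ A_{ij}A_{mn}\bigl(A_{in} + A_{mj} - A_{in}A_{mj}\bigr)
\end{align}
reduces matters to bounding expectations of products of three or four distinct adjacency entries. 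Applying Lemma \ref{lemma:pseudoindependenceadjacencymatrix} iteratively produces the target bound $(d_b/N)^3$. The $A_{in}A_{mj}$ term is handled in the same way after swapping the roles of $\{ij,mn\}$ and $\{in,mj\}$.

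For \eqref{eq:fourthmomentestimateslowprobability}, I use the telescoping decomposition
\begin{align}
1 - I_{12} \ = \ (1 - I_1) \ + \ I_1(1 - I_2) \ + \ I_1 I_2 (1 - J_{12})
\end{align}
and treat the three pieces separately. The $(1 - I_1)$ and $(1 - I_2)$ pieces reduce, via the same algebraic identity as above applied to one of the two pairs of edges, to expectations of products of at least five distinct adjacency indicators; combining the fourth-moment bound \eqref{eq:fourthmomentestimatesadjacencymatrix} with one additional application of Lemma \ref{lemma:pseudoindependenceadjacencymatrix} yields the exponent $5 - b$. For the term involving $(1 - J_{12})$, I unfold the definition of $J_{12}$, which enforces vertex-disjointness of the two switching subgraphs; its complement splits into finitely many sub-cases in each of which at least one further shared vertex or edge appears, and each such case is absorbed into an extra factor of $d_b/N$ exactly as in the first estimate.

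The main obstacle is the combinatorial bookkeeping for \eqref{eq:fourthmomentestimateslowprobability}: one must enumerate the possible coincidence patterns among the eight indices $i,j,m,n,k,\ell,p,q$ and, for each pattern, verify that the number of distinct adjacency entries left over after expansion of $1 - I_{12}$ is at least $5 - b$. The degenerate configurations in which several of the three terms in the telescoping decomposition vanish simultaneously, as well as those in which $J_{12}$ already vanishes because of coincidences between the two edge pairs, require the most care; the redeeming feature is that Lemma \ref{lemma:pseudoindependenceadjacencymatrix} is robust enough that one gains at least one factor of $d_b/N$ per additional edge, uniformly over patterns, so the counting can be carried out case by case without any further analytic input.
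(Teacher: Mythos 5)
Your argument is correct and is, in substance, the paper's argument: expand the complement of the switching indicator into a sum of extra adjacency entries, gain a factor $d_b/N$ per extra entry via Lemma \ref{lemma:pseudoindependenceadjacencymatrix}, and fall back on Lemma \ref{lemma:momentestimatesadjacencymatrix} when indices coincide. The differences are in bookkeeping. For \eqref{eq:secondmomentestimateslowprobability} the paper does not use your (correct) observation that $I_1\equiv 0$ when two indices coincide; it simply drops the factor $1-I_1$ and notes $3-a\leq 2-\lfloor a/2\rfloor$ for $a\neq 0$, and your $a=0$ identity $A_{ij}A_{mn}(1-I_1)=A_{ij}A_{mn}(A_{in}+A_{mj}-A_{in}A_{mj})$ is exactly the paper's bound $\leq A_{ij}A_{mn}(A_{in}+A_{mj})$. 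For \eqref{eq:fourthmomentestimateslowprobability} the paper likewise reduces immediately to $b=0$ via $5-b\leq 4-\lfloor b/2\rfloor$, and then uses a single union bound $1-I_{12}\leq A_{\Sigma}$ on the event that the four edges are present, where $A_{\Sigma}$ is the sum of the four within-pair and eight cross-pair entries; your telescoping $1-I_{12}=(1-I_1)+I_1(1-I_2)+I_1I_2(1-J_{12})$ produces the same list of extra entries in three installments. This also shows the case enumeration you flag as the main obstacle is not really needed: for $b\geq 1$ every term of your expansion is already bounded by $\E[A_{ij}A_{mn}A_{k\ell}A_{pq}]=O(d_b/N)^{4-\lfloor b/2\rfloor}\leq O(d_b/N)^{5-b}$ by monotonicity of $0/1$ products and \eqref{eq:fourthmomentestimatesadjacencymatrix}, so only the all-distinct case requires the extra factor, exactly as in the paper. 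One small imprecision: $J_{12}$ cannot just encode vertex-disjointness (on the $b=0$ event that is automatic); as the paper's twelve-term $A_{\Sigma}$ reveals, its failure there is the presence of an edge joining the two switching pairs, which your phrase ``shared vertex or edge'' covers, so your mechanism of one extra factor $d_b/N$ per offending edge still goes through.
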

\begin{proof}
We first assume $a, b = 0$; for $a, b \neq 0$, the estimates follow directly from Lemma \ref{lemma:momentestimatesadjacencymatrix} and the following inequalities that hold for $a, b \neq 0$:
\begin{align}
3 - a \ \leq \ 2 - \left\lfloor \frac{a}{2} \right\rfloor, \ \ \ 5 - b \ \leq \ 4 - \left\lfloor \frac{b}{2} \right\rfloor.
\end{align}
If $a = 0$, then the event $A_{ij}A_{mn}I_1 = 0$ corresponds to the event that the subgraph restricted to the vertices $(i,j),(m,n)$ is not $1$-regular. In particular, we have the following bound:
\begin{align}
\E \left[A_{ij} A_{mn}(1 - I_1) \right] \ \leq \ \E \left[ A_{ij} A_{mn} \left( A_{in} + A_{mj} \right) \right] \ = \ O \left( \frac{d_b}{N} \right)^3,
\end{align}
where the last bound follows by the assumption $a = 0$ and Lemma \ref{lemma:pseudoindependenceadjacencymatrix}. Because the law of $A_{ij}$ is invariant under relabeling vertices, we deduce \eqref{eq:secondmomentestimateslowprobability}. 

Similarly, if $b = 0$, then the event $A_{ij}A_{mn}A_{k \ell}A_{pq}(1 - I_{12}) = 0$ corresponds to the event that the subgraph restricted to the vertices $(i,j), (m,n), (k, \ell), (p,q)$ is not bipartite, or if it is bipartite, the subgraphs are not $1$-regular. Formally, we have the estimate (conditioning on $b = 0$)
\begin{align}
\E \left[ A_{ij} A_{mn} A_{k \ell} A_{pq} (1 - I_{12}) \right] \ \leq \ \E \left[ A_{ij} A_{mn} A_{k \ell} A_{pq} A_{\Sigma} \right],
\end{align}
where we define
\begin{align}
A_{\Sigma} \ = \ A_{in} + A_{mj} + A_{kq} + A_{p \ell} + A_{i q} + A_{i \ell} + A_{m \ell} + A_{mq} + A_{kj} + A_{kn} + A_{pj} + A_{pn}.
\end{align}
Similarly, we deduce the following bound via Lemma \ref{lemma:pseudoindependenceadjacencymatrix}:
\begin{align}
\E \left[ A_{ij} A_{mn} A_{k \ell} A_{pq} (1 - I_{12}) \right] \ \leq \ O \left( \frac{d_b}{N} \right)^5.
\end{align}
Relabeling vertices and by the assumption $b = 0$, as in the proof of \eqref{eq:secondmomentestimateslowprobability} we deduce \eqref{eq:fourthmomentestimateslowprobability}.
\end{proof}
As a direct consequence of the estimates in Lemma \ref{lemma:momentestimateslowprobability}, we deduce the following averaged estimates.
\begin{lemma}
Suppose $\alpha, \beta$ are defined by the equations
\begin{align}
\# \{i,j\} \ = \ 2 - \alpha, \ \ \ \# \{i,j,k,\ell\} \ = \ 4 - \beta. 
\end{align}
Then, we have the following estimates:
\begin{align}
\frac{1}{N^2} \sum_{(m,n) \in V_b} \ \E \left[ (A_{ij} A_{mn} + A_{in} A_{mj})(1 - I_1) \right] \ &= \ O \left( \frac{d_b}{N} \right)^{3- \alpha}, \label{eq:secondmomentestimateslowprobabilityaverage} \\
\frac{1}{N^4} \sum_{(m,n) \in V_b} \sum_{(p,q) \in V_b} \ \E \left[ (A_{ij} A_{mn} + A_{in} A_{mj})(A_{k \ell} A_{pq} + A_{kq} A_{p \ell} )(1-I_{12})\right] \ &= \ O \left( \frac{d_b}{N} \right)^{5 - \beta}. \label{eq:fourthmomentestimateslowprobabilityaverage}
\end{align}
Moreover, we have
\begin{align}
\frac{1}{N^4} \sum_{(i,j), (m,n) \in V_b} \ \E \left[ (A_{ij} A_{mn} + A_{in} A_{mj})(1 - I_1) \right] \ &= \ O \left( \frac{d_b}{N} \right)^3, \label{eq:secondmomentestimateslowprobabilityaverageaverage} \\
\frac{1}{N^8} \sum_{(i,j), (m,n) \in V_b} \ \sum_{(k, \ell), (p,q) \in V_b} \ \E \left[ (A_{ij} A_{mn} + A_{in} A_{mj})(A_{k \ell} A_{pq} + A_{kq} A_{p \ell} )(1-I_{12})\right] \ &= \ O \left( \frac{d_b}{N} \right)^5. \label{eq:fourthmomentestimateslowprobabilityaverageaverage}. 
\end{align}
\end{lemma}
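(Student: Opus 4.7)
The plan is to deduce all four estimates as bookkeeping corollaries of the pointwise bounds in Lemma \ref{lemma:momentestimateslowprobability}, by decomposing each sum over the free pairs $(m,n)$ and $(p,q)$ according to how many of their entries coincide with the fixed indices $\{i,j\}$ or $\{i,j,k,\ell\}$. The underlying mechanism is that each additional coincidence between a free index and a fixed index (or among the free indices themselves) loses one factor of $N$ in the enumeration of the stratum but only improves the per-term bound by a factor of $d_b/N$. Since $d_b \gg 1$ in our regime $d_b \geq N^\gamma$, coincidences are strictly suboptimal and the dominant contribution comes from the stratum with no additional coincidences.

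Concretely, for \eqref{eq:secondmomentestimateslowprobabilityaverage}, I would fix $(i,j)$ with $\#\{i,j\} = 2 - \alpha$ and partition the sum over $(m,n) \in V_b$ according to the parameter $a = 4 - \#\{i,j,m,n\}$ appearing in Lemma \ref{lemma:momentestimateslowprobability}, noting $a \geq \alpha$. The number of $(m,n) \in V_b$ realizing a given value of $a$ is $O(N^{2-(a-\alpha)})$, and Lemma \ref{lemma:momentestimateslowprobability} supplies the pointwise bound $O(d_b/N)^{3-a}$ on each summand. Dividing by $N^2$, the contribution of the stratum is
\begin{align}
O(N^{-(a-\alpha)}) \cdot O(d_b/N)^{3-a} \ = \ O(d_b/N)^{3-\alpha} \cdot O(d_b^{-(a-\alpha)}),
\end{align}
which is maximized at $a = \alpha$ since $d_b \geq 1$, yielding the claimed $O(d_b/N)^{3-\alpha}$.

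The bound \eqref{eq:fourthmomentestimateslowprobabilityaverage} follows by the same argument applied to both free pairs simultaneously, parameterized by $b = 8 - \#\{i,j,k,\ell,m,n,p,q\}$: the stratum with parameter $b$ contains $O(N^{4-(b-\beta)})$ choices, the per-term bound is $O(d_b/N)^{5-b}$ by Lemma \ref{lemma:momentestimateslowprobability}, and dividing by $N^4$ gives $O(d_b/N)^{5-\beta} \cdot O(d_b^{-(b-\beta)})$, again maximized at $b = \beta$. The estimates \eqref{eq:secondmomentestimateslowprobabilityaverageaverage} and \eqref{eq:fourthmomentestimateslowprobabilityaverageaverage} then follow by averaging the previous two over the remaining fixed pairs; because $(i,j), (k,\ell) \in V_b$ forces the generic coincidence parameter to vanish, the extra summations contribute exactly the volume factors absorbed by the outer $1/N^4$ and $1/N^8$ normalizations, leaving the advertised $O(d_b/N)^3$ and $O(d_b/N)^5$.

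The main obstacle is the combinatorial bookkeeping of the strata inside $V_b$: one must verify that because $(i,j) \in V_b$ means $i \in [1,M]$ and $j \in [M+1,M+N]$, an index $m \in [1,M]$ can only coincide with a fixed index already in $[1,M]$, and similarly for $n$, so that the enumeration $O(N^{2-(a-\alpha)})$ and its four-pair analogue are sharp in every subcase. Beyond this combinatorial check, the argument is entirely driven by the quantitative gain from $d_b \geq N^\gamma$ that renders higher-coincidence strata strictly lower order.
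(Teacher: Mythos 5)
Your proposal is correct and follows essentially the same route as the paper: stratify the free sums over $(m,n)$ (and $(p,q)$) by the number of coincidences with the fixed indices, apply the pointwise bounds of Lemma \ref{lemma:momentestimateslowprobability} on each stratum, observe that each extra coincidence trades a factor $N^{-1}$ in the count against a factor $(d_b/N)^{-1}$ in the bound and hence costs $d_b^{-1} \leq 1$, and finally average over the fixed pairs to obtain the last two estimates. The only cosmetic caveat is that for \eqref{eq:fourthmomentestimateslowprobabilityaverageaverage} the constraint $(i,j),(k,\ell) \in V_b$ does not force $\beta = 0$, but the strata with $\beta > 0$ are subdominant by exactly the same counting you already used, matching the paper's treatment.
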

\begin{proof}
We first note the estimates \eqref{eq:secondmomentestimateslowprobabilityaverageaverage} and \eqref{eq:fourthmomentestimateslowprobabilityaverageaverage} follow from \eqref{eq:secondmomentestimateslowprobabilityaverage} and \eqref{eq:fourthmomentestimateslowprobabilityaverage}, respectively, noting there are $O(N^2)$ pairs $(i,j)$ such that $\alpha = 0$ and $O(N^4)$ pairs of pairs $\{(i,j), (k,\ell)\}$ such that $\beta = 0$.

More generally, for any $\alpha_0 \in [[0,1]]$ (resp. $\beta_0 \in [[0,3]]$), we note there are $O(N^{2 - s})$ (resp. $O(N^{4-s})$) sets $\{i,j\}$ (resp. $\{i,j,k,\ell\}$) such that $\alpha = \alpha_0$ (resp. $\beta = \beta_0$). Thus, by \eqref{eq:secondmomentestimateslowprobability} and \eqref{eq:fourthmomentestimateslowprobability}, we have 
\begin{align}
\frac{1}{N^2} \sum_{(m,n) \in V_b} \ \E \left[ (A_{ij} A_{mn} + A_{in} A_{mj})(1 - I_1) \right] \ &= \ O \left( \frac{d_b}{N} \right)^{3-\alpha} + \sum_{\alpha_0 = 1}^2 \ O(N^{-\alpha_0}) \times O \left( \frac{d_b}{N} \right)^{3-\alpha-\alpha_0} \\
&= \ O \left( \frac{d_b}{N} \right)^{3-\alpha}.
\end{align}
Thus, we derive \eqref{eq:secondmomentestimateslowprobabilityaverage}. Similarly, we may also deduce \eqref{eq:fourthmomentestimateslowprobabilityaverage}.
\end{proof}
We now recall the following seminorm for bounded measurable functions $f$: for $r \geq 1$, define 
\begin{align}
\| f \|_r \ := \ \left( \E |f(A)|^r \right)^{1/r}.
\end{align}
Extending the derivatives, we recall, for $f \in C^k(\mathscr{M}_{\mathbf{e}})$,
\begin{align}
\| \partial^k f \|_r \ := \ \sup_{\theta \in [0,1]^k} \sup_{X \in \mathscr{X}^k} \ \| \partial_{X_1} \ldots \partial_{X_k} f(A + \theta \cdot X) \|_r;
\end{align}
here, we use the notation $X = (X_1, \ldots, X_k) \in \mathscr{X}^k$. 

We now derive the following moment estimates coupled with functions of matrices; the bounds in the following lemma follow from the Holder inequality coupled with the the preceding moment estimates. These bounds will be important in bounding the error terms $R_1$ and $R_2$. 
\begin{lemma} \label{lemma:holderbounds}
Fix $\e > 0$ and suppose $r = r(\e) \gg 1$ is sufficiently large depending on the the parameter $\e$. Define the parameters $\alpha, \beta$ by the equations
\begin{align}
\# \{ i,j \} \ = \ 2 - \alpha, \ \# \{i,j,k,\ell\} \ = \ 4 - \beta.
\end{align}
For any $f \in C^0(\mathscr{M}_{\mathbf{e},0})$ with $\| f \|_r \leq 1$, we have the following estimates:
\begin{align}
\frac{1}{N^4} \sum_{(i,j),(m,n) \in V_b} \ \E \left[ f(A) A_{ij} A_{mn} \overline{I_1} \right] \ &= \  O \left( \frac{d_b}{N} \right)^{3 - \e}, \\
\frac{1}{N^4} \sum_{(i,j),(m,n) \in V_b} \ \E \left[ f(A) A_{ij} A_{mn} \right] \ &= \ O \left( \frac{d_b}{N} \right)^{2 - \lfloor \alpha/2 \rfloor - \e}, \label{eq:examplecalculation} \\
\frac{1}{Nd_w} \sum_{(m,n) \in V_b} \ \E \left[ f(A) \left( A_{ij} A_{mn} - A_{in} A_{mj} \right) \overline{I_1} \right] \ &= \ O \left( \frac{d_b}{N} \right)^{2 - \alpha - \varepsilon}, \\
\frac{1}{N d_w} \sum_{(m,n) \in V_b} \ \E \left[ f(A) A_{ij} A_{mn} \right] \ &= \ O \left( \frac{d_b}{N} \right)^{1 - \varepsilon}, \\
\frac{1}{(Nd_w)^2} \sum_{(m,n),(p,q) \in V_b} \ \E \left[ f(A) \left( A_{ij} A_{mn} - A_{in} A_{mj} \right) \left( A_{k\ell} A_{pq} - A_{kq} A_{p \ell} \right) \overline{I_{12}} \right] \ &= \ O \left( \frac{d_b}{N} \right)^{3 - \beta - \varepsilon}, \\
\frac{1}{(Nd_w)^2} \sum_{(m,n),(p,q) \in V_b} \ \E \left[ f(A) \left( A_{ij} A_{mn} - A_{in} A_{mj} \right) \left( A_{k\ell} A_{pq} - A_{kq} A_{p \ell} \right) \right] \ &= \ O \left( \frac{d_b}{N} \right)^{4 - \lfloor \beta/2 \rfloor - \e}.
\end{align}
Here, for an indicator random variable $\chi$ corresponding to an event $\Xi$, we denote by $\overline{\chi} = 1 - \chi$ the indicator random variable of the complement of $\Xi$.
\end{lemma}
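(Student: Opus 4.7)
The plan is to derive each of the six estimates by a single application of Hölder's inequality, reducing the problem to the unconditional moment bounds already established in Lemmas \ref{lemma:pseudoindependenceadjacencymatrix}--\ref{lemma:momentestimateslowprobability} and their averaged consequences. Concretely, let $r' = r/(r-1)$ be the Hölder conjugate of $r$; then
\begin{equation*}
\bigl|\E[f(A)P(A)]\bigr| \ \leq \ \|f\|_r \, \|P\|_{r'} \ \leq \ \|P\|_{r'},
\end{equation*}
where $P(A)$ denotes whichever polynomial in adjacency entries (possibly multiplied by an indicator $\overline{I_1}$ or $\overline{I_{12}}$) appears alongside $f$ in the six integrands.

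Next, we exploit the fact that adjacency entries, as well as the indicators $\overline{I_1}, \overline{I_{12}}$, are $\{0,1\}$-valued, so any positive power of such a factor equals the factor itself. For the $\overline{I_1}$ and unsigned estimates, $P$ is already a nonnegative product of such factors, hence $|P|^{r'}=P$. For the signed combinations $A_{ij}A_{mn}-A_{in}A_{mj}$ (and analogously at the fourth-order level) we use the pointwise bound $|A_{ij}A_{mn}-A_{in}A_{mj}|^{r'} \leq A_{ij}A_{mn}+A_{in}A_{mj}$, which is valid because each of the two $\{0,1\}$-valued summands is nonnegative. In every case we are reduced to an inequality of the form $\|P\|_{r'} \leq (\E Q)^{1/r'}$ for an explicit nonnegative polynomial $Q$ whose expectation is controlled by a prior moment bound.

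Third, we substitute the appropriate moment estimate and perform the remaining index summation. For the indicator estimates we invoke \eqref{eq:secondmomentestimateslowprobability}, \eqref{eq:secondmomentestimateslowprobabilityaverage}, \eqref{eq:fourthmomentestimateslowprobability}, or \eqref{eq:fourthmomentestimateslowprobabilityaverage} as appropriate; for the estimates without an indicator we use the unconditional bounds in Lemma \ref{lemma:momentestimatesadjacencymatrix}, stratifying the sum by the index-coincidence parameters $\alpha$ and $\beta$ exactly as in the proof of the averaged consequences (and using $d_w = \alpha d_b \asymp d_b$ to match the $1/(Nd_w)$ and $1/(Nd_w)^2$ normalizations). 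This produces an integer-exponent bound of the form $(d_b/N)^k$ matching each target before the $\varepsilon$-loss.

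The last ingredient, and the only quantitative role of $r$, is extracting the $r'$-th root: $(d_b/N)^k$ becomes $(d_b/N)^{k/r'}$, and since $d_b/N \leq 1$ and $d_b/N \geq N^{-1+\gamma}$, this is bounded by $(d_b/N)^{k-\varepsilon}$ provided we take $r=r(\varepsilon)$ large enough that $k/r < \varepsilon$. The main obstacle is bookkeeping rather than conceptual: one must verify that the target exponents $3-\varepsilon$, $2-\lfloor\alpha/2\rfloor-\varepsilon$, $2-\alpha-\varepsilon$, $1-\varepsilon$, $3-\beta-\varepsilon$, and $4-\lfloor\beta/2\rfloor-\varepsilon$ match the moment bound delivered by the correct predecessor lemma after the normalization and the index-cardinality count, and that in each signed-bilinear case the $|A_{ij}A_{mn}-A_{in}A_{mj}|^{r'}$ upper bound is compatible with (and does not waste decay against) the expected exponent. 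No single step is subtle, but the six cases must each be matched to the correct prior estimate with care.
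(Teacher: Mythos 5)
Your proposal is correct and is essentially the paper's own argument: Hölder in the expectation against $\|f\|_r \leq 1$, reduction to the moment bounds of Lemmas \ref{lemma:pseudoindependenceadjacencymatrix}--\ref{lemma:momentestimateslowprobability} using that the entries and indicators are $\{0,1\}$-valued so the $r'$-th power is harmless, and absorption of the resulting $O(1/r)$ exponent loss into $\e$ by taking $r(\e)$ large. The only cosmetic difference is that the paper applies a second Hölder inequality over the index summation before invoking the moment estimates, whereas you bound each summand pointwise and then sum, stratifying by the index-coincidence parameters; both routes deliver the same exponents.
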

\begin{proof}
We prove \eqref{eq:examplecalculation}; the other bounds follow analogously. By the Holder inequality with respect to the expectation $\E(\cdot)$, we see
\begin{align}
\E \left[ f(A) A_{ij} A_{mn} \right] \ \leq \ \|f\|_r \left( \E \left[ A_{ij} A_{mn} \right] \right)^{1/q} \ \leq \ \left( \E A_{ij} A_{mn} \right)^{1/q},
\end{align}
where $q^{-1} = 1 - r^{-1}$. Again by the Holder inequality with respect to the summation over indices $(i,j), (m,n) \in V_b$, we see
\begin{align}
\frac{1}{N^4} \sum_{(i,j),(m,n)} \ \left( \E A_{ij} A_{mn} \right)^{1/q} \ &\leq \ \frac{1}{N^4} (MN)^{2r} \left( \sum_{(i,j),(m,n) \in V_b} \ \E A_{ij} A_{mn} \right)^{1/q} \\
&= \ \left( \sum_{(i,j),(m,n) \in V_b} \ \frac{1}{N^4} \E A_{ij} A_{mn} \right)^{1 - r^{-1}}.
\end{align}
By Lemma \ref{lemma:momentestimatesadjacencymatrix}, this upper bound is also bounded by the following:
\begin{align}
O \left( \frac{d_b}{N} \right)^{2 - \lfloor \alpha/2 \rfloor - C(\alpha) r^{-1}} \ = \ O \left( \frac{d_b}{N} \right)^{2 - \lfloor \alpha/2 \rfloor - \e},
\end{align}
where the equality holds by choosing $r(\e) \gg 1$ and the trivial bound $\alpha = O(1)$. This proves \eqref{eq:examplecalculation}. 
\end{proof}
We now prove the last estimate bounding expectations of function $f: \Omega \to \R$ with coefficients given by the fluctuation terms $A_{ij} - d_b/N$ in the expansions \eqref{eq:efexpansionij} and \eqref{eq:efexpansionmn}. These, along with Lemma \ref{lemma:holderbounds}, will be important in bounding the error terms $R_3, \ldots, R_5$. 
\begin{prop} \label{prop:efestimates}
Fix any $\e > 0$, and suppose $r = r(\e) \gg_\e 1$ is sufficiently large depending on $\e$. Moreover, define $\alpha, \beta$ by the equations
\begin{align}
\# \left\{ i, j \right\} \ = \ 2 - \alpha , \ \# \{ i, j, m, n \} \ = \ 4 - \beta.
\end{align}
For any $f \in C^0(\mathscr{M}_{\mathbf{e},0})$, we have
\begin{align}
\E \left[ f(A) \left( A_{ij} - \frac{d_b}{N} \right) \right] \ &= \ O \left( \frac{d_b}{N} \right)^{1 - \varepsilon} \| \partial f \|_r \ + \ O \left( \frac{d_b}{N} \right)^{2 - \alpha - \varepsilon} \| f \|_r, \label{eq:secondorderef} \\
\E \left[ f(A) \left( A_{ij} - \frac{d_b}{N} \right) \left( A_{mn} - \frac{d_b}{N} \right) \right] \ &= \ O \left( \frac{d_b}{N} \right)^{2 - \e} \| \partial^2 f \|_r \ + \ O \left( \frac{d_b}{N} \right)^{3 - \beta - \e} \| f \|_r. \label{eq:lowestorderef}
\end{align}
\end{prop}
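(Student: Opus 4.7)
The plan is to exploit the switching-invariance of the uniform measure $\mu_{\on{unif}}$ on $\Omega$ (Proposition \ref{prop:invariantuniformgeneratorvanishes}) to carry out a discrete integration by parts, trading the centered quantity $A_{ij} - d_b/N$ for a derivative of $f$ along a switching direction. Since switching is a $\mu_{\on{unif}}$-preserving bijection between the event $\{I_{ij}^{mn}(A) = 1\}$ and the event $\{I_{in}^{mj}(A) = 1\}$, we have for any function $g$
\begin{align*}
\E\bigl[g(A)\,I_{ij}^{mn}(A)\bigr] \ = \ \E\bigl[g(A + \xi_{ij}^{mn})\,I_{in}^{mj}(A)\bigr],
\end{align*}
which, after Taylor-expanding $g$ in the direction $\xi_{ij}^{mn} \in \mathscr{X}$, produces $\partial_{ij}^{mn} g$ plus controlled remainders involving $\partial^2 g$ and higher.

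For \eqref{eq:secondorderef}, I would first use the row-sum constraint $\sum_{n} A_{in} = d_b$ (and its column analog $\sum_{m} A_{mn} = d_w$) to write the centered fluctuation as the average
\begin{align*}
A_{ij} - \frac{d_b}{N} \ = \ \frac{1}{N}\sum_{n}(A_{ij} - A_{in}),
\end{align*}
and then apply the switching identity above to each summand after multiplying by the switching-counting factor $A_{mn}$ and averaging over $(m,n) \in V_b$. The switchable part generates $\partial_{ij}^{mn} f$ weighted by the second-moment factor $A_{ij}A_{mn}(1-A_{in})(1-A_{mj})$; combining Lemma \ref{lemma:momentestimatesadjacencymatrix} with the H\"older argument of Lemma \ref{lemma:holderbounds} yields the $(d_b/N)^{1-\varepsilon}\|\partial f\|_r$ contribution. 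The non-switchable residual, carrying the indicator $1 - I_{ij}^{mn}(A)$, is bounded using the averaged estimate \eqref{eq:secondmomentestimateslowprobabilityaverage} of Lemma \ref{lemma:momentestimateslowprobability} to produce the $(d_b/N)^{2-\alpha-\varepsilon}\|f\|_r$ contribution.

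For \eqref{eq:lowestorderef}, I would iterate by applying \eqref{eq:secondorderef} to the function $\tilde f(A) := f(A)(A_{mn} - d_b/N)$. Expanding the derivative by the product rule gives $\partial \tilde f = (\partial f)(A_{mn} - d_b/N) + f\cdot \partial A_{mn}$; the first piece is handled by a second application of \eqref{eq:secondorderef} with $f \rightsquigarrow \partial f$, producing $\|\partial^2 f\|_r$ with the sharpened power $(d_b/N)^{2-\varepsilon}$, while the second piece — whose $A_{mn}$-derivatives along directions in $\mathscr{X}$ are bounded — contributes only to the $\|f\|_r$ term, with its power of $d_b/N$ controlled by the four-index averaged estimate \eqref{eq:fourthmomentestimateslowprobabilityaverage}, giving the claimed $(d_b/N)^{3 - \beta - \varepsilon}$.

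The main obstacle will be the combinatorial bookkeeping in the iteration for \eqref{eq:lowestorderef}: the product-rule cross terms, together with the index-coincidence patterns encoded by $\alpha$ and $\beta$, have to be matched to the correct application of Lemma \ref{lemma:momentestimateslowprobability} and Lemma \ref{lemma:holderbounds}, and in particular one must verify that whenever $\{i,j\}\cap\{m,n\}\neq \emptyset$ (so $\beta > 0$) the stricter fluctuation-moment power compensates for the weaker bound on $\partial\tilde f$ inherited from the first application. A secondary subtlety is the H\"older $\varepsilon$-loss: since it appears twice in the nested estimate, one must take $r = r(\varepsilon)$ large enough that the cumulative loss still fits inside the single $\varepsilon$ advertised in the statement.
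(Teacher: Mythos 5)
Your treatment of \eqref{eq:secondorderef} is essentially the paper's argument: rewrite the centered entry as an average of switchable pair differences using the row-sum constraints, split with the switchability indicator, use the measure-preserving switching to convert the switchable part into a first-order finite difference $f(A)-f(A-\xi_{ij}^{mn})$ bounded by $\|\partial f\|_r$, and control the complement with the low-probability moment bounds through the H\"older machinery of Lemma \ref{lemma:holderbounds}. That part is fine.

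The gap is in your iteration for \eqref{eq:lowestorderef}. Estimate \eqref{eq:secondorderef} bounds expectations of the form $\E[g(A)(A_{ij}-d_b/N)]$, and its error terms are \emph{norms} $\|\partial g\|_r$, $\|g\|_r$; once you apply it as a black box to $\tilde f(A)=f(A)(A_{mn}-d_b/N)$, the centering of the second factor is lost inside those norms and cannot be recovered by a "second application". Concretely, the cross term $f\cdot\partial_X A_{mn}$ sits inside $\|\partial\tilde f\|_r$ with $\partial_X A_{mn}=O(1)$ (and the norm takes a supremum over all $X\in\mathscr{X}$, so locality of the switching gives no gain), producing a contribution of size $(d_b/N)^{1-\e}\|f\|_r$; in the generic case $\beta=0$ the claim requires $(d_b/N)^{3-\e}\|f\|_r$, so you are short two full powers of $d_b/N$. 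Likewise, the piece $(\partial f)(A_{mn}-d_b/N)$ is an $L^r$ norm of a product, not an expectation against a single centered factor, so \eqref{eq:secondorderef} does not apply to it, and H\"older only yields a $(d_b/N)^{O(1/r)}$ gain rather than the full factor $d_b/N$ needed for $(d_b/N)^{2-\e}\|\partial^2 f\|_r$; your appeal to \eqref{eq:fourthmomentestimateslowprobabilityaverage} has no way to enter once the first application has been made. The paper avoids this by handling both centered factors \emph{simultaneously}: each is represented by its switching sum, the product is split with the joint indicator $I_{12}$, the complement is bounded by the four-index estimates, and a double switching produces the genuine second-order difference $f(A)-f(A-\xi_{ij}^{mn})-f(A-\xi_{k\ell}^{pq})+f(A-\xi_{k\ell}^{pq}-\xi_{ij}^{mn})$, which is what is bounded by $\|\partial^2 f\|_r$. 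If you want to keep an iterative formulation, you would have to prove a strengthened version of \eqref{eq:secondorderef} whose error terms retain the explicit correlation with the remaining centered factor, which amounts to redoing the simultaneous computation.
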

\begin{proof}
We employ the following identity, which follows from the averaging $\sum_{m,n} A_{mn} = 2Nd_w$, $\sum_n A_{in} = d_b$ and $\sum_m A_{mj} = d_w$. 
\begin{align}
f(A) \left( A_{ij} - \frac{d_b}{N} \right) \ &= \ \frac{1}{2Nd_w} f(A) \sum_{(m,n) \in V_b} \ \left( A_{ij} A_{mn} - A_{in} A_{mj} \right).
\end{align}
In particular, writing $1 = I_1 + \overline{I_1}$, we have, by Lemma \ref{lemma:holderbounds},
\begin{align}
\E \left[ f(A) \left( A_{ij} - \frac{d_b}{N} \right) \right] \ &= \ \frac{1}{2Nd_w} \E \left[ f(A) \sum_{(m,n) \in V_b} \left( A_{ij} A_{mn} - A_{in} A_{mj} \right) I_1 \right] \ + \ O \left( \frac{d_b}{N} \right)^{2 - \alpha - \e} \|f \|_r.
\end{align}
Thus, it suffices to bound the first average containing the factor $I_1$. We first note we may assume $\alpha = 0$ else the first average vanishes. Thus, we see $I_1(A) = I_1(T_S(A))$ with $S$ the subgraph whose edges are given by $\{ ij \}$ and $\{ mn \}$. With this and the invariance of the uniform measure under $T_S(A)$, we see
\begin{align}
\E \left[ f(A) \left( A_{ij} - \frac{d_b}{N} \right) \right] \ = \ \frac{1}{2N d_w} \sum_{(m,n) \in V_b} \ \E \left[ \left( f(A) - f(A - \xi_{ij}^{mn}) \right) A_{ij}A_{mn} I_1 \right].
\end{align}
With the following Taylor estimate:
\begin{align}
\left| f(A) - f(A - \xi_{ij}^{mn}) \right| \ \leq \ \sup_{\theta \in [0,1]} \sup_{X \in \mathscr{X}} \ \left| \partial_X f(A + \theta X) \right|,
\end{align}
as well as Lemma \ref{lemma:holderbounds}, we deduce \eqref{eq:secondorderef}. To prove \eqref{eq:lowestorderef}, we appeal to the identity
\begin{align}
f(A) \left( A_{ij} - \frac{d_b}{N} \right) \left( A_{k \ell} - \frac{d_b}{N} \right) \ = \ \frac{1}{(2Nd_w)^2} f(A) \sum_{(m,n), (p,q) \in V_b} \ \left( A_{ij} A_{mn} - A_{in} A_{mj} \right) \left( A_{k \ell} A_{pq} - A_{kq} A_{p \ell} \right).
\end{align}
Writing $1 = I_{12} + \overline{I}_{12}$, we have, by Lemma \ref{lemma:holderbounds},
\begin{align}
\E \left[ f(A) \left( A_{ij} - \frac{d_b}{N} \right) \left( A_{k \ell} - \frac{d_b}{N} \right) \right] \ = \ &\frac{1}{(2Nd_w)^2} \sum_{(m,n), (p,q) \in V_b}  \E \left[ \wt{G}_{ij,mn,k\ell,pq} I_{12} \right] \ + \ O \left( \frac{d_b}{N} \right)^{3 - \beta - \varepsilon} \| f \|_r,
\end{align}
where we define
\begin{align}
\wt{G}_{ij,mn,k\ell,pq} \ = \ f(A) \left( A_{ij} A_{mn} - A_{in} A_{mj} \right) \left( A_{k \ell} A_{pq} - A_{kq} A_{p \ell} \right).
\end{align}
Similarly, we may assume $\beta = 0$, else the first averaging term containing the factor $I_{12}$ vanishes. Again, appealing to the invariance of the uniform measure under $T_S(A)$ and the identities $I_{12}(A) = I_{12}(T_{S_1, S_2}(A))$ with $S_1, S_2$ the subgraphs containing the vertices $\{ ij, mn \}$ and $\{k \ell, pq\}$ respectively, we see
\begin{align}
\E \left[ f(A) \left( A_{ij} - \frac{d_b}{N} \right) \left( A_{k \ell} - \frac{d_b}{N} \right) \right] \ = \ \frac{1}{(2 N d_w)^2} \sum_{(m,n), (p,q) \in V_b} \ \E \left[ G_{ij,mn,k\ell,pq}(A) A_{ij}A_{mn} I_{12} \right],
\end{align}
where we define
\begin{align}
G_{ij,mn,k\ell,pq}(A) \ := \ f(A) - f(A - \xi_{ij}^{mn}) - f(A - \xi_{k \ell}^{pq}) + f(A - \xi_{k \ell}^{pq} - \xi_{ij}^{mn}).
\end{align}
Appealing to the Taylor estimate
\begin{align}
\left| G_{ij,mn,k\ell,pq}(A) \right| \ \leq \ \sup_{\theta_1, \theta_2 \in [0,1]} \sup_{X_1, X_2 \in \mathscr{X}} \ \left| \partial_{X_1} \partial_{X_2} f(A + \theta X_1 + \theta X_2) \right|,
\end{align}
as well as Lemma \ref{lemma:holderbounds}, we also deduce \eqref{eq:lowestorderef}.
\end{proof}
We now prove Proposition \ref{prop:errorestimatessobolev}. By Lemma \ref{lemma:holderbounds} and Proposition \ref{prop:efestimates}, we have
\begin{align}
\E R_1 \ &= \ O \left( \frac{d_b}{N} \right)^{2 - \e} \| \partial f \|_r, \\
\E R_2 \ &= \ O \left( \frac{d_b}{N} \right)^{1 - \e} \| \partial^3 f \|_r, \\
\E R_3 \ &= \ O \left( \frac{d_b}{N} \right)^{1 - \e} \| \partial^3 f \|_r, \\
\E R_4 \ &= \ O \left( \frac{d_b}{N} \right)^{2 - \e} \| \partial^2 f \|_r \ + \ O \left( \frac{d_b}{N} \right)^{1 - \e} \| \partial^3 f \|_r, \\
\E R_5 \ &= \ O \left( \frac{d_b}{N} \right)^{2 - \e} \| \partial^2 f \|_r \ + \ O \left( \frac{d_b}{N} \right)^{1 - \e} \| \partial^4 f \|_r.
\end{align}
Thus, by definition of the parameter $D$ and the change of variables $f(A) = F(H)$ with 
\begin{align}
\partial^k f \ = \ d_b^{-k/2} \partial^k F,
\end{align}
we have
\begin{align}
\sum_{i = 1}^5 \ \E R_i \ = \ O \left( D^{-1/2} N^{-1 + \e} \right) \sum_{i = 1}^4 \ \| \partial^i F \|_{r(\e),0}.
\end{align}
This concludes the proof of Proposition \ref{prop:errorestimatessobolev}. \QED
%
%
%


\end{document}